\newcommand{\p}{\partial}
\newcommand{\la}{\langle}
\newcommand{\ra}{\rangle}
\newcommand{\eae}{\zeta}
\newcommand{\eps}{\varepsilon}
\newcommand{\be}{\begin{equation}}
\newcommand{\ba}{\begin{aligned}}
\newcommand{\bee}{\begin{equation*}}
\newcommand{\ee}{\end{equation}}
\newcommand{\ea}{\end{aligned}}
\newcommand{\eee}{\end{equation*}}
\newcommand{\bea}{\begin{equation} \begin{aligned} }
\newcommand{\eea}{\end{aligned}\end{equation} }
\newcommand{\sgamma}{\bar \gamma}
\theoremstyle{plain}
\newtheorem{theorem}{Theorem}[section]
\newtheorem{corollary}[theorem]{Corollary}
\newtheorem{lemma}[theorem]{Lemma}
\newtheorem{proposition}[theorem]{Proposition}
\newtheorem{prop}[theorem]{Proposition}
\newtheorem{claim}{Claim}[section]
\newtheorem{remark}[theorem]{Remark}
\theoremstyle{definition}
\newtheorem{definition}[theorem]{Definition}
\numberwithin{equation}{section}
\title{continuous family of surfaces translating by powers of Gauss curvature}
\author{Beomjun Choi}
\address{BC: Department of Mathematics, POSTECH, 77 Cheongam-ro, Nam-gu, Pohang, Gyeongbuk 37673, Republic of Korea}
\email{bchoi@postech.ac.kr}
\author{Kyeongsu Choi}
\address{KC: School of Mathematics, Korea Institute for Advanced Study, 85 Hoegiro, Dongdaemun-gu, Seoul 02455, Republic of Korea}
\email{choiks@kias.re.kr}
\author{Soojung Kim}
\address{SK: Department of Mathematics, Soongsil University,  Seoul 06978,    Republic of Korea}
\email{soojungkim@ssu.ac.kr}
\date{}
\begin{document}

\begin{abstract}
This paper shows the existence of convex translating surfaces under the flow by the $\alpha$-th power of Gauss curvature for the sub-affine-critical regime $ 0 < \alpha < 1/4$. The key aspect of our study is that our ansatz at infinity is the graph of homogeneous functions whose level sets are closed curves shrinking under the flow by the $\frac{\alpha}{1-\alpha}$-th power of curvature. For each ansatz, we construct a family of translating surfaces generated by the Jacobi fields with effective growth rates. Moreover, the construction shows quantitative estimate on the rate of convergence between different translators to each other, which is required to show the continuity of the family. As a result, the family is regarded as a topological manifold. The construction in this paper will become the ground of forthcoming research, where we aim to prove that every translating surface must correspond to one of the solutions  obtained herein,   classifying translating surfaces and identifying the topology of the moduli space.
\end{abstract}
 
\maketitle 
\tableofcontents

\section{Introduction}

The $\alpha$-Gauss curvature flow ($\alpha$-GCF) is the evolution of complete convex hypersurfaces under the speed of $\alpha$-th power of Gaussian curvature in the inner-pointing\footnote{The convex hull of the image surface $\Sigma_t$ is the inside region when $\Sigma_t$ is non-compact.} normal direction.
 A convex hypersurface $\Sigma$ is a translating soliton (or simply a translator) under the $\alpha$-GCF if $\Sigma_t= \Sigma+vt $ is an $\alpha$-GCF for some $v\in \mathbb{R}^3$. The translator $\Sigma$ satisfies the equation \be \label{eq-translator1}  K ^\alpha = \la -\nu, v \ra ,\ee 
 at every point $X$ on the hypersurface $\Sigma$.  Here, $K$ and $\nu$ denote the Gauss curvature and the unit outward-pointing  normal vector  at  $X$, respectively. 
 Throughout this paper, let us fix $v=\mathbf{e}_3$. If a translator is an entire smooth   convex graph $x_3=u(x_1,x_2)$, the translator equation for $u(x)$ becomes  \be \label{eq-translatorgraph}\det D^2 u = (1+|Du|^2)^{2 -\frac{1}{2\alpha}}.\ee

The main result of this paper addresses the existence of smooth entire convex graphical translators for sub-affine-critical powers $\alpha <1/4$. Note that not all translators in all ranges of $\alpha>0$ are entire strictly convex graphs \cite{Urbas1988, U98GCFsoliton}.  In forthcoming paper(s), we will show that every complete convex hypersurface that satisfies the translator equation \eqref{eq-translator1} in a generalized sense, for some $\alpha\in(0,1/4)$, has to be the graph of entire smooth strictly convex function. 

The equation for translating solitons under the $\alpha$-GCF can be written for higher dimensional space. In $\mathbb{R}^{n+1}$, a graphical equation for  $x_{n+1}=u(x)$ is \begin{equation}\label{eq:u_in_R^n}
\det D^2 u=(1+|Du|^2)^{\frac{n+2}{2}-\frac{1}{2\alpha}}.
\end{equation} 
The existence and classification for solutions to Monge-Ampere equations have been studied for decades and the most promising result is established for the affine-critical case $\alpha=\frac{1}{n+2}$, i.e., the equation with a  homogeneous constant right-hand side $\det D^2 u = 1$  on $\mathbb{R}^n$ for all dimensions $n\ge1$. By the celebrated result of J\"orgens, Calabi, and Porgorelov, the entire convex solutions are quadratic polynomials \cite{nitsche1956elementary, jorgens1954losungen, calabi1958improper, cheng1986complete, caffarelli1995topics}. 
  There are extensions of this result by Caffarelli--Y. Li \cite{caffarelli2003extension} to the case    when the right-hand side is constant outside of a compact set, and by Y. Li--S. Lu  \cite{caffarelli2004liouville,li2019monge} to the case  with the periodic    right-hand side.

Urbas \cite{Urbas1988, U98GCFsoliton} showed that if $\alpha>\frac{1}{2}$, each translator has to be a graph over a bounded convex domain $\Omega \subset \mathbb{R}^{n}$. Conversely, given a convex open bounded   domain $\Omega \subset \mathbb{R}^n$ and $\alpha>\frac{1}{2}$, there exists a unique translator $\Sigma$ under the $\alpha$-GCF which is asymptotic to $\partial\Omega \times \mathbb{R}$. Moreover, he proved that $\Sigma$ is strictly convex and smooth in $\Omega\times \mathbb{R}$. In general, the translators attract nearby non-compact flows; see \cite{choi2018convergence, choi2019convergence}. Moreover, the translator is the only non-compact ancient solution in each $\Omega\times \mathbb{R}$; see \cite{choi2020uniqueness}.

For $\alpha\leq \frac{1}{2}$ and $\alpha \neq \frac{1}{n+2}$, to our knowledge, no complete classification is known. In \cite{JW}, H. Jian--X.J. Wang    constructed infinitely many solutions for $\alpha\leq \frac{1}{2}$, and also showed that  $\Omega$ should be the entire space $\mathbb{R}^n$ for $\alpha <\frac{1}{n+1}$. To be specific, they showed that given an ellipsoid $E\subset\mathbb{R}^n$,  there exists a solution $u$  satisfying $u(0)=0$ and $Du(0)=0$, such that $E$ is homothetic to the John ellipsoid of the level set $\{x\in \mathbb{R}^n:u^*(x)=1\}$, where $u^*$ is the Legendre transformation of $u$; c.f. K.S. Chou--X.J. Wang \cite{chou1996entire}.

\bigskip

The asymptotic behavior of  translators (near infinity) is closely tied with the blow-down equation 
\begin{equation}\label{eq-asymptranslatorgraph}
\det D^2 \bar u = |D\bar u|^{4-\frac1{\alpha}}.
\end{equation}
More precisely, we construct translators that are asymptotic to self-similar  solutions to the blow-down equation having a form $\bar u(rx)= r^d \bar u(x)$ for some $d>1$.  By a direct computation, the degree $d$ must be $(1-2\alpha)^{-1}$,  i.e., a self-similar solution can be written as $\bar u (x)= |x|^{\frac{1}{1-2\alpha}} g(\theta)$ for some $g(\theta)$, where $\theta$ denotes the angle between the positive $x_1$-axis and $x=(x_1,x_2)$. Let us also call self-similar solutions by homogeneous solutions. This prompts us to define the following notion.
\begin{definition} [Blow-down] \label{def-blowdown} We say that the blow-down of translator $x_3=u(x)$ is $\bar u(x)$ if $u_\lambda (x):= \lambda ^{-\frac{1}{1-2\alpha}} u(\lambda x)$ converges locally uniformly to $\bar u(x)$ as $\lambda \to \infty$. If $\bar u $ is the rotationally symmetric homogeneous solution to \eqref{eq-asymptranslatorgraph}, then the blow-down is called rotationally symmetric. If $\bar u$ is a (non-radially symmetric) $k$-fold symmetric homogeneous solution to \eqref{eq-asymptranslatorgraph}, then the blow-down is called $k$-fold symmetric.   See Definition \ref{def:k-fold}  for the definition of $k$-fold symmetry.
\end{definition}
 Let $\bar u$ be the blow-down of $u$. Then $u$ has the asymptotics   $|u(x)- \bar u(x)|=o ( |x|^{\frac{1}{1-2\alpha}})$ as $x\to \infty$. By a formal computation, one easily checks that the blow-down $\bar u$ of the translator $x_3=u(x)$ has to be a solution to the blow-down equation if $u_\lambda$ converges smoothly to $\bar u$.

  The crucial observation is that the level curve of homogeneous blow-down $\bar u$  must be a shrinking soliton (or simply shrinker) under the curve shortening flow of speed $\kappa^\frac{\alpha}{1-\alpha}$; see Remark \ref{remark-shrinker}.  Different ranges of $\alpha$ can accommodate different types of shrinkers. There has been the classification of shrinkers by B. Andrews \cite{andrews2003classification}, which is re-stated in our Theorem \ref{thm-andrews2003}.  See also \cite[Section 5]{DSavin} by Daskalopoulos-Savin, where homogeneous solutions to the dual equation $\det D^2 u = |x|^{\frac1\alpha-4}$ have been studied for $0<\alpha<1/4$ and $1/4<\alpha  <1/2$. 
Moreover, we should mention that the dimension of effective Jacobi fields to the blow-down equation, generating a family of translators asymptotic to a given blow-down, varies  with respect to $\alpha$ and the limit shrinker. Now we state the main existence theorem. See also Theorem \ref{thm-existence} for the precise version, stated after required preliminaries are introduced.
\begin{theorem}[Existence]\label{thm:classification-1} For $\alpha\in(0, 1/4)$, let  $m:=\lceil \alpha^{-1/2}\rceil -1 $.\footnote{Recalling the definition $\lceil x\rceil :=\min \{n\in \mathbb{Z}\,:\, x\le n \}$, $m$ is the unique integer such that $\frac{1}{(m+1)^2}\le \alpha < \frac{1}{m^2}$.  
} There exists a  $(2m+1)$-parameter family of smooth translators under the $\alpha$-Gauss curvature flow  whose blow-downs are the rotationally symmetric homogeneous solution to \eqref{eq-asymptranslatorgraph}. In addition, for each integer $k\in [3,m]$ (which necessarily implies $\alpha<1/9$), there exists a ${(2k-1)}$-parameter family of smooth translators whose blow-downs are a given $k$-fold symmetric homogeneous solution to \eqref{eq-asymptranslatorgraph}. {Moreover, the parametrization is continuous in the topology of locally uniform convergence of surfaces.}
\end{theorem}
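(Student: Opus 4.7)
The plan is to realize each translator as a nonlinear perturbation of a fixed homogeneous blow-down $\bar u$ solving \eqref{eq-asymptranslatorgraph}, parametrized by a finite-dimensional space of effective Jacobi fields of the linearization. Writing $u=\bar u+w$, equation \eqref{eq-translatorgraph} rearranges to
\[ L_{\bar u}\,w \;=\; \bigl[\,(1+|Du|^2)^{2-\frac{1}{2\alpha}} - |D\bar u|^{4-1/\alpha}\,\bigr] \;-\; N(w),\]
where $L_{\bar u}$ is the cofactor-matrix linearization of the Monge--Amp\`ere operator at $\bar u$ and $N(w)$ collects the quadratic and higher-order terms in the expansion of $\det D^2(\bar u + w)$. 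The bracket is a bounded forcing that is strictly subleading compared to $|D^2 \bar u|\cdot w \sim L_{\bar u}w$ for $w$ of Jacobi-scale size, so at leading order the problem is linear.

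\textbf{Spectral analysis.} Since $\bar u$ is homogeneous of degree $d=(1-2\alpha)^{-1}$, its cofactor matrix is homogeneous of degree $d-2$ and $L_{\bar u}$ is Euler-scale invariant. In coordinates $(r,\theta)$ adapted to the level curves of $\bar u$ --- which, up to dilation, are the shrinker curves under $\kappa^{\alpha/(1-\alpha)}$-curve shortening flow classified in Theorem~\ref{thm-andrews2003} --- separation of variables reduces $L_{\bar u}$ to a radial Euler equation plus a Sturm--Liouville problem on the shrinker curve, and each angular eigenvalue produces two homogeneous Jacobi fields $r^{\gamma^\pm}\psi(\theta)$. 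Call an angular mode \emph{effective} if $\gamma^+<d$. For the rotationally symmetric blow-down the angular eigenfunctions are pure Fourier modes, and a direct indicial calculation yields $\gamma^+(\ell)<d \iff \ell^2<1/\alpha$, i.e.\ $|\ell|\le m$, giving total dimension $2m+1$ (of which three modes are the trivial translations in $x_1,x_2,x_3$, and $2m-2$ are shape-deforming). For a $k$-fold symmetric blow-down the angular eigenvalue problem is no longer explicit, but decomposition by $\mathbb{Z}_k$-characters, together with Andrews' rigidity for shrinker curves and variational comparison against the round model, reduces the effective spectrum to dimension $2k-1$.

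\textbf{Construction and continuity.} Choose a weight exponent $\beta$ lying in a gap of the indicial spectrum, just above the largest effective $\gamma^+$ and below $d$, and work in a weighted H\"older space $C^{2,\sigma}_\beta$ in which scale-invariant Fredholm theory gives $L_{\bar u}$ a bounded right inverse modulo the effective Jacobi-field space. For each prescribed effective Jacobi field $w_0$, the map $\Phi(w)=L_{\bar u}^{-1}\bigl(\text{forcing}-N(w_0+w)\bigr)$ is a contraction on a small ball around $0$; the fixed point yields the desired translator $u=\bar u+w_0+w$, and interior Monge--Amp\`ere regularity upgrades it to a smooth strictly convex graph. For two parameter choices, the difference $u_1-u_2$ satisfies a linear equation whose coefficients depend continuously on the parameters, and the same weighted estimate delivers $\|u_1-u_2\|_{C^{2,\sigma}_\beta}\lesssim\|w_0^{(1)}-w_0^{(2)}\|$, which is strictly stronger than continuity in the locally uniform topology.

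\textbf{Main obstacle.} The principal difficulty is the weighted linear theory for $L_{\bar u}$: the operator is only degenerate-elliptic (its ellipticity degenerates wherever $|D\bar u|$ does, and along the shrinker's curvature-vanishing directions in the non-round case), so establishing a bounded right inverse in a weighted space and verifying that $N(w)$ is genuinely higher order at that weight require care in the choice of $\beta$ and of the norm. Coupled with this, the non-round case admits no explicit diagonalization of the angular Sturm--Liouville problem, so the count $2k-1$ must be secured through $k$-fold symmetry, reparametrization rigidity of the shrinker, and comparison arguments rather than direct computation.
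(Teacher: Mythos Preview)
Your strategy matches the paper's in spirit---linearize around the homogeneous blow-down, identify the finite-dimensional space of effective Jacobi fields, and perturb via a fixed-point argument in a weighted space---but there is a genuine gap in how you pass from the perturbative construction to an \emph{entire} solution on all of $\mathbb{R}^2$.

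The contraction you describe can only be run on an exterior region: the linearized operator $L_{\bar u}$ degenerates at the critical point of $\bar u$, the forcing $(1+|Du|^2)^{2-\frac{1}{2\alpha}}-|D\bar u|^{4-\frac{1}{\alpha}}$ is not small relative to $L_{\bar u}w$ on compact sets, and the weighted H\"older norms you invoke are tailored to behavior at infinity, not across the origin. So the output of your fixed point is a translator defined only outside a large compact set---this is precisely what the paper obtains in Theorem~\ref{thm-existencerevised11}. Your sentence ``interior Monge--Amp\`ere regularity upgrades it to a smooth strictly convex graph'' conflates \emph{regularity} with \emph{existence}: regularity theory improves a solution once you have one, but does not manufacture a solution on the missing compact region. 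The paper devotes Section~\ref{sec-global-bar} and the first step of Section~\ref{sec-thm-existence} to exactly this: from each exterior translator it builds global viscosity super/sub-solutions by gluing explicit rotationally symmetric inner barriers to carefully adjusted exterior solutions (Proposition~\ref{prop-global-barriers}), then passes to the Legendre dual, solves Dirichlet problems on an exhaustion, and extracts a limit trapped between the barriers. This filling-in step is substantial and is absent from your proposal; without it you have only Theorem~\ref{thm-existencerevised11}, not Theorem~\ref{thm-existence}.

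Two secondary points. First, the paper works not with the graph $u$ but with the support function $S(l,\theta)$ of level curves and the variable $s=\ln l$; the linearized operator \eqref{eq-L-hat} then has constant coefficients in $s$ and a transparent angular part $h^{1/\alpha}(\partial_{\theta\theta}+1)$, which sidesteps the degeneracy issues you flag and makes the explicit inversion formulas \eqref{eq-wj}--\eqref{eq-wj2} available. Second, your argument for the count $2k-1$ in the non-radial case (``$\mathbb{Z}_k$-characters, Andrews' rigidity, variational comparison'') is only gestured at; the paper imports this from a separate spectral result \cite{ChoiSun} (see Remark~\ref{remark-K}), and it is not clear that comparison against the round model alone pins down the number of effective modes.
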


\begin{table}[ht] 
  \centering{\renewcommand{\arraystretch}{2.5}
\begin{tabular}{ |c|c|c|c|c| c} 
 \hline 
  & \includegraphics[scale=.7]{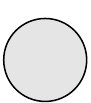}& \includegraphics[scale=.67]{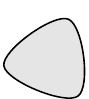} & \includegraphics[scale=.7]{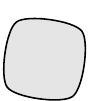} & \includegraphics[scale=.7]{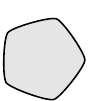}  & 
$\ba \cdots \\ \\\ea $ \\ 
\hhline{|=|=|=|=|=|=} 
 $\frac 19 \le \alpha <\frac 14$ & 5 & X & X  & X & $\cdots$  \\ 
\hline
 $\frac 1{16} \le \alpha <\frac 19$ & 7 & 5 & X & X & $\cdots$ \\ 
 \hline
  $\frac 1{25} \le \alpha <\frac 1{16}$ & 9 & 5 &7 & X& $\cdots$  \\ 
 \hline
  $\frac 1{36} \le \alpha <\frac 1{25}$ & 11 & 5 & 7 & 9 & $\cdots$ \\ 
 \hline
 $\vdots$ & $\vdots$ & $\vdots$ & $\vdots$ & $\vdots$ & $\ddots $ \\ 
\end{tabular}
}
\vspace{.4cm}

\caption{the number of parameters for fixed blow-down homogeneous solutions}
\label{table1}
\end{table}

\begin{remark} [Rigid motion]
 
\begin{enumerate}[(a)]
\item  For each family in Theorem \ref{thm:classification-1}, three parameters always correspond to   the translation in $\mathbb{R}^3$: see (iii) in Theorem \ref{thm-existence}. 

\item  In the case of non-radial blow-downs, the family is presented for a fixed blow-down and hence the rotation about the  $x_3$-axis can not be counted: if one rotates a translator about the $x_3$-axis, then the blow-down also changes. 
  In the case of the radial blow-down, a rotation about the $x_3$-axis provides another solution having the same blow-down. Our construction does not explicitly count the rotation as one of parameters. Nevertheless, in the construction, the deviation of translators from the radial blow-down does not prefer a particular direction, and this suggests   rotated copies of a given solution are also contained in the family.  This point will become clear once we show the family contains all possible solutions in forthcoming research\footnote{The first preprint of current research \cite{choi2021translating}, posted in 2021, includes an existence result weaker than Theorem \ref{thm:classification-1} and a classification theorem. The preprint, however, has an error in the classification part. In this paper we establish the improved existence Theorem \ref{thm:classification-1} which entails  the quantitative estimate in Corollary \ref{cor-wexpression}. They will play a crucial role in the corrected classification proof that will be updated in the new version of \cite{choi2021translating}.}, and hence the family itself is invariant under rotations.  
\end{enumerate}

\end{remark}

A Jacobi field of a given blow-down refers a solution to the linearized equation about the blow-down. This represents a candidate for an infinitesimal deformation in the space of translating solitons.  If we write translators as graphs (a.k.a. fixing a gauge), then 
a Jacobi field that can be written as a separation of variables is effective  when  the growth rate $\beta$ in $|x|^\beta$ respectively belongs to $ (0, \frac{1}{1-2\alpha}] $  and  $ (0, \frac{1}{1-2\alpha}) $   in the case of a non-radial blow-down and  in the radial blow-down.  
Note that $\frac{1}{1-2\alpha}$ is the growth rate of given translator. Heuristically,  the lower bound of the growth rate $\beta$ is needed as any two graphical translator that converges to each other near infinity have to be identical by the comparison principle, and the upper bound condition is imposed as a Jacobi field which grows strictly faster than the solution itself will not generate a solution asymptotic to the given one. It is a delicate question if a Jacobi field that has the same growth rate of the given blow-down can generate a solution. Indeed, non-rotational blow-downs always have one-dimensional such Jacobi fields generated by rotations with respect to $x_3$-axis. On the other hand, the radial blow-down have such Jacobi fields for certain $\alpha=\frac{1}{m^2}$, $m=3,4,\ldots$, and in the forthcoming research we show that there is no translator generated by those Jacobi fields. Since the existence of translators in this paper is shown for each fixed blow-down, non-radial blow-downs are not allowed to rotate and hence we will only count Jacobi fields whose growth rates are strictly less than $\frac{1}{1-2\alpha}$ both radial and non-radial cases and prove the result.

\bigskip

Merely constructing a family of surfaces may provide no information regarding how the surfaces change as the parameter of the family changes: the surfaces can jump discontinuously with respect to the variation of the parameter. Indeed, with a proper choice of topology in the space of surfaces, our construction shows the continuity of the parametrization, and hence the family forms a topological manifold. Although there is no distinction on the regularities of convergence due to elliptic regularity theory, one has to choose the topology of locally uniform convergence rather than the one induced by the global convergence. Indeed, the (Hausdorff) distance between two translating surfaces is infinite unless they are identical upto a translation in space. This is foreseen by the fact that an effective Jacobi field grows near infinity unless it is generated by a translation.

To achieve this objective,  significant effort has been devoted to prove quantitative estimates on solutions near infinity, which are necessary for the delicate continuity statement in Theorem \ref{thm:classification-1} and for the future research. More precisely, in this work,  we express a surface by the support function of its level set in the plane $x_3=\ell$, denoted by $S(\ell,\theta)$. In this expression, the blow-down homogeneous function has a form $\sigma^{-1} \ell ^{\sigma} h(\theta)$, where $\sigma=1-2\alpha$ and $h(\theta)$ is the support function of a shrinking curve. We construct a family of translators $\{S_{\mathbf{a}}(\ell,\theta)\}_{\mathbf{a}\in \mathbb{R}^K}$, which have the asymptotic expansion $S_{\mathbf{a}}(\ell,\theta) = \sigma^{-1} \ell^\sigma h(\theta)+ o(\ell^{\sigma})$. Here, $K$ denotes the dimension of effective Jacobi fields, i.e., the number of parameters. Natural questions are whether (i) one has better estimates on $o(\ell ^\sigma)$. e.g. an improvement on the rate $\sigma$, (ii) there is a quantitative estimate revealing how large $\ell$ should be to have a desired smallness of this error, and (iii) such previously mentioned estimates can be made uniform in the parameter $\mathbf{a}\in \mathbb{R}^K$. Note it is unlikely that one has an affirmative answer to (iii).  For instance, let $\Sigma \subset \mathbb{R}^3$ be a given translating surface; when its translated solutions $\Sigma+s\vec v$ are considered for a non-zero $\vec v \in \mathbb{R}^3$ and large $s<\infty$, no uniform estimate is expected to hold for such a family. We may only expect estimates that are locally uniform in parameters.

Corollary \ref{cor-wexpression} answers  (i) and (ii) in the way that is locally uniform on the parameter. Roughly, we show   the rate is $\sigma-\eps$ with some $\eps = \eps(h,\alpha)>0$ and 
\be \label{eq-ratetoblowdown} |S_{\mathbf{a}}(\ell,\theta)-\sigma^{-1} \ell ^\sigma h(\theta)| \le  C \ell ^{\sigma -\eps} 
\ee 
for $\ell \ge M (1+ |\mathbf{b}(\mathbf{a})|)$.   Here, $|\mathbf{b}(\mathbf{a})|$ is the effective modulus of parameter in Definition \ref{def-b(a)},  $C=C(h,\alpha, |\mathbf{a}|)$, and $M=M(h,\alpha)$. This would be an optimal as it is the estimate anticipated by the behavior of Jacobi fields. Next, we also obtain an estimate on the difference between two translators. i.e., we estimate the rate of convergence of one translator to another translator in terms of the difference in their parameters. In Proposition \ref{thm-modulicont-barr} and in the later part of Section \ref{sec-thm-existence}, again roughly speaking, we show that 
\be \label{eq-ratediffer}|S_{\mathbf{a}_1}(\ell,\theta) -S_{\mathbf{a}_2}(\ell,\theta)| \le C |\mathbf{a}_1-\mathbf{a}_2|\ell ^{\sigma-\eps} + o(\ell ^{-2\alpha}) 
\ee 
for $\ell \ge C(1+\max_{i=1,2} |\mathbf{b}(\mathbf{a}_i)|)$  with some $C=C(\alpha,h, \max_{i=1,2} |\mathbf{a}_i|)<\infty.$ In Proposition \ref{thm-modulicont-barr}, it was shown for each family of global barriers $\{S_{\mathbf{a},+}\}$, and  $\{S_{\mathbf{a},-}\}$, respectively,  the estimate has no extra $o(\ell^{-2\alpha})$ error. The error appears since $|S_{\mathbf{a},+}-S_{\mathbf{a},-}|=o(\ell ^{-2\alpha})$ and the solution $S_{\mathbf{a}}$ sits between two global barriers. Indeed, one may consider the error $o(\ell ^{-2\alpha})$ as negligible: global barriers uniquely determine a translating surface in sitting between them. A difference of order $o(\ell ^{-2\alpha})$ makes no difference among translators as  by applying the comparison principle directly, they must be identical (c.f. Lemma \ref{lem-uniqueness}).

The estimates \eqref{eq-ratetoblowdown} and \eqref{eq-ratediffer} are pivotal for the topological assertions.  The estimate \eqref{eq-ratediffer} is a key component in the continuity statement of our main theorem. In the future research, we aim to classify translating surfaces by establishing that each translator possesses a unique blow-down and it is one of the solutions constructed in this work. Once the classification is resolved, the previous continuity implies the continuity of the bijection from $\mathbb{R}^K$ to the moduli space. Corollary \ref{cor-wexpression} will provides a delicate compactness theorem for translating surfaces with diverging parameters (once they are appropriately rescaled), which will be necessary for ensuring the continuity of the bijection in reverse and thus verifies the topology of moduli space.

%

\section{Preliminaries and main result}\label{sec-prelim}


 \subsection{Change of coordinates} \label{section-2.2}

   Given a translator $\Sigma$ which is a graph of strictly convex entire function $u(x)$ on $\mathbb{R}^2$, suppose $\inf_{\mathbb{R}^2} u>-\infty$ and hence $\{x\,:\, u(x)=l\}$ is a convex closed curve for all $l >\inf_{\mathbb{R}^2} u $. We   define the support function of the level convex curve $\{u(x)=l\}\subset \mathbb{R}^2$  {at height $l$}  as follows: for $(l,\theta)\in  \{l\in \mathbb{R}: l >\inf_{\mathbb{R}^2} u\}\times \mathbb{S}^1$, 
  \bea \label{eq-supportfunction} 
  S(l,\theta) := \sup_{\{u(x)=l\}} \la x, (\cos \theta, \sin \theta) \ra  .
  \eea  
When no confusion is possible, we simply call $S(l,\theta)$ the support function of a given convex hypersurface in this paper. 
  Then the translator equation  \eqref{eq-translatorgraph} translates into the following equation in terms of $S(l,\theta)$.
  
  \begin{prop}\label{prop-supportfunction} Let  $u$ be a  smooth strictly convex function on $\mathbb{R}^2$ with $\inf_{\mathbb{R}^2} u >-\infty$. 
  \begin{enumerate}[(a)]
 \item If $u$ solves the translator equation \eqref{eq-translatorgraph}, then 
  $S=S(l,\theta)$  solves  the equation
\begin{equation}\label{eq-translatorsupport}
 S_{\theta\theta}+S+(1+S_l^{-2})^{\frac1{2\alpha}-2}S_l^{-4}S_{ll} =0 ,
\end{equation}  
   on  $\{l\in \mathbb{R}: l >\inf_{\mathbb{R}^2} u\}\times \mathbb{S}^1$. 
   
\item     If $  u$ solves the blow-down equation \eqref{eq-asymptranslatorgraph}, then the corresponding  $S=S(l,\theta)$   solves  \bea \label{eq-asymptranslatorsupport}
 S_{\theta\theta}+S +S_{ll}S_l^{-\frac1\alpha} =0,
 \eea  on the same region.
\end{enumerate}
\begin{proof}See Appendix \ref{section-appendix-1}.\end{proof}
  \end{prop}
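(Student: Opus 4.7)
The plan is to use the classical parametrization of a convex planar curve by its outward-normal angle, applied level-by-level to the graph. For $l > \inf u$, parametrize the level curve $\{u=l\}$ by the angle $\theta \in \mathbb{S}^1$ of its outward unit normal. From the definition \eqref{eq-supportfunction}, the corresponding point is
\[
X(l,\theta) \;=\; \bigl(S\cos\theta - S_\theta\sin\theta,\; S\sin\theta + S_\theta\cos\theta\bigr),
\]
so that $X_\theta = (S+S_{\theta\theta})(-\sin\theta,\cos\theta)$ is the tangent with magnitude equal to the radius of curvature of the level curve, and
\[
X_l \;=\; \bigl(S_l\cos\theta - S_{l\theta}\sin\theta,\; S_l\sin\theta + S_{l\theta}\cos\theta\bigr).
\]

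Next, differentiating the identity $u(X(l,\theta)) \equiv l$ in $l$ and using that $Du$ is parallel to the outward normal $(\cos\theta,\sin\theta)$, I read off $Du = S_l^{-1}(\cos\theta,\sin\theta)$, hence $|Du|^2 = S_l^{-2}$. Then I compute $\det D^2 u$ by the change-of-variables formula
\[
\det D^2 u \;=\; \frac{\det[(Du)_l,\,(Du)_\theta]}{\det[X_l,\,X_\theta]}.
\]
Elementary differentiation of $Du = S_l^{-1}(\cos\theta,\sin\theta)$ produces a numerator of $-S_{ll}/S_l^3$, while expanding the determinant of $[X_l,X_\theta]$ collapses after the obvious cancellations to $S_l(S+S_{\theta\theta})$. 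This yields the key identity
\[
\det D^2 u \;=\; \frac{-S_{ll}}{S_l^4\,(S+S_{\theta\theta})}.
\]

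With this identity in hand, both parts of the proposition are algebraic rearrangements. For (a), substitute into $\det D^2 u = (1+|Du|^2)^{2-1/(2\alpha)}$, use $1+|Du|^2 = 1+S_l^{-2}$, and solve for $S+S_{\theta\theta}$; this reproduces \eqref{eq-translatorsupport}. For (b), substitute into the blow-down equation $\det D^2 u = |Du|^{4-1/\alpha}$ and rearrange to obtain \eqref{eq-asymptranslatorsupport}. The only delicate point is sign bookkeeping: strict convexity with $\inf u > -\infty$ forces $S_l>0$ and $S+S_{\theta\theta}>0$, which combined with the key identity forces $S_{ll}<0$. Verifying that the frame $(X_l,X_\theta)$ is positively oriented (so that the change-of-variables formula holds with the correct sign) is the only step that needs real care; the rest is routine.
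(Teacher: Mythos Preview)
Your proof is correct and follows the same overall strategy as the paper: both parametrize the level curves by the outward-normal angle, derive $Du = S_l^{-1}(\cos\theta,\sin\theta)$ from differentiating $u(X(l,\theta))\equiv l$, obtain the key identity $\det D^2 u = -S_{ll}\big/\bigl(S_l^4(S+S_{\theta\theta})\bigr)$, and then substitute into \eqref{eq-translatorgraph} or \eqref{eq-asymptranslatorgraph}. The only difference is in how that determinant identity is reached. The paper takes a Riemannian-geometry route: it writes the metric $g_{ij}=\langle X_i,X_j\rangle$ in the $(l,\theta)$ chart, computes the covariant Hessian $\overline\nabla_i\overline\nabla_j u$ via Christoffel symbols (arriving at the diagonal matrix $\mathrm{diag}(-S_{ll}/S_l,\,(S+S_{\theta\theta})/S_l)$), and uses $\det D^2 u = \det(\overline\nabla_i\overline\nabla_j u)/\det(g_{ij})$. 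Your Jacobian-quotient formula $\det D^2 u = \det[(Du)_l,(Du)_\theta]\big/\det[X_l,X_\theta]$ is simply the chain rule for the map $x\mapsto Du(x)$ expressed in $(l,\theta)$ coordinates, and is more elementary in that it avoids the Christoffel machinery entirely. As a minor remark, your orientation concern is unnecessary: since numerator and denominator are both computed in the same $(l,\theta)$ frame, any sign from orientation cancels automatically in the quotient.
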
 It is convenient to consider the equations of translators \eqref{eq-translatorsupport} and blow-downs  \eqref{eq-asymptranslatorsupport} together as many results can be dealt with for both at once. So, let us introduce a new parameter $\eta\in \{0,1\}$ and write both equations as 
\begin{equation}\label{Se} 
S+S_{\theta\theta}+(\eta   +S_l^{-2})^{\frac1{2\alpha}-2}S_l^{-4}S_{ll} =0.
\end{equation}
 
 \begin{remark}\label{remark-shrinker}
Let $\bar u$ be    a  $ {\sigma}^{-1}$-homogeneous  solution  to the blow-down equation \eqref{eq-asymptranslatorgraph}, i.e., $\bar u(\lambda x)=  \lambda ^{\frac{1}\sigma} \bar u(x)$ with  $ \sigma:=1-2\alpha$.
  Then the support function $S(l,\theta)$ is   $\sigma$-homogeneous, namely,  
 $ 
 S(l,\theta)=   {\sigma}^{-1}l^{\sigma} h(\theta)$ for some function $h=h(\theta)$ on $\mathbb{S}^1$. 
 From \eqref{eq-asymptranslatorsupport}, we infer  that $h :  \mathbb{S}^1\to \mathbb{R}$ solves \bea\label{eq-shrinker}  h_{\theta\theta}+h=  2\alpha \sigma h^{\frac{\alpha-1}{\alpha}}.\eea  
Indeed, $h(\theta)$ represents the support function for a shrinking soliton under the $\frac{\alpha}{1-\alpha}$-curve shortening flow in $\mathbb{R}^2$. We say a convex closed embedded curve $\Gamma\subset \mathbb{R}^2$  is  a shrinking soliton under the $\beta$-curve shortening flow ($\beta$-CSF) if 
 \bea
  \kappa ^{\beta}=\lambda \langle x, \nu  \rangle 
 \eea   
  for some  $\lambda>0$. Since the curvature of a convex curve is $(h+h_{\theta\theta})^{-1}$  in terms of   its support function  $h(\theta)$, this readily shows that the level curves of the homogeneous solution $\bar u$ are shrinking solitons for the $\frac{\alpha}{1-\alpha}$-CSF which are unique upto rescalings.   
  \end{remark}
 
\medskip
 
 Remark \ref{remark-shrinker} suggests an important connection between the  the asymptotic behavior of  
 translating solitons for  the $\alpha$-GCF at infinity and the shrinking solitons for  the $\frac{\alpha}{1-\alpha}$-CSF.   In  \cite{andrews2003classification},   
Andrews classified all possible shrinking solitons for  the  CSF,  which will be used to classify possible blow-downs of  the translating solitons for  the $\alpha$-GCF on surfaces. 
In the following, we state the classification result of   \cite{andrews2003classification}.

\begin{definition}[$k$-fold symmetry]\label{def:k-fold} 
Let $\Gamma\subset \mathbb{R}^2$ be    a closed embedded curve  that contains the origin in its inside. 
We say $\Gamma$ is $k$-fold symmetric if   there is  the largest positive integer   $k$  such that $\Gamma$ is invariant under the rotation by angle $\frac{2\pi}{k}$. 


Similarly,    an $l$-homogeneous function $u=r^l g(\theta)$ on $\mathbb{R}^2$ is  said to be   $k$-fold symmetric if its level curves are $k$-fold symmetric, i.e., $g(\theta+ \frac{2\pi}{k}) = g(\theta)$ for all $\theta\in \mathbb{S}^1$ and there is no larger integer with the same property.  
	
\end{definition}

\begin{remark}
According to the definition, circles  are not $k$-fold symmetric for any integer $k$.  
    Unless  the curve  $\Gamma$ in Definition \ref{def:k-fold}
  is a circle, there is a  unique positive integer $k$.     
\end{remark}

\begin{theorem} [Shrinkers, \cite{andrews2003classification}] 
 \label{thm-andrews2003} 
{Smooth strictly convex     closed embedded} shrinking solitons to the $\frac{\alpha}{1-\alpha}$-CSF 
 \bea  \kappa^{\frac{\alpha}{1-\alpha}}=  \langle x, \nu\rangle\eea 
 for $\alpha \in (0, 1)$   are classified as follows:
\begin{enumerate}[(a)]
\item If $\alpha=\tfrac14$,  all ellipses    centered at the origin   with  the area $\pi$ are only    shrinking solitons. 
\item If $\alpha\in \big[\tfrac{1}{9},\tfrac{1}{4}\big)\cup \big(\tfrac{1}{4},1\big)$, the unit circle is the only shrinking soliton. 
\item If $\alpha\in \big[\tfrac{1}{(m+1)^2},\tfrac{1}{m^2}\big)$ for some integer $m\ge 3$,  there exist exactly $(m-1)$ shrinking solitons modulo rotations. These are the unit circle  and the  $k$-fold  symmetric curves for   $3\leq k\leq m$.	 

\end{enumerate}
\end{theorem}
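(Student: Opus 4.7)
The approach is to reduce the shrinker equation to a Hamiltonian ODE on $\mathbb{S}^1$ and classify its periodic orbits. Writing a shrinker via its support function $h(\theta)$ and using $\kappa=(h+h_{\theta\theta})^{-1}$ together with $\la x,\nu\ra = h$, the equation $\kappa^{\alpha/(1-\alpha)} = \la x,\nu\ra$ becomes, with $\beta := \alpha/(1-\alpha)$,
\be
h_{\theta\theta} + h = h^{-1/\beta}.
\ee
Smooth strictly convex closed embedded shrinkers correspond exactly to smooth positive $2\pi$-periodic solutions of this ODE (strict convexity $h+h_{\theta\theta}=h^{-1/\beta}>0$ is automatic), and a $k$-fold symmetric shrinker is such a solution of minimal period $2\pi/k$.

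Multiplying the ODE by $h_\theta$ and integrating gives the conserved energy
\be
E(h,h_\theta) := \tfrac{1}{2}(h_\theta^2+h^2) - \tfrac{\beta}{\beta-1}h^{(\beta-1)/\beta},
\ee
whose level sets foliate the $(h,h_\theta)$ phase plane by closed orbits surrounding the unique equilibrium $h\equiv 1$, which is the unit circle. Linearizing at $h=1$ gives $\phi_{\theta\theta}+\alpha^{-1}\phi=0$, so the period of the closed orbits approaches $T_0 := 2\pi\sqrt{\alpha}$ at the equilibrium. At the opposite extreme, as $E\to\infty$ the orbit oscillates between some large $h_+\sim\sqrt{2E}$ and a small $h_-\to 0$; in the outer region the motion is approximately simple-harmonic ($\ddot h=-h$), while the bounce near $h=0$ induced by $h^{-1/\beta}$ takes vanishing time, so the period tends to $T_\infty := \pi$. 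The classification thus reduces to determining for which integers $k$ the value $2\pi/k$ lies in the image of the nonlinear period function $T(E)$.

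The principal analytic step is to prove that $T(E)$ is strictly monotone on the period annulus for $\alpha\neq 1/4$; together with the limits $T_0$ and $T_\infty$, its image is then the open interval with endpoints $T_0$ and $\pi$. When $\alpha<1/4$, so $T_0<\pi$, the integers $k$ satisfying $T_0<2\pi/k<\pi$ are exactly $3\leq k\leq m$, yielding one $k$-fold symmetric shrinker per such $k$; the endpoint $2\pi/2=\pi$ is not attained, excluding a nontrivial $2$-fold symmetric solution. When $\alpha\in(1/4,1)$, $T_0>\pi$ and no $2\pi/k$ with $k\geq 2$ lies in the image, so only the circle remains. At the borderline $\alpha=1/4$ (so $\beta=1/3$), direct substitution shows that $h(\theta)=\sqrt{a^2\cos^2\theta+b^2\sin^2\theta}$ with $ab=1$ solves $h_{\theta\theta}+h=h^{-3}$; these are precisely the support functions of area-$\pi$ centered ellipses, and a parameter count shows that they foliate the entire period annulus, degenerating $T(E)$ to $\pi$. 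The hardest part will be proving the strict monotonicity of $T(E)$ for $\alpha\neq 1/4$, typically via a Chicone-type sign criterion on the potential $V(h)=\tfrac{1}{2}h^2-\tfrac{\beta}{\beta-1}h^{(\beta-1)/\beta}$, whose degeneracy at $\alpha=1/4$ reflects the integrability of that special case.
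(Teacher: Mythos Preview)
The paper does not give its own proof of this theorem: it is stated with attribution to Andrews \cite{andrews2003classification} and used as a black box (the very next line reads ``From Theorem~\ref{thm-andrews2003}, we have the following corollary\ldots''). So there is no in-paper argument to compare against.

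That said, your outline is the correct one and is essentially Andrews' original strategy: rewrite the shrinker equation for the support function as the second-order ODE $h_{\theta\theta}+h=h^{-1/\beta}$, observe the first integral, compute the limiting periods $T_0=2\pi\sqrt{\alpha}$ at the circle and $T_\infty=\pi$ at the boundary of the period annulus, and then count the integers $k$ with $2\pi/k$ strictly between these limits. Your identification of the monotonicity of the period function as the crux is accurate, and the degeneration at $\alpha=1/4$ to the isoperiodic family of ellipses is exactly the exceptional case. Two small comments: first, what you have written is a sketch rather than a proof, since the monotonicity step is only named (``Chicone-type sign criterion'') and not carried out; in Andrews' paper this is done by a direct and somewhat delicate computation, and you would need to supply the details. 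Second, you should also remark on why a solution of minimal period $2\pi$ (i.e.\ $k=1$) cannot occur, since $2\pi$ lies outside the open interval $(T_0,\pi)$ for $\alpha<1/4$ and outside $(\pi,T_0)$ for $\alpha>1/4$, which rules out any non-symmetric embedded shrinker.
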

 
From Theorem \ref{thm-andrews2003}, 
   we have the following corollary for   the homogeneous solutions to  \eqref{eq-asymptranslatorgraph}. 
 
 \begin{corollary}[Classification of homogeneous solutions]\label{cor-anderews2003} 
 	Given $\alpha \in (0,1/4)$,    let $m$ be the largest integer satisfying $m^2 < \tfrac 1\alpha$. 
  Then modulo rotations in $x_1x_2$-plane, there are exactly $(m-1)$ homogeneous solutions to \eqref{eq-asymptranslatorgraph}, which are the rotationally symmetric solution and  (if $m\geq3$,) $k$-fold symmetric solutions for  $3\leq k\leq m$.

 \end{corollary}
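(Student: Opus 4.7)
The plan is to deduce this corollary directly from Andrews' shrinker classification (Theorem~\ref{thm-andrews2003}) via the bijection between $\sigma^{-1}$-homogeneous solutions of the blow-down equation and strictly convex closed embedded shrinkers recorded in Remark~\ref{remark-shrinker}. Once this bijection is made explicit modulo rotations of $\mathbb{R}^2$, the argument is essentially bookkeeping.

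First I would formalize the correspondence. The forward direction is already carried out in Remark~\ref{remark-shrinker}: any $\sigma^{-1}$-homogeneous solution $\bar u$ of \eqref{eq-asymptranslatorgraph} has support function of the form $S(l,\theta) = \sigma^{-1} l^\sigma h(\theta)$, with $h$ satisfying the shrinker ODE \eqref{eq-shrinker}; hence $h$ is (up to the normalization fixed by \eqref{eq-shrinker}) the support function of a smooth strictly convex closed embedded shrinker of the $\frac{\alpha}{1-\alpha}$-CSF. For the reverse direction, starting from such a shrinker $\Gamma$ with support function proportional to $h$ solving \eqref{eq-shrinker}, I would define $\bar u$ to be the unique $\sigma^{-1}$-homogeneous function whose level set $\{\bar u = l\}$ is the rescaling $l^\sigma \Gamma$, and verify via Proposition~\ref{prop-supportfunction}(b) that $\bar u$ solves \eqref{eq-asymptranslatorgraph}. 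This last check is automatic, since under the homogeneity ansatz the PDE \eqref{eq-asymptranslatorsupport} reduces exactly to the ODE \eqref{eq-shrinker}. Rotating $\bar u$ in the $x_1 x_2$-plane rotates its level curves and conversely, so the bijection descends to equivalence classes modulo rotations.

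Second I would count using Theorem~\ref{thm-andrews2003}. The defining condition on $m$ places $\alpha$ in $[1/(m+1)^2, 1/m^2)$, so case (b) or (c) of Andrews' theorem applies. When $m=2$, i.e.\ $\alpha\in[1/9,1/4)$, case (b) yields only the unit circle as a shrinker, producing one homogeneous solution (the rotationally symmetric one), which matches $m-1=1$. When $m\ge 3$, case (c) yields the unit circle together with $k$-fold symmetric shrinkers for $3\le k\le m$, for a total of $1+(m-2)=m-1$ equivalence classes modulo rotations, as claimed.

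Finally I would match the symmetry types across the bijection: the $k$-fold symmetry $h(\theta+2\pi/k)=h(\theta)$ transfers directly to $k$-fold symmetry of the level curves of $\bar u$, hence of $\bar u$ itself in the sense of Definition~\ref{def:k-fold}; the circle (where $h$ is constant) corresponds to the rotationally symmetric solution. There is no genuine obstacle — the substantive work is done by Remark~\ref{remark-shrinker} and Theorem~\ref{thm-andrews2003} — and the only step requiring care is the reverse direction of the bijection, ensuring that the reconstructed $\bar u$ is indeed a smooth strictly convex solution of \eqref{eq-asymptranslatorgraph} on all of $\mathbb{R}^2$, which follows from strict convexity and smoothness of $\Gamma$ together with the explicit homogeneous formula.
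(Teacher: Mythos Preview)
Your proposal is correct and is exactly the argument the paper has in mind: the corollary is stated immediately after Theorem~\ref{thm-andrews2003} with no separate proof, precisely because it follows from the shrinker--homogeneous-solution correspondence of Remark~\ref{remark-shrinker} together with the case-by-case count in Andrews' classification. Your bookkeeping (placing $\alpha$ in $[1/(m+1)^2,1/m^2)$ and reading off cases (b) and (c)) and your observation that rotations of $\bar u$ correspond to rotations of $h$ are just what is needed to make the implicit deduction explicit.
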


 \medskip
 
 In order to   study translators for the     $\alpha$-GCF,   we   consider the asymptotic expansion  of the support function  near infinity. Let   $h$ be  a solution to  the equation \eqref{eq-shrinker} which is satisfied by   the support function  of    the homogeneous solutions  to  \eqref{eq-asymptranslatorgraph}. 
Suppose that $S(l,\theta)$ represents a translator and it is asymptotic to the homogeneous function $  {\sigma}^{-1} l^{\sigma} h(\theta)$, which is a solution to \eqref{eq-asymptranslatorsupport}, near $l=\infty$. 
To investigate the higher order asymptotics, we express the solution $S(l,\theta)$  as follows:
 \bea \label{eq-w}
 S(l,\theta)=  {\sigma}^{-1}   e^{\sigma s} h(\theta) + w(s,\theta) \quad \text{ for }  s= \ln l
 \eea 
with some function $w=w(s,\theta)$. If $S(l,\theta)$ solves \eqref{Se} (translator or blow-down equation),  $w(s,\theta)$ solves an equation of form
\be \label{eq-w29} w_{ss}+w_s+{h}^{\frac1\alpha}(w_{\theta\theta}+w)  = -E_1(w)-\eta E_2(w).\ee 
Here, the precise expressions of the error terms   $E_1(w)$ and $E_2(w)$ can be found in Lemma \ref{lem-w-eq}, but we will rarely make use of them {once the structural estimates in Lemma \ref{lem-error-estimate} are shown.}

Regarding the error terms above,   observe that a given solution to the blow-down equation has a rescaling denoted by $S_{\lambda}(l,\theta ) = \lambda^{-1} S(\lambda ^{\frac{1}{\sigma}} l ,\theta)$. Consequently, $w(s,\theta)$ is transformed into $w_{\lambda}(s,\theta) = \lambda^{-1} w(s+ \ln \lambda ^{\frac{1}{\alpha}},\theta) $. 
From this perspective, the function $e^{-\sigma s} w(s,\theta)$ behaves as  scaling invariant one. 
For this reason, $e^{-\sigma s} E_1(w)$ is a collection of terms that are at least quadratic in $e^{-\sigma s}w$ (and its derivatives), and $e^{-\sigma s}E_2(w)$ is a collection of       rapidly decaying terms in $s$, which are due to the difference between the translator and the blow-down equation.  The properties mentioned above are summarized to the extent necessary for this paper in Lemma \ref{lem-error-estimate}.

 \subsection{Function space} \label{subsection-spectral} 
For a given shrinker $h=h(\theta)$, let us   consider the operator $L$    defined  by 
 \bea \label{eq-linearopL}
 L\varphi=:h^{\frac1{\alpha}}( \varphi_{\theta\theta}+\varphi) .\eea
Let 
$L^2_h(\mathbb{S}^1)$ denote the Hilbert space of   functions in $L^2(\mathbb{S}^1)$  equipped with the inner product 
 \bea \label{eq-innerh} 
 ( f,g)_h := \int_{\mathbb{S}^1} fg\, h^{-\frac1\alpha}d\theta , \eea
which is  adapted to  the operator $L$. The operator 
 $L$ is self-adjoint with respect to    $ ( \cdot , \cdot )_h$ and the resolvent   $(-L+\mu I)^{-1}$ (for large $\mu$) is compact and  self-adjoint with respect to   $ ( \cdot , \cdot )_h$.  
 Thus it follows that  the   eigenvalues of $L$  are real  and its eigenfunctions provide
an orthonormal basis of $L^2_h(\mathbb{S}^1)$. 
\medskip

Given the shrinker $h$, let us denote the spectrum of $L$  by  
\bea  \label{eq-lambda} \lambda_0 \ge \lambda_1\ge \lambda_2 \ge \ldots, \eea
where  the    eigenvalues  $\lambda_j$ are 
of finite multiplicity and $\lambda_j\to-\infty$.    An  orthonormal basis of $L^2_h(\mathbb{S}^1)$  is denoted by $\{\varphi_j \}_{j=0}^\infty$  which  consists 
of eigenfunctions satisfying 
    \bea \label{eq-lambdavarphi} L \varphi_j = \lambda_j \varphi_j.\eea 
In light of  \cite[Lemma 5]{andrews2002non}, the first three eigenvalues $\lambda_0$, $\lambda_1$ and $\lambda_2$  are known as \bea   \lambda_0=2\alpha\sigma > 0=\lambda_1=\lambda_2 >\lambda_3  \geq \cdots \eea and their    associated  eigenfunctions are 
\be 
h,\quad \cos\theta,\quad    \sin\theta ,
\ee 
respectively. Here, we  recall $\sigma=1-2\alpha$. In view of this observation, from now on, let us  fix 
\begin{equation} \label{eq-c0c1c2}
\varphi_0:=m_0 h, \quad  \varphi_1:= m_1 \cos \theta ,\quad \varphi_2:=m_2 \sin\theta,
\end{equation} 
where $m_i$'s are positive normalizing constants. 

\medskip 

We introduce   $L^2$   and  H\"older norms  which will be used  to study the asymptotic behavior at infinity. 
\begin{definition}[Norms on annuli and exterior domains]\label{def:norm.abbr}
Let $w=w(s,\theta)$ be a function  over    $I \times  \mathbb{S}^1$ for an interval $I\subset \mathbb{R}$.  Let   $k$ be  a non-negative integer,   let $0<\beta<1$ be a    constant and  let   $t$ and  $a$  be    constants such that $[t-a,t+a]\subset I$. 
Let us define 
\bea \Vert w\Vert_{L^2_{t,a}} := \Vert w \Vert_{L^2([t-a,t+a]\times \mathbb{S}^1)}, \eea 
\bea  
\Vert w \Vert _{C^{k}_{t,a}}:= \Vert w  \Vert_{C^{k}([t-a,t+a]\times \mathbb{S}^1)} ,\quad \hbox{and}\quad  \Vert w \Vert _{C^{k,\beta}_{t,a}}:= \Vert w  \Vert_{C^{k,\beta}([t-a,t+a]\times \mathbb{S}^1)}  . 
 \eea

For    a given  function    $w $      over       $[R,\infty)\times  \mathbb{S}^1$ with some  $R\in\mathbb{R}$ and   a constant     $\gamma\in\mathbb{R}$,
we      define a $\gamma$-weighted $C^{k,\beta}$-norm on   exterior domain       $[R,\infty)\times  \mathbb{S}^1$ as follows: 
\bea \label{eq-1566}\Vert w \Vert_{C^{k,\beta,\gamma}_R} :=  \sup_{t\ge R+1}} e^{-\gamma t}\Vert w \Vert_{C^{k,\beta}_{t,1}  .  \eea 
We denote  by $C^{k,\beta,\gamma}_R$  the   Banach  space   equipped with the  norm    $\Vert \cdot \Vert_{C^{k,\beta,\gamma}_R} $.
\end{definition}

\subsection{Main result}

With previously introduced preliminaries, let us introduce the precise statement of our main result. Assume that $\alpha\in  (0,1/4)$ and $h$, a smooth solution  to the shrinker equation \eqref{eq-shrinker} are fixed. In terms of the support function $S(l,\theta)$, we aim to construct solutions that are asymptotic to $\sigma^{-1} l^{\sigma} h(\theta)$. 
The difference $w=w(s,\theta)$ defined in \eqref{eq-w} would have asymptotics
\be \label{def-w}
w(s,\theta)=S(e^s,\theta)-  {\sigma}^{-1}   e^{\sigma s} h(\theta) = o(e^{\sigma s}) \quad \text{ as } s \to \infty.
\ee 
Here, we recall the constant $\sigma:=1-2\sigma$. 
In view of \eqref{eq-w29} and the explanation followed by, the equation
\be  \label{eq-Jacobi}
w_{ss}+w_s+{h}^{\frac1\alpha}(w_{\theta\theta}+w)   =0 
\ee
amounts to the Jacobi equation (the linearized equation)  around ${\sigma}^{-1}   e^{\sigma s} h(\theta) $ near infinity. 
\smallskip

The Jacobi fields, solutions to the Jacobi equation \eqref{eq-Jacobi}, serve as ansatzs for $w(s,\theta)$ given in \eqref{def-w} near infinity. In terms of the spectral decomposition of  $L$ on $L^2_h(\mathbb{S}^1)$ in Section \ref{subsection-spectral}, we may find Jacobi fields of the form $e^{\beta s} \varphi_j(\theta)$  via a separation of variables.  For each $j=0,1,2,\ldots$, there are two solutions $\beta= \beta^+_j$ and $\beta^-_j$ which are two distinct real roots of $ \beta^2+ \beta + \lambda_j =0 $, namely, 
\bea\label{eq-def-beta_j-pm}
 \beta^\pm_j := -  \frac 12 \pm  \frac{\sqrt {1- 4 \lambda_j  }}{2}.
\eea 
Note that   $\beta_0^+= -2\alpha$, $\beta_0^-= -\sigma$, $\beta_1^+=0=\beta_2^+$ and $\beta_1^-=-1=\beta_2^-$, and the  exponents $\beta^+_j$ and $\beta^-_j$ have the following order:
 \bea\ldots  < \beta^-_2=-1= \beta^-_1 <\beta^-_0 =-\sigma< \beta^+_0=-2\alpha <\beta^+_1=0=\beta^+_2< \ldots.\eea 
  We say the decaying rate 
   $ \beta^+_j $ is  `slow' if $\beta^+_j< \sigma=1-2\alpha$.   
  The dimension of `slowly decaying' Jacobi fields  is defined by 
  \bea \label{eq-K}
  K:= \# \left\{j\in \mathbb{N}\cup\{0\}:  \beta^+_j < \sigma \right\}.
\eea  
Here,    the term  `slowly decaying'    was previously used in \cite{Chan1997CompleteMH} for minimal hypersurfaces   asymptotic to minimal cones near infinity.   
Let $\mathcal{J}$ denote  the  space of  the   slowly decaying Jacobi fields: 
\be
 \mathcal{J}:=   \Big\{\,\sum_{j=0}^{K-1} y_j \,  l^{\beta^+_j} \varphi_j(\theta)  \Big\}_{ (y_0,\ldots,y_{K-1})\in \mathbb{R}^{K} } . 
 \ee
We will construct the translators which roughly correspond to the space of slowly decaying Jacobi fields $\mathcal{J}$. 
 
  \begin{remark}\label{remark-K}
 The dimension $K$ can be expressed in terms of $\alpha$ and $h$ as follows.
 \begin{enumerate}[(a)]
     \item If $h$ is radial, then $K = 2m+1 $ with $m=\lceil \alpha^{-1/2}\rceil -1$.  
     \item If  $h$ is $k$-fold symmetric, then $K  = 2k-1.$
 \end{enumerate}
 In fact, 
if $h$ is radial, i.e., $h\equiv (2\alpha\sigma)^{\alpha}$, then   $\lambda_j$ and $\beta^\pm_j$ are explicitly computed,  
and hence  we obtain that  
$ K = 2m+1 $ in the case.  Next, if  $h$ is $k$-fold symmetric,   utilizing the result  of   \cite[Theorem 1.2]{ChoiSun} implies that 
$ K  = 2k-1.$
 \end{remark}
 
The construction is based on a fixed point argument, i.e., we perturb the blow-down by a Jacobi field and design an iteration that gradually (geometrically) reduces the error   originating from the nonlinearity of the equation. In forthcoming papers, we will conversely prove that a slowly decaying Jacobi field appears as the leading asymptotics of the difference between two translators. Then   the translators will be classified  by showing a one-to-one correspondence between the translators and the  constructed translators in this paper. To make this plan work, we construct translators hierarchically starting from the last parameter  (corresponding to the coefficient of the slowest decaying mode)  subject to a reference translator. In this way, the construction assigns each translator a coordinate (parameter) $\mathbf{y}=(y_0, \ldots, y_{K-1})\in \mathbb{R}^K$. Moreover, we will also be interested in the topology of  moduli space of translators. For this reason, here we construct the translators so that they change continuously with respect to the parameter $\mathbf{y}$. Let us introduce  the following  convention: 
\be
  (\mathbf{0}_j , (a_j,\ldots,a_{K-1}))= (\mathbf{0}_j,a_j,a_{j+1},\ldots,a_{K-1}) = (0,0,\ldots, 0,a_j,\ldots, a_{K-1}),   \ee
for $ (a_0,a_1,\cdots, a_{K-1}) \in \mathbb{R}^{K}$   abusing the notation,  where $\mathbf{0}_j $ denotes   the zero vector in $\mathbb{R}^j$. 
\begin{theorem}[Existence]\label{thm-existence}
For given $\alpha \in (0,\frac{1}{4})$ and shrinker $h$, there exists $K$-parameter (see Remark \ref{remark-K}
 for $K$) family of translators, namely $\{\Sigma_{\mathbf{y}}\}_{\mathbf{y} \in \mathbb{R}^{K}}$,   whose level curves are asymptotic to the shrinker after rescalings.
 More precisely, the surfaces $\Sigma_{\mathbf{y}}$ and their support functions $S_{\mathbf{y}}$ defined in \eqref{eq-supportfunction} satisfy the followings:  for some  constant $\eps>0$ depending only on $\alpha$   and $h$, 
\begin{enumerate}[(i)]
\item  $\Sigma_{\mathbf{y}}$ is the graph of a smooth strictly convex entire function  and $S_{\mathbf{y}}$ has the asymptotics  
\bee  S_{\mathbf{y}}(l,\theta) = \sigma^{-1} { l^{\sigma} } h(\theta) + o (l^{\sigma-\eps })\quad \text{ as } l\to \infty.
\eee  
  \item For distinct vectors $\mathbf{\hat y}=(\hat y_0, \ldots,\hat y_{K-1} )$, $\mathbf{\bar y }=(\bar  y_0, \ldots,\bar y_{K-1} )$ in $\mathbb{R}^{K}$, let     $j=\max\{i\,: \, \hat y_i \neq \bar  y_i\}$ be the last parameter that does not coincide. 
Then there holds 
 \bee 
 S_{\mathbf{\hat y}}(l,\theta) -S_{\mathbf{\bar  y}}(l,\theta)= (\hat y_j-\bar y_j)l^{\beta^+_j} \varphi_{j}(\theta)+ o(l^{\beta^+_j-\eps}).
 \eee  

\item The first three parameters correspond to translations in $\mathbb{R}^3$:  for $\mathbf{y} \in\mathbb{R}^{K} $  and $a_j\in  \mathbb{R}$  ($j=0,1,2$),  
\bee  
\Sigma_{\mathbf{y}+(a_0,a_1,a_2,\mathbf{0}_{K-3} )}=  \Sigma_{\mathbf{y}} -a_0m_0 \mathbf{e}_3  +a_1m_1\mathbf{e}_1+a_2m_2\mathbf{e}_2,
 \eee 
 where $m_j$'s are the positive  normalizing constants given in \eqref{eq-c0c1c2}. 

\item The parametrization $\mathbf{y}\mapsto \Sigma_{\mathbf{y}}$ is continuous in the $C^0_{\text{loc}}$-convergence topology of surfaces.
\end{enumerate}
\end{theorem}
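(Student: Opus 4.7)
The plan is to construct the family by a barrier-and-Perron argument, with an induction on the parameter index that exploits the hierarchical structure of the slowly decaying modes $\mathcal{J}$. First I would work on the equation \eqref{eq-w29} for $w(s,\theta) = S(e^s,\theta) - \sigma^{-1}e^{\sigma s}h(\theta)$, whose linear part is the Jacobi operator $\partial_s^2 + \partial_s + L$. Separation of variables using the spectral decomposition $\{\varphi_j,\lambda_j\}$ of $L$ on $L^2_h(\mathbb{S}^1)$ produces mode solutions $e^{\beta_j^{\pm}s}\varphi_j(\theta)$, and slow decay corresponds precisely to $\beta_j^+ < \sigma$, giving the $K$-dimensional space $\mathcal{J}$. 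The strategy is to promote the linear ansatz $\sum_{j<K} y_j e^{\beta_j^+ s}\varphi_j$ to an honest solution of the nonlinear problem, using the error decompositions in Lemma \ref{lem-w-eq} and the structural estimates in Lemma \ref{lem-error-estimate}.

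Next I would build, for each $\mathbf{y}\in\mathbb{R}^K$, a pair of global barriers $S_{\mathbf{y},\pm}$ satisfying respectively the sub- and supersolution property for \eqref{Se} with $\eta=1$, of the form
\begin{equation*}
S_{\mathbf{y},\pm}(l,\theta) = \sigma^{-1} l^{\sigma} h(\theta) + \sum_{j=0}^{K-1} y_j\, l^{\beta_j^+}\varphi_j(\theta) + \Psi_{\pm}(l,\theta; \mathbf{y}),
\end{equation*}
where $\Psi_{\pm}$ is a correction obtained by iteratively solving the inhomogeneous linear problem $w_{ss}+w_s+Lw = -E_1 - E_2$ in weighted H\"older spaces $C^{k,\beta,\gamma}_R$. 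Because the forcing generated by inserting the Jacobi ansatz into $E_1$ is at least quadratic in $e^{-\sigma s} w$ (giving an extra factor $e^{-\sigma s}$) and $E_2$ decays rapidly, each iteration gains a definite rate, and the series converges for $s \ge s_0(h,\alpha,|\mathbf{y}|)$. Tuning $\Psi_{\pm}$ by a tiny multiple of the fastest slow mode yields a strict ordering $S_{\mathbf{y},-} < S_{\mathbf{y},+}$ with gap $o(l^{-2\alpha})$. With both barriers in hand, a Perron-type construction (solving the Dirichlet problem on $\{l \le R_k\}\times\mathbb{S}^1$ between the barriers and taking $R_k \to \infty$) yields a smooth strictly convex translator $S_{\mathbf{y}}$ sandwiched between them. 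This gives (i) and, since translations $-a_0 \mathbf{e}_3$, $a_1\mathbf{e}_1$, $a_2\mathbf{e}_2$ shift $S(l,\theta)$ by $a_0$, $a_1\cos\theta$, $a_2\sin\theta$ to leading order (precisely matching the first three slow modes $l^{-2\alpha}m_0 h$, $m_1\cos\theta$, $m_2\sin\theta$), the translation invariance of \eqref{eq-translator1} together with uniqueness of the solution determined by the barrier family yields (iii).

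For (ii) I would invoke the locally uniform difference estimate \eqref{eq-ratediffer} established at the level of barriers (Proposition \ref{thm-modulicont-barr}). Given $\hat{\mathbf{y}},\bar{\mathbf{y}}$ coinciding above index $j$, the barrier construction is hierarchical: the corrections $\Psi_{\pm}$ for indices $>j$ are identical, so the two barrier pairs differ only through the $j$-th Jacobi mode and strictly lower modes. Applying the linear difference estimate to the sandwiched solutions and absorbing the gap $o(l^{-2\alpha})$ (negligible since $\beta_j^+ > -2\alpha$ only for $j \geq 1$; for $j=0$ the leading difference $(\hat y_0 - \bar y_0)l^{-2\alpha}m_0 h$ still dominates the gap after accounting for the comparison principle and uniqueness as in Lemma \ref{lem-uniqueness}) produces the claimed expansion. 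Statement (iv) follows from (ii) by taking $|\hat{\mathbf{y}}-\bar{\mathbf{y}}| \to 0$: the estimate shows $S_{\hat{\mathbf{y}}} - S_{\bar{\mathbf{y}}} \to 0$ uniformly on compact subsets of $(R,\infty)\times\mathbb{S}^1$, and elliptic regularity upgrades this to $C^0_{\text{loc}}$ convergence of the surfaces.

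The principal obstacle is the hierarchical barrier construction: the nonlinear self-interaction of slow modes can produce resonant terms at exactly a decay rate $\beta_j^+$, which cannot be absorbed by the linear inversion and would destroy the clean asymptotic expansion in (ii). Overcoming this requires solving the iteration \emph{orthogonally} to the slow modes (the coefficient of each $l^{\beta_j^+}\varphi_j$ is reserved for the prescribed parameter $y_j$) and carefully tracking how the weighted norms of the correction $\Psi_{\pm}$ depend on $|\mathbf{y}|$, which is exactly where one needs the locally uniform quantitative estimate $\ell \ge M(1+|\mathbf{b}(\mathbf{a})|)$ of Corollary \ref{cor-wexpression}. A secondary difficulty is showing the iteration converges uniformly in $\mathbf{y}$ on bounded sets, which is what ultimately allows the continuity statement (iv).
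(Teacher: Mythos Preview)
Your outline captures the broad architecture of the paper's argument---linearize around the homogeneous profile, iterate in weighted H\"older spaces to build exterior solutions indexed by slow Jacobi modes, sandwich between barriers, and read off (i)--(iv)---but there are two places where the proposal diverges from what the paper actually does, and one of them is a genuine gap.

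\textbf{The existence step.} You propose to obtain the entire solution by ``solving the Dirichlet problem on $\{l\le R_k\}\times\mathbb{S}^1$ between the barriers and taking $R_k\to\infty$.'' This is not what the paper does, and it is not clear this can be made to work directly. The support-function equation \eqref{eq-translatorsupport} degenerates at the tip (where level curves collapse), so the $(l,\theta)$ chart does not cover the whole surface; and in the graphical coordinates the translator equation $\det D^2u=(1+|Du|^2)^{2-\frac{1}{2\alpha}}$ has gradient dependence on the right-hand side, so the standard Monge--Amp\`ere Dirichlet theory does not apply out of the box. The paper instead passes to the \emph{Legendre dual} $v=u^*$, which solves $\det D^2v=(1+|y|^2)^{\frac{1}{2\alpha}-2}$ with purely position-dependent right-hand side. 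One then solves Dirichlet problems on sublevel sets $\Omega_L=\{v_+<L\}$ of the dual upper barrier, uses the comparison $v_+\le v_L\le v_-$, and passes to the limit using interior Monge--Amp\`ere regularity. Transforming back gives the entire smooth strictly convex $u_\infty$. This Legendre trick is the missing ingredient in your existence step.

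\textbf{The continuity (iv).} You claim (iv) follows from (ii) by sending $|\hat{\mathbf{y}}-\bar{\mathbf{y}}|\to0$. This is not sufficient: statement (ii) only controls the \emph{asymptotic} difference, and only for pairs $\hat{\mathbf{y}},\bar{\mathbf{y}}$ that coincide above some index $j$; it says nothing about behavior on compact sets, nor about general sequences $\mathbf{a}_i\to\mathbf{a}$. The paper's argument is different: Proposition~\ref{thm-modulicont-barr} gives quantitative convergence of the \emph{barriers} on exterior regions as $\mathbf{a}_i\to\mathbf{a}$, the comparison principle transfers this to compact regions, a subsequence of $\Sigma_{\mathbf{a}_i}$ then converges to some translator $\Sigma'$ trapped between $S^+_{\mathbf{a}}$ and $S^-_{\mathbf{a}}$, and Lemma~\ref{lem-uniqueness} (which says two translators with support functions differing by $o(l^{-2\alpha})$ coincide) forces $\Sigma'=\Sigma_{\mathbf{a}}$. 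Incidentally, you have the roles of (ii) and Proposition~\ref{thm-modulicont-barr} slightly reversed: in the paper (ii) comes straight from the hierarchical exterior construction (Theorem~\ref{thm-existencerevised11}(b)) together with the bound \eqref{eq-barww0} on $w_{\mathbf{a}}-\tilde w_{\mathbf{a}}$, while Proposition~\ref{thm-modulicont-barr} is the tool for (iv).

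Your discussion of the obstacle (resonant terms at rate $\beta_j^+$) is on point, and the cure is indeed what the paper does: the inverse $H_{m,R}$ of the Jacobi operator is defined with weight $\gamma$ strictly between consecutive $\beta_j^+$ (Corollary~\ref{cor-ex-linear}), so the iteration never has to invert on a resonant mode.
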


 \medskip

\begin{remark}\label{remark-existence}

\begin{enumerate}[(a)]

\item  It is a direct consequence of $(ii)$ that  $\Sigma_{\mathbf{\hat y}}\neq \Sigma_{\mathbf{\bar y }}$ if $\mathbf{\hat y} \neq \mathbf{\bar y}$. 
\item The construction is designed so that the proof shows more quantitative estimates than  $(i)$ and $(ii)$. See Corollary \ref{cor-wexpression}. 
\item By the regularity theory for the Monge--Amp\`ere  equation, we also get the continuity of parametrization in the $C^\infty_{loc}$-convergence topology in (iv).  
\item The proof equally works for constructing a family of entire solutions to the blow-down equation $\det D^2 u = |Du|^{4-\frac1\alpha}$ and we get the same result except that now each solution $u(x)$ is smooth away from the point at which the gradient $Du$ is $0$.  See Remark \ref{remarkno1} at the end of Section  \ref{sec-thm-existence}.

	\end{enumerate}

\end{remark}


\section{Existence on exterior domains} \label{sec-existence-exterior}


 We  first    establish  the existence of  translators on exterior domains. To begin with, we study an invertibility of the following (asymptotic) Jacobi operator   
  \be\label{eq-L-hat}
  \hat Lw:=  w_{ss} +w_s+{h}^{\frac1\alpha}( w_{\theta\theta}+w) 
  \ee    on exterior regions.     
    
\begin{lemma}[Invertibility of Jacobi operator]\label{lem-ex-linear}
 Let  $\beta$ and $\gamma$ be  constants such that   $0<\beta<1$ and   $ \beta^+_{m-1}< \gamma  <\beta^+_m$  for   some $m\in \mathbb{Z}_{\geq0 }$ (here we use the notational convention $\beta^+_{-1}:= \beta^-_0$).    
  For a given function  $g$ in $C^{0,\beta,\gamma}_R$ for some $R\in\mathbb{R}$, there exists a solution $w\in C^{2,\beta,\gamma}_R $  to the Jacobi equation 
\bea \label{eq-w-linear}
\hat Lw=  w_{ss} +w_s+{h}^{\frac1\alpha}( w_{\theta\theta}+w)    =g\quad\hbox{ on  \,\,} (R,\infty)\times \mathbb{S}^1  \eea with the zero boundary condition: $ w \equiv 0$ on $\{s=R\}\times \mathbb{S}^1$. 
More precisely,  $w$ has an explicit representation   
\be\label{eq-w-decomp}
w(s,\theta)=\sum_{j=0}^\infty  w_j(s) \varphi_j(\theta),\ee
 where 
\begin{align} 
  w_j (s)&= -e^{\beta^-_j s} \int_R^s e^{(\beta^+_j-\beta^-_j)r }\int_r^\infty e^{-\beta^+_j t}g_j(t) dt dr \quad \text{ for  }j \ge m,  \label{eq-wj}\\
  w_j (s)&= e^{\beta^-_j s} \int_R^s e^{(\beta^+_j-\beta^-_j)r }\int_R^r e^{-\beta^+_j t}g_j(t) dt dr \quad  \text{ for }0\le j <m \label{eq-wj2}
\end{align}
with  $g_j(s):= ( g(s,\cdot),\varphi_j )_h$. 
Moreover, there holds an estimate
\bea \label{eq-w-est}
 \Vert w \Vert_{C_R^{2,\beta,\gamma}} \le C \Vert g \Vert_{C_R^{0,\beta,\gamma} }
 \eea  
 for some constant $C>0$ independent of $R\in\mathbb{R}$.  Here, $C$ may depend on $\alpha, h, $ $m,$ $ \gamma,$ and $ \beta$.
 
  \end{lemma}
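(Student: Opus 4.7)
The plan is to solve \eqref{eq-w-linear} mode by mode in the eigenbasis $\{\varphi_j\}$ of $L$ and then reassemble. Expanding $g(s,\theta)=\sum_j g_j(s)\varphi_j(\theta)$ and seeking $w(s,\theta)=\sum_j w_j(s)\varphi_j(\theta)$, the PDE decouples into the family of scalar second-order ODEs $w_j''+w_j'+\lambda_j w_j=g_j$, whose characteristic roots are precisely $\beta_j^\pm$, since $\beta_j^++\beta_j^-=-1$ and $\beta_j^+\beta_j^-=\lambda_j$. The explicit formulas \eqref{eq-wj}, \eqref{eq-wj2} are the variation-of-parameters particular solutions dictated by two requirements: (a) $w_j(R)=0$, so that $w$ vanishes on $\{s=R\}\times\mathbb{S}^1$; and (b) the mode grows no faster than $e^{\gamma s}$. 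For the slow modes $0\le j<m$ the hypothesis $\gamma>\beta_j^+$ makes both homogeneous solutions admissible, so I integrate outward from $R$ against the double Green kernel, and the double vanishing at $s=R$ enforces (a). For the fast modes $j\ge m$ one has $\beta_j^+>\gamma$, so the $e^{\beta_j^+ s}$ branch must be projected out by integrating $g_j$ from $s$ to $\infty$; the remaining $e^{\beta_j^- s}$ piece is then adjusted to vanish at $s=R$ via the outer integral from $R$. A direct substitution confirms each $w_j$ solves its ODE with $w_j(R)=0$.

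The main quantitative work is the weighted $C^0$-estimate, mode by mode and uniform in $R$. Writing $M:=\|g\|_{C_R^{0,\beta,\gamma}}$, one has $|g_j(t)|\le C_j M e^{\gamma t}$ with constants $C_j$ decaying in $j$. For $j\ge m$, $\beta_j^+>\gamma$ gives
\[
\Bigl|\int_r^\infty e^{-\beta_j^+ t}g_j(t)\,dt\Bigr|\le \frac{C_j M}{\beta_j^+-\gamma}\,e^{(\gamma-\beta_j^+)r},
\]
and feeding this into \eqref{eq-wj} together with $\gamma>\beta_j^-$ yields $|w_j(s)|\le C_j' M\,(\beta_j^+-\gamma)^{-1}(\gamma-\beta_j^-)^{-1}e^{\gamma s}$ for $s\ge R$, uniformly in $R$, because the boundary contribution at $r=R$ carries the factor $e^{\beta_j^-(s-R)+\gamma R}\le e^{\gamma s}$. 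A symmetric calculation handles $0\le j<m$ via \eqref{eq-wj2}, exploiting $\gamma>\beta_j^+>\beta_j^-$ throughout. The mode-wise bounds then sum: the finitely many slow modes $j<m$ are trivial, and for $j$ large the spectral gap $\beta_j^+-\gamma\gtrsim|\beta_j^+|$ improves the denominators while $C_j$ decays from the $C^{0,\beta}$-regularity of $g$, yielding $\|w\|_{C^0_{t,1}}\le CM e^{\gamma t}$. The upgrade to the full $C^{2,\beta,\gamma}_R$-norm is then a routine application of interior Schauder estimates for the uniformly elliptic operator $\hat L$ (with $\theta$-dependent coefficients) on slabs $[t-2,t+2]\times\mathbb{S}^1$, which convert the $C^0$ bound for $w$ and the $C^{0,\beta}$ bound for $g$ into $\|w\|_{C^{2,\beta}_{t,1}}\le CMe^{\gamma t}$, i.e.\ \eqref{eq-w-est}.

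The point I anticipate as the main obstacle is turning the formal spectral series into a classical solution in one stroke, since tracking the $j$-dependence of every constant through the Hölder norm is unpleasant. To bypass this, I would first solve the truncated problem obtained by projecting onto the first $N$ eigenmodes, where the sum in \eqref{eq-w-decomp} is finite and well-defined; establish \eqref{eq-w-est} uniformly in $N$ by the estimates above; and then pass to the limit $N\to\infty$ using the Schauder bound to extract a subsequence converging in $C^{2,\beta/2}_{\mathrm{loc}}$. The limit solves \eqref{eq-w-linear} classically, vanishes on $\{s=R\}$, satisfies the weighted estimate, and, by uniqueness of the ODE particular solution consistent with the growth bound (a) and boundary condition (b), must agree on each mode with the formulas \eqref{eq-wj}, \eqref{eq-wj2}.
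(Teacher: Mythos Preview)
Your proposal is correct and its overall architecture---mode-wise variation of parameters for the ODE $w_j''+w_j'+\lambda_jw_j=g_j$, then elliptic regularity to upgrade to $C^{2,\beta}$, with a truncation/limit argument to justify the series---matches the paper's. The one substantive methodological difference lies in how the mode-wise bounds are assembled into a bound on $w$.

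You sum the pointwise estimates $|w_j(s)|\lesssim C_j M\,(\beta_j^+-\gamma)^{-1}(\gamma-\beta_j^-)^{-1}e^{\gamma s}$ directly in $C^0$, relying on the Weyl asymptotics $\beta_j^\pm\sim\pm cj$ (so the denominators give a summable $j^{-2}$) together with a uniform bound $\|\varphi_j\|_{L^\infty}\le C$; the Hölder decay of $C_j$ you invoke is in fact not needed, since the spectral gap alone makes the series converge. The paper instead inserts an $L^2_h$ step (its Claim): it applies Cauchy--Schwarz to each $w_j$ so that after squaring and summing one recovers $\sum_j g_j^2(t)=\|g(t,\cdot)\|_{L^2_h}^2\le C e^{2\gamma t}\|g\|^2_{C^{0,0,\gamma}_R}$ via Parseval, obtaining $e^{-\gamma s}\|w(s,\cdot)\|_{L^2_h}\le C\|g\|_{C^{0,0,\gamma}_R}$; then $W^{2,2}$ estimates plus Sobolev embedding give the $C^0$ bound, and Schauder finishes. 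The paper's route is a bit cleaner in that it never needs to track $\|\varphi_j\|_\infty$, eigenvalue asymptotics, or any $j$-dependent coefficient decay---Parseval does the bookkeeping automatically and only the $C^0$ norm of $g$ enters the $L^2$ estimate. Your route is more direct but leans on auxiliary Sturm--Liouville facts. One minor point: since the weighted norm is $\sup_{t\ge R+1}e^{-\gamma t}\|w\|_{C^{2,\beta}_{t,1}}$, the leftmost slab $[R,R+2]$ touches $\{s=R\}$, so you need boundary (not just interior) Schauder estimates there; with zero Dirichlet data this is routine, and the paper notes the same.
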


 \begin{proof}
  If we express our solution $w= \sum_{j=0}^\infty w_j(s) \varphi_j(\theta)$,  then each function $w_j(s)$ should solve  
  \be \label{eq-w_j}
  (w_j)_{ss}+(w_j)_{s}+\lambda_jw_j=g_j\quad \text{ with }\,\, w_j(R)=0.
  \ee
We note that  this is a second-order ODE in the variable $s$, and it is easy to check   that     $w_j$ given in   \eqref{eq-wj}-\eqref{eq-wj2}  solves \eqref{eq-w_j} with the zero  boundary data.  Here, we notice that  $w_j$  is  well-defined since   $g_j(s)= ( g(s,\cdot),\varphi_j )_h$ belongs to  $C^{0,\beta,\gamma}_R$.  
\smallskip

    To prove  that  $w$   satisfies \eqref{eq-w-linear} and \eqref{eq-w-est},  
 the following weighted $L^2_h$-estimate is crucial. 
  
  \begin{claim}[Weighted $  L^2_h$-estimate] \label{cla-L-infty-ext}
There is a  constant $C>0$ (independent  of  $R$) such that  
 \bea\label{eq-L-2-linear-sol}
  e^{-\gamma s}\Vert w(s,\cdot)\Vert_{L^2_h(\mathbb{S}^1)} =  e^{-\gamma s}  \Big(\sum_{j=0}^\infty {w_j^2(s)} \Big)^{1/2}   \le C \Vert g \Vert_{C_R^{0,0,\gamma}}\quad\hbox{for all $s\geq R$}.
  \eea   
  Here,  a constant $C>0$  may depend on $\alpha$, $ h$,   $m$,  and $\gamma$. 
 \end{claim}

 Once   Claim \ref{cla-L-infty-ext} is shown,  we obtain   a weighted  $C^0$-estimate
 \bea\label{eq-L-infty-linear-sol}
 \Vert w \Vert_{C_R^{0,0,\gamma}} \le C\Vert g \Vert_{C_R^{0,0,\gamma}} 
 \eea
  for a  constant $C>0$ (independent in $R$).  
 Indeed,   note first that     $\sum_{i=0}^j  w_i(s) \varphi_i(\theta)$   solves   \eqref{eq-w-linear}  with $\sum_{i=0}^j  g_i(s) \varphi_i(\theta)$ as the right-hand side.      Since   the equation \eqref{eq-w-linear}   is uniformly elliptic in terms of $(s,\theta)$, we infer \eqref{eq-L-infty-linear-sol} by applying (interior and boundary) $W^{2,2}$-estimates and the Sobolev embedding to $\sum_{i=0}^j  g_i(s) \varphi_i(\theta)$ and then taking $j\to \infty$. Therefore,   by the  Schauder theory,     $w$ satisfies \eqref{eq-w-linear} and \eqref{eq-w-est}, which completes the proof.   
  \medskip
  
  Lastly, we will give the proof  of Claim \ref{cla-L-infty-ext}.

\begin{proof}[Proof of   Claim \ref{cla-L-infty-ext}]  
Let    $\delta>0$ be  a  small constant  such that 
\be
0<\delta< \min(\beta_m^+-\gamma, \gamma-\beta_m^-). 
\ee
 By  H\"older's inequality applied to \eqref{eq-wj},   it follows   that  for $j\ge m$ and  $s\ge R$, 
\bea w_j^2 (s)  &\le e^{2\beta^-_js}\left[\int_R^s e^{2(\gamma- \beta^-_j -\delta) r} dr \right]\left[\int_R^s e^{2(\beta^+_j +\delta-\gamma )r}\Big( \int _r^\infty  e^{-\beta^+_j t}g_j(t) dt \Big)^2 dr \right]. 
\eea  
 We see that for $s\ge R$, 
\be\int_R^s e^{2(\gamma- \beta^-_j -\delta)  r}dr \le \frac{e^{2(\gamma- \beta^-_j -\delta) s}-e^{2(\gamma- \beta^-_j -\delta)  R}}{2(\gamma- \beta^-_j -\delta) }  \le \frac{e^{2(\gamma- \beta^-_j -\delta)  s}}{2(\gamma- \beta^-_j -\delta) }  
\ee
since  $\gamma -\beta^-_j\geq \gamma -\beta^-_m >\delta>0$ for $j\geq m$. Using    H\"older's inequality again implies that 
\be
\ba
 \Big( \int _r^\infty  e^{-\beta^+_j t}g_j(t) dt \Big)^2 &\le  \int_r^\infty e^{2(\gamma+\delta-\beta^+_j)t}dt \int_r^\infty e^{-2(\gamma+\delta) t}g^2_j(t) dt\\
&= \frac{e^{2(\gamma+\delta-\beta^+_j)r}}{2(  \beta^+_j-\gamma-\delta)}\int_r^\infty e^{-2(\gamma+\delta) t}g^2_j(t) dt  
\ea
\ee
since  $\beta^+_j-\gamma \geq \beta^+_m-\gamma  > \delta >0 $ for $j\geq m$. 
Combining the estimates above together, we get  that for $j\geq m$ and $s\geq R$, 
\be
\ba
 e^{-2\gamma s} {w_j^2(s)}  &\le \frac{ e^{-2\delta s }}{4 (\gamma- \beta^-_j -\delta) (\beta^+_j-\gamma -\delta)}\int_R^s e^{4\delta r} \int_r^\infty e^{-2(\gamma+\delta) t}g^2_j(t) dtdr \\
& \le \frac{ e^{-2\delta s }}{4 (\gamma- \beta^-_m -\delta) (\beta^+_m-\gamma -\delta)}\int_R^s e^{4\delta r} \int_r^\infty e^{-2(\gamma+\delta) t}g^2_j(t) dtdr . 
\ea\ee
 
 Since
\be  \sum_{j=1}^\infty  g_j^2 (t) = \Vert g(t,\cdot)\Vert_{L^2_h(\mathbb{S}^1)}^2 \le Ce^{2\gamma t}\Vert g\Vert_{C_R^{0,0,\gamma}}^2 \quad\hbox{for $t\geq R$},\ee
  we obtain that for $s\geq R$,
\be\label{eq-w_j-sum-big}
\ba
 e^{-2\gamma s} \sum_{j\ge m}  {w_j^2(s)} & \le  C{ e^{-2\delta s }}\int_R^s e^{4\delta r} \int_r^\infty e^{-2(\gamma+\delta) t}\sum_{j\ge m}g^2_j(t) dtdr\\
 &\le C{ e^{-2\delta s }}\int_R^s e^{4\delta r} \int_r^\infty e^{-2\delta t}\Vert g\Vert_{C_R^{0,0,\gamma}}^2dtdr  \le  C\Vert g\Vert_{C_R^{0,0,\gamma}}^2.
\ea\ee
Here,  a constant $C>0$  may depend on $\alpha$, $ h$,    $ \beta^+_m-\gamma $,  $ \gamma-\beta^-_m  $ and  $\delta$, and may vary from line to line.

A similar argument applied to \eqref{eq-wj2} yields that   
\be \label{eq-w_j-sum-sm}
e^{-2\gamma s}  \sum_{0\le j\le m-1} {w_j^2(s)} \le C\Vert g\Vert_{C_R^{0,0,\gamma}}^2,\ee 
where we used that 
$  \gamma-\beta_j^+\ge    \gamma-  \beta_{m-1}^+>0 $ 
for $0\leq j \leq m-1$.
Thus     by   \eqref{eq-w_j-sum-big}  and \eqref{eq-w_j-sum-sm}, 
we obtain the weighted $L^2_h$-estimate in  \eqref{eq-L-2-linear-sol},  which    finishes the proof of Claim  \ref{cla-L-infty-ext}. 
\end{proof} 
The proof of Lemma  \ref{lem-ex-linear} is complete.
 \end{proof}

  In  light of  Lemma  \ref{lem-ex-linear},   define  an inverse operator    of $\hat L$:  
\be H_{m,R} : C^{0,\beta,\gamma}_R \rightarrow C_R^{2,\beta,\gamma}  
\ee
       by 
\be\label{eq-def-H0}
H_{m,R}(g):=   \sum_{j=0}^\infty  w_j(s) \varphi_j(\theta),
\ee  
Here, for a given $g\in C^{0,\beta,\gamma}_R$,  $w_j$'s  are defined as in \eqref{eq-wj}-\eqref{eq-wj2}. 
A convenient version of  Lemma \ref{lem-ex-linear} which we will use throughout this section is the following. 
We first define a   constant $\delta_0 = \delta_0(\alpha,h)>0$ by
 \be\label{def-del0}
\delta_0 := \min \{\sigma-\beta^+_{K-1}\} \cup \{\beta^+_{m}-\beta^+_{m-1}\,:\,  1 \leq m\leq  K-1 ,\,\,\beta^+_{m}-\beta^+_{m-1}>0 \}  \cup \{\beta^+_0 - \beta^-_0 \}.
\ee 
 Here, the nonzero condition $\beta^+_{m}-\beta^+_{m-1}\neq 0$ is due to possible multiplicity of $\beta^+_m$. 
 \begin{corollary}  \label{cor-ex-linear}   
For any $\delta\in (0,\delta_0)$, there is  a constant  $C=C(\alpha,h,\delta,\beta)>0$  such that  
\bea \label{eq-w-est'}
 \Vert H_{m,R}(g)\Vert_{C_R^{2,\beta,\beta^+_m -\delta }} \le C \Vert g \Vert_{C_R^{0,\beta,\beta^+_m -\delta } } 
 \eea   and 
 \bea 
 \Vert H_{K,R}(g)\Vert_{C_R^{2,\beta,\sigma  -\delta }} \le C \Vert g \Vert_{C_R^{0,\beta,\sigma  -\delta } } 
 \eea  for all $m \in \{0,\ldots, K-1\}$ and      $R\in\mathbb{R}$. 

 \end{corollary}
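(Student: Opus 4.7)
The corollary is a direct specialization of Lemma \ref{lem-ex-linear} with two specific choices of the weight $\gamma$. The plan is to verify, in each case, that $\gamma$ lies in an open interval $(\beta^+_{j-1}, \beta^+_j)$ of consecutive critical exponents matching the split index used in the definition of the relevant operator $H_{\cdot, R}$, and then invoke the lemma verbatim.

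For the first inequality, fix $m \in \{0, 1, \ldots, K-1\}$ and set $\gamma := \beta^+_m - \delta$. The upper bound $\gamma < \beta^+_m$ is immediate. For the lower bound $\gamma > \beta^+_{m-1}$ (with the convention $\beta^+_{-1} := \beta^-_0$), observe that whenever the gap $\beta^+_m - \beta^+_{m-1}$ is strictly positive (or $\beta^+_0 - \beta^-_0 = 1-4\alpha > 0$ when $m=0$) it appears in the minimum defining $\delta_0$, so $\delta < \delta_0 \leq \beta^+_m - \beta^+_{m-1}$. Lemma \ref{lem-ex-linear} applied with this index $m$ and weight $\gamma$ produces the solution given by the split formulas \eqref{eq-wj}-\eqref{eq-wj2}, which is by definition $H_{m, R}(g)$, together with the required weighted bound.

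For the second inequality, set $\gamma := \sigma - \delta$ and apply Lemma \ref{lem-ex-linear} with split index $K$. By the definition of $K$, one has $\beta^+_{K-1} < \sigma \leq \beta^+_K$, and the constraint $\delta < \delta_0 \leq \sigma - \beta^+_{K-1}$ places $\gamma$ in $(\beta^+_{K-1}, \beta^+_K)$. The lemma applies directly and yields $H_{K, R}(g)$ with the required estimate.

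The essential content of the proof is thus just the bookkeeping of gaps captured by the definition of $\delta_0$; the main (mild) obstacle is tracking the $\delta$-dependence of the constant $C=C(\alpha, h, \delta, \beta)$. An inspection of Claim \ref{cla-L-infty-ext} shows that $C$ enters through reciprocal gaps of the form $(\gamma - \beta^-_j)^{-1}$ and $(\beta^+_j - \gamma)^{-1}$ (shifted by an auxiliary small parameter), which remain uniformly bounded in $R$ as long as $\delta$ is bounded away from both $0$ and $\delta_0$, giving a finite $C$ independent of $R$ for each fixed $\delta \in (0, \delta_0)$.
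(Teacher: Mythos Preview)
Your approach is exactly what the paper intends: the corollary is stated without proof as a direct specialization of Lemma \ref{lem-ex-linear}, and checking that the chosen weight lands in the appropriate gap is the whole content. Your handling of the $H_{K,R}$ estimate is complete and correct.

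For the first estimate, however, you only verify $\gamma > \beta^+_{m-1}$ ``whenever the gap $\beta^+_m - \beta^+_{m-1}$ is strictly positive,'' leaving the degenerate case unaddressed. This is not hypothetical: since $\lambda_1=\lambda_2=0$ one always has $\beta^+_1=\beta^+_2$, so $m=2$ is degenerate; in the radial case $\lambda_{2k-1}=\lambda_{2k}$ for every $k\ge 1$, so every even $m\le K-1$ is degenerate. When $\beta^+_{m-1}=\beta^+_m$, the weight $\gamma=\beta^+_m-\delta$ falls strictly below $\beta^+_{m-1}$ and Lemma \ref{lem-ex-linear} cannot be invoked with split index $m$. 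In fact the bound fails outright: with $g(s,\theta)=e^{(\beta^+_m-\delta)s}\varphi_{m-1}(\theta)$, formula \eqref{eq-wj2} yields $w_{m-1}(s)\sim c\,e^{\beta^+_{m-1}s}=c\,e^{\beta^+_m s}$ as $s\to\infty$ for some $c\ne 0$, so $H_{m,R}(g)\notin C^{2,\beta,\beta^+_m-\delta}_R$. The natural repair, which the later constructions effectively rely on, is to replace $m$ by the smallest index $m'$ with $\beta^+_{m'}=\beta^+_m$ and quote Lemma \ref{lem-ex-linear} at that index; the weight $\beta^+_m-\delta=\beta^+_{m'}-\delta$ then lands in $(\beta^+_{m'-1},\beta^+_{m'})$ by the definition of $\delta_0$, and one obtains the desired estimate for $H_{m',R}$.
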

 
 \medskip
 The term $E(w)$ in \eqref{eq-w29}  is the remainder when the translator or blow-down equation is linearized around the homogeneous profile.  The nonlinear term $E(w)$ basically consists of quadratic terms and exponentially decaying terms. The following lemma makes this assertion precise.

\begin{lemma} [Error estimate] \label{lem-error-estimate} Let $E=-E_1-\eta E_2$ be the remainder terms   in    \eqref{eq-w29} with $\eta\in \{0,1\}$ (see also Lemma \ref{lem-w-eq}). 
Then there is  a small  constant  $c_0=c_0(\alpha,h,\beta)>0$   so that 
  if functions $w(s,\theta)$,   $u(s,\theta)$ and $w_i(s,\theta)$ ($i=1,2$) satisfy 
\be \label{eq-lem-error-esti1}
  \Vert w\Vert_{C^{2,\beta}_{\tau ,1}},\,\,  \Vert u\Vert_{C^{2,\beta}_{\tau ,1}}, \,\,  \Vert w_i\Vert_{C^{2,\beta}_{\tau,1}}\, \le\, c_0 e^{\sigma \tau}   
  \ee
 for some $0<\beta<1$ and $\tau\ge0$, then 
\bea \label{eq-est-error-diff1} 
e^{-\sigma \tau} \Vert E(w+u)-E(w)\Vert_{C^{0,\beta}_{\tau,1}} \le C  \left(\eta e^{-4\alpha \tau }+e^{-\sigma \tau }\Vert w\Vert_{C^{2,\beta}_{\tau,1}} +e^{-\sigma \tau }\Vert u\Vert_{C^{2,\beta}_{\tau,1}} \right)e^{-\sigma \tau}\Vert u\Vert_{C^{2,\beta}_{\tau,1}} ,
\eea 
\bea \label{eq-est-error-quadratic}
 e^{-\sigma \tau}\Vert E(w)\Vert_{C^{0,\beta}_{\tau,1}} \le C  \left[\eta e^{-4\alpha \tau }+ \left( e^{-\sigma \tau }\Vert w\Vert_{C^{2,\beta}_{\tau,1}}\right) ^2   \right] ,
 \eea
and 
\bea \label{eq-est-error-diff2}
e^{-\sigma \tau}\Vert (E(w_1+u ) - E(w_1)) - (E(w_2+u) -E(w_2))\Vert_{C^{0,\beta}_{\tau,1}} \le C e^{-2\sigma \tau } \Vert w_1-w_2\Vert _{C^{2,\beta}_{\tau,1}} \Vert u\Vert_{C^{2,\beta}_{\tau,1}}, 
 \eea 
for some  $C=C( \alpha,h, \beta)<\infty $.
\begin{proof} 
To denote the full dependency of $E(w)$ on all derivatives of $w$, we write $E(v)$ as $E(v)(s,\theta  )=E(D^2 v(s,\theta ),\nabla v(s,\theta ), v(s,\theta), s,\theta )$ in the proof. 
At $(s,\theta )\in \mathbb{R} \times \mathbb{S}^1  $,  
\bea  \label{eq-integralrep1}
E(w+u)-E(w) = \int _0^1 \left\langle (DE)_{w+tu},  (D^2 u,\nabla u , u, 0,0)\right\rangle\,dt     .
\eea 
Here, $(DE)_{w+tu}$ denotes the gradient of $E$ (in five arguments) evaluated at $(  D^2 (w+tu) ,\nabla  (w+tu), w+tu,s,\theta  )$ and the derivatives of $u$ and $w+tu$ are evaluated at $(s,\theta)$. Similarly, it holds that  at $(s,\theta )\in \mathbb{R} \times \mathbb{S}^1  $,
\bea \label{eq-integralrep2} 
&\left[E(w_1+u ) - E(w_1)\right] - \left[E(w_2+u) -E(w_2)\right]  \\
& = \int_0^1 \int_0^1 ( D^2 E)_{w_2+tu+r(w_1-w_2)}[(D^2 (w_1-w_2),\nabla (w_1-w_2) , w_1-w_2, 0,0),(D^2 u,\nabla u , u, 0,0)]\, dt dr.
 \eea   
 \smallskip

We observe that  $e^{-\sigma s}E_1(w)$ is at least quadratic in the derivatives of $e^{-\sigma s}w$. If $\Vert w\Vert_{C^{2,\beta}_{\tau ,1}} $, $\Vert w_i\Vert _{C^{2,\beta}_{\tau ,1}}$, $\Vert u\Vert _{C^{2,\beta}_{\tau ,1}} \le c_0  e^{\tau s}$ 
for some sufficiently small $c_0>0$, then there exists $C=C(\alpha,h,\beta)$ (independent of $\tau\geq 0$ and $c_0>0$) such that for all $t \in [0,1]$,
\bea 
e^{-\sigma \tau }\Vert (DE_1)_{w+tu}\Vert_{C^{0,\beta}_{\tau ,1} }\le  C e^{-\sigma \tau } \left(e^{-\sigma \tau } \Vert w\Vert _{C^{2,\beta}_{\tau ,1}}+e^{-\sigma \tau } \Vert u\Vert _{C^{2,\beta}_{\tau ,1}}\right)  ,
\eea
 and for all $t , s\in[0,1]$, 
\bea e^{-\sigma \tau }  \Vert (D^2 E_1)_{w_2+tu+s(w_1-w_2)}\Vert_{C^{0,\beta}_{\tau ,1} }\le Ce^{-2\sigma \tau } .
\eea
  Similarly, we get that for all $t \in [0,1]$,
\be
  e^{-\sigma \tau }\Vert (DE_2)_{w+tu}\Vert_{C^{0,\beta}_{\tau ,1} }\le  C  e^{-(\sigma +4\alpha)\tau },\qquad   e^{-\sigma \tau }  \Vert (D^2 E_2)_{w_2+tu+s(w_1-w_2)}\Vert_{C^{0,\beta}_{\tau ,1} }\le C   e^{-{ (2\sigma+ 4\alpha)}\tau } .
\ee
Here,  a  sufficiently small  $c_0>0$ is chosen  to control nonlinear terms   involving $ (h+{ e^{-\sigma s}}{w_s})$ in  $E $   for the estimates above. 
The first and third estimates are direct consequences of these bounds and the integral representations \eqref{eq-integralrep1} and \eqref{eq-integralrep2}. The second estimate follows from the first one since  \be
e^{-\sigma \tau } \Vert E(0)\Vert _{C^{0,\beta}_{\tau,1}} \le C\eta e^{-4\alpha \tau} .
\ee
This finishes the proof.
\end{proof}
\end{lemma}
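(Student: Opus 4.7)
My plan is to reduce all three estimates to pointwise-in-$t$ bounds on the first and second derivatives $DE_j$, $D^2 E_j$ ($j=1,2$) evaluated along line segments connecting the relevant arguments, and then invoke the fundamental theorem of calculus. For \eqref{eq-est-error-diff1} I would write
\begin{equation*}
E(w+u) - E(w) = \int_0^1 \left\langle (DE)_{w+tu},\, (D^2 u, \nabla u, u, 0, 0) \right\rangle dt,
\end{equation*}
where $(DE)_v$ denotes the partial gradient of $E$ in its first three slots evaluated at $(D^2 v, \nabla v, v, s, \theta)$, take the $C^{0,\beta}_{\tau,1}$-norm, and apply the standard Hölder product inequality. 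The double-difference estimate \eqref{eq-est-error-diff2} follows by an analogous two-parameter interpolation producing a double integral of $(D^2 E)_v$, contracted against $(D^2(w_1-w_2), \nabla(w_1-w_2), w_1-w_2, 0, 0)$ and $(D^2 u, \nabla u, u, 0, 0)$.

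The estimate \eqref{eq-est-error-quadratic} on $E(w)$ itself is then obtained from \eqref{eq-est-error-diff1} applied with $w \mapsto 0$ and $u \mapsto w$, combined with the direct bound on $E(0) = -\eta E_2(0)$. Since $E_1$ is at least quadratic in $e^{-\sigma s} w$ and its derivatives (no constant or linear terms survive when the full equation \eqref{Se} is linearized around $\sigma^{-1} e^{\sigma s} h$), one has $E_1(0) = 0$; meanwhile $E_2(0)$ is precisely the discrepancy between the translator equation and the blow-down equation evaluated at $\sigma^{-1} e^{\sigma s} h$, and a direct computation shows it is $O(e^{(\sigma - 4\alpha)\tau})$ in the weighted norm.

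It remains to establish the structural estimates
\begin{equation*}
e^{-\sigma\tau} \Vert (DE_1)_v \Vert_{C^{0,\beta}_{\tau,1}} \le C e^{-2\sigma\tau} \Vert v \Vert_{C^{2,\beta}_{\tau,1}}, \qquad e^{-\sigma\tau} \Vert (D^2 E_1)_v \Vert_{C^{0,\beta}_{\tau,1}} \le C e^{-2\sigma\tau},
\end{equation*}
\begin{equation*}
e^{-\sigma\tau} \Vert (DE_2)_v \Vert_{C^{0,\beta}_{\tau,1}} \le C e^{-(\sigma + 4\alpha)\tau}, \qquad e^{-\sigma\tau} \Vert (D^2 E_2)_v \Vert_{C^{0,\beta}_{\tau,1}} \le C e^{-(2\sigma + 4\alpha)\tau},
\end{equation*}
valid for $v$ satisfying the smallness hypothesis \eqref{eq-lem-error-esti1}. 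These follow from the explicit formulas for $E_1, E_2$ furnished by Lemma \ref{lem-w-eq}: when the quantities are expressed in the rescaled variable $e^{-\sigma s} w$ and its derivatives, $E_1$ is a rational expression whose Taylor expansion around $w=0$ starts with quadratic terms, giving $DE_1$ linear in $v$ and $D^2 E_1$ uniformly bounded; whereas $E_2$ carries an additional factor of $S_l^{-2} \sim e^{-4\alpha s}$ arising from expanding the nonlinearity $(\eta + S_l^{-2})^{1/(2\alpha)-2}$ at $\eta = 0$. Substituting these four bounds into the integral representations above and invoking the smallness hypothesis yields the three desired estimates after routine algebra.

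The main obstacle is the calibration of $c_0$: one must ensure that along the entire interpolation segment between $w$ and $w + u$ (or between $w_1$ and $w_2$), the quantity $h + e^{-\sigma s} w_s$ and the other denominators arising in $E_1, E_2$ remain bounded away from zero, so that the non-polynomial factor $(\eta + S_l^{-2})^{1/(2\alpha)-2}$ is smooth and uniformly controlled on this segment. This forces $c_0$ to be small relative to $\min_\theta h$, hence the dependence $c_0 = c_0(\alpha, h, \beta)$. Once positivity is secured, all subsequent estimates reduce to standard Hölder product and composition inequalities on $C^{0,\beta}_{\tau,1}$, and the decay in $\tau$ is dictated entirely by the leading homogeneity of each term.
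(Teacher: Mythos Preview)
Your proposal is correct and follows essentially the same approach as the paper: the single- and double-integral representations via the fundamental theorem of calculus, the structural bounds on $DE_j$ and $D^2E_j$ for $j=1,2$, the derivation of \eqref{eq-est-error-quadratic} from \eqref{eq-est-error-diff1} together with the bound on $E(0)=-\eta E_2(0)$, and the role of $c_0$ in keeping $h+e^{-\sigma s}w_s$ bounded away from zero all match the paper's argument line for line. One minor slip: the extra $e^{-4\alpha s}$ decay in $E_2$ comes from the factor $\eta S_l^{2}\sim e^{-4\alpha s}$ (not $S_l^{-2}$) when one factors $(\eta+S_l^{-2})^{1/(2\alpha)-2}=S_l^{4-1/\alpha}(1+\eta S_l^2)^{1/(2\alpha)-2}$, since $S_l\sim e^{-2\alpha s}h$.
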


Now we introduce  effective coordinates  which are designed to    obtain  the quantitative estimate of translators. 

 \begin{definition}[Effective coordinates] \label{def-b(a)}
We define a transformation    $\mathbf{b} : \mathbb{R}^K \rightarrow \mathbb{R}^K$  which maps $\mathbf{a}=(a_0,\ldots, a_{K-1})$ to $\mathbf{b}(\mathbf{a})=(b_{0},\ldots, b_{K-1})$ with  $b_i=\textrm{sgn}(a_i)|a_i|^{1/({\sigma-\beta^+_i})}$. 
\end{definition}
Later,  the 
translators $\Sigma_{\mathbf{a}}$  will be constructed so that they satisfy uniform estimates on a region $s\ge  R_{\rho,{\mathbf{a}} }$, where  we denote  
\be 
R_{\rho,{\mathbf{a}} }:=\rho + \ln (|\mathbf{b}(\mathbf{a})|+1)
\ee
for    $0<\rho<\infty$ and ${\mathbf{a}}\in \mathbb{R}^K$; see Corollary \ref{cor-wexpression} for instance. 
\medskip

 Next,  let us fix the constants 
 \be \label{eq-deltagap}
 \qquad \sgamma  :=  \sigma -\delta \qquad \hbox{with\,\, $\delta :  =  \frac12 { \min   (  \delta_0 ,4\alpha  )} >0$},
 \ee 
 where $\delta_0>0$ is  the constant defined by  \eqref{def-del0}.  

\begin{theorem} [Existence on exterior domains]\label{thm-existencerevised11} There exist a constant $0<\rho_1<\infty$ and  a $K$-parameter family of translators defined outside of compact sets, which are represented by  functions {$\tilde  w _{\mathbf{a}} $ in $ C^{2,{ \frac12},\sgamma}_{ R_{\rho_1,\mathbf{a}}}$} 
for $\mathbf{a}\in \mathbb{R}^K$.   
  Here, $ R_{\rho_1,\mathbf{a}} := \rho_1+ \ln (\vert \mathbf{b}(\mathbf{a})\vert+1) . $  
\smallskip

More precisely,   the constructed solutions  $\tilde w_{\mathbf{a}}$ satisfy the following  estimates with  some constant $0<c_1<\infty $  and a family of   functions   {$\tilde  g _{\mathbf{a}} $ in $ C^{2,{ \frac12},\sgamma}_{ R_{\rho_1,\mathbf{a}}}$}  
for $\mathbf{a} \in \mathbb{R}^K$:  
\begin{enumerate}[(a)]
\item  For each $\mathbf{a}\in \mathbb{R}^K$,
\be \label{eq-448bound}
\Vert \tilde w_{\mathbf{a}} \Vert_{C^{2,{\beta},  \sgamma  }_{ R_{\rho_1,\mathbf{a}}} } \le  2c_1K e^{(\sigma -  \sgamma  ) R_{\rho_1,\mathbf{a}}}    <   e^{(\sigma -  \sgamma  ) R_{\rho_1,\mathbf{a}}}\qquad  \left(\beta=  \tfrac{1}{3}, \tfrac{1}{2}\right), 
\ee
with the constant   $\sgamma $  in \eqref{eq-deltagap}. 
\item  For any two vectors $\mathbf{a}=(\mathbf{0}_{j+1}, \mathbf{a}_{K-j-1})$ and $\mathbf{a}'=(\mathbf{0}_{j}, a_j, \mathbf{a}_{K-j-1}) $ with   $ \mathbf{a}_{K-j-1}\in \mathbb{R}^{K-j-1}$ and $a_j\not=0$,
  \be \label{eq-wdiff}
\tilde w_{\mathbf{ a}'}-\tilde w_{\mathbf{a}}= a_j   \left( e^{\beta^+_{j}s} \varphi_{j}(\theta) + {  g_\mathbf{  a'}}(s,\theta)\right) \quad \hbox{ with }\,\,\Vert {  g_\mathbf{  a'}} \Vert_{C^{2,{ \beta},\beta^+_{j}-\delta  }_{ R_{\rho_1,\mathbf{  a'}}} } \le   e^{\delta  { R_{\rho_1,\mathbf{  a'}}}}    \qquad\left(\beta=  \tfrac{1}{3}, \tfrac{1}{2}\right),
\ee
  with    the constant $\delta>0$ in \eqref{eq-deltagap}. 
\end{enumerate}
Here, positive   constants $\rho_1$ and $c_1$  depend only on $\alpha$ and  $h$. 
\end{theorem}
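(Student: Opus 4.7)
The plan is a contraction-mapping argument in weighted Hölder spaces, set up hierarchically in the parameter $\mathbf{a}$, with the threshold $R_{\rho_1,\mathbf{a}}$ and weight $\sgamma = \sigma - \delta$ chosen so that nonlinear corrections stay uniformly small in $\mathbf{a}$. For given $\mathbf{a} \in \mathbb{R}^K$, I would introduce the slow-mode Jacobi field ansatz
$$u_\mathbf{a}(s,\theta) := \sum_{j=0}^{K-1} a_j\, e^{\beta^+_j s}\,\varphi_j(\theta),$$
which satisfies $\hat L u_\mathbf{a} = 0$ by construction, and look for $\tilde w_\mathbf{a} = u_\mathbf{a} + v_\mathbf{a}$ with a correction $v_\mathbf{a}$ vanishing at $s = R_{\rho_1,\mathbf{a}}$. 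By \eqref{eq-w29} the correction must satisfy $\hat L v_\mathbf{a} = E(u_\mathbf{a}+v_\mathbf{a})$, which by Corollary \ref{cor-ex-linear} is equivalent to the fixed-point problem $v_\mathbf{a} = T_\mathbf{a}(v_\mathbf{a})$ with $T_\mathbf{a}(v) := H_{K, R_{\rho_1,\mathbf{a}}}\bigl(E(u_\mathbf{a}+v)\bigr)$ on $C^{2,\beta,\sgamma}_{R_{\rho_1,\mathbf{a}}}$.

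The reason $R_{\rho,\mathbf{a}} = \rho + \ln(|\mathbf{b}(\mathbf{a})|+1)$ is the correct threshold is that each ansatz mode satisfies $|a_j|e^{\beta^+_j s} = |b_j|^{\sigma-\beta^+_j} e^{\beta^+_j s} \leq e^{\sigma s}$ precisely when $s \geq \ln|b_j|$, so at any $s \geq R_{\rho_1,\mathbf{a}}$ with $\rho_1$ large, the ansatz lies below the smallness threshold of Lemma \ref{lem-error-estimate} uniformly in $\mathbf{a}$, and a direct estimate yields $\|u_\mathbf{a}\|_{C^{2,\beta,\sgamma}_{R_{\rho_1,\mathbf{a}}}} \leq c_1 K e^{(\sigma-\sgamma)R_{\rho_1,\mathbf{a}}}$ for some $c_1 = c_1(\alpha,h,\rho_1)$. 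I would run the fixed point on the ball $B_\mathbf{a} := \{v : \|v\|_{C^{2,\beta,\sgamma}} \leq c_1 K e^{(\sigma-\sgamma)R_{\rho_1,\mathbf{a}}}\}$. Combining \eqref{eq-est-error-quadratic} with Corollary \ref{cor-ex-linear}, and using that $\delta \leq 2\alpha$ by \eqref{eq-deltagap}, bounds $\|T_\mathbf{a}(v)\|$ by a constant multiple of $(\eta\, e^{-4\alpha R_{\rho_1,\mathbf{a}}} + c_1^2 K^2)\, e^{(\sigma-\sgamma)R_{\rho_1,\mathbf{a}}}$; choosing $c_1$ sufficiently small and $\rho_1$ sufficiently large makes $T_\mathbf{a}$ a self-map of $B_\mathbf{a}$, while \eqref{eq-est-error-diff1} supplies the contraction property. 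The unique fixed point yields $\tilde w_\mathbf{a}$ satisfying (a).

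For (b), the hierarchical index choice gives $u_{\mathbf{a}'} - u_\mathbf{a} = a_j e^{\beta^+_j s}\varphi_j$ exactly, so setting $g_{\mathbf{a}'} := a_j^{-1}(v_{\mathbf{a}'} - v_\mathbf{a})$ on the common domain $s \geq R_{\rho_1,\mathbf{a}'}$, one finds $\hat L g_{\mathbf{a}'} = a_j^{-1}\bigl(E(\tilde w_{\mathbf{a}'}) - E(\tilde w_\mathbf{a})\bigr)$. Applying \eqref{eq-est-error-diff1} with $w = \tilde w_\mathbf{a}$ and $u = \tilde w_{\mathbf{a}'} - \tilde w_\mathbf{a} = a_j(e^{\beta^+_j s}\varphi_j + g_{\mathbf{a}'})$, and using (a) together with $|a_j|e^{\beta^+_j s} \lesssim e^{(\beta^+_j-\sigma)\rho_1}e^{\sigma s}$ on $s \geq R_{\rho_1,\mathbf{a}'}$, produces a right-hand side whose $C^{0,\beta,\beta^+_j-\delta}_{R_{\rho_1,\mathbf{a}'}}$ norm is $O(e^{-\delta\rho_1})$. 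Inverting by $H_{j, R_{\rho_1,\mathbf{a}'}}$ at that weight, which is admissible since $\beta^+_j - \delta \in (\beta^+_{j-1}, \beta^+_j)$ by \eqref{def-del0}, and running a second contraction of the same type produces $g_{\mathbf{a}'}$ with the stated $C^{2,\beta,\beta^+_j-\delta}$ bound. The mild boundary mismatch (since $v_\mathbf{a}$ need not vanish at $s = R_{\rho_1,\mathbf{a}'}$) is absorbed into a homogeneous Jacobi contribution in the same weighted class, which decays at least as fast as $g_{\mathbf{a}'}$ itself.

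The main obstacle is keeping all estimates uniform in $\mathbf{a}$ across $\mathbb{R}^K$. The definition $b_i = \mathrm{sgn}(a_i)|a_i|^{1/(\sigma-\beta^+_i)}$ is engineered precisely so that every slow Jacobi mode simultaneously attains the universal scale $e^{\sigma s}$ at $s = \ln|\mathbf{b}(\mathbf{a})|$, allowing a single logarithmic shift of the exterior threshold to make Lemma \ref{lem-error-estimate} applicable on the whole region for every $\mathbf{a}$ at once. A secondary delicate point is that the difference estimate in (b) must be recorded in a strictly stronger weight $\beta^+_j - \delta$ than the $\sgamma$ used in (a); this forces the use of $H_{j,R}$ rather than $H_{K,R}$ (discarding the modes with indices $<j$), which is precisely why (b) is stated only for the hierarchical configuration in which the first $j$ components of $\mathbf{a}$ vanish.
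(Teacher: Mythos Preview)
Your approach to part (a) is sound and is in fact a reasonable alternative to the paper's: you build the full Jacobi ansatz $u_\mathbf{a}=\sum_j a_j e^{\beta^+_j s}\varphi_j$ and run a single contraction with $H_{K,R_{\rho_1,\mathbf{a}}}$, whereas the paper constructs $\tilde w_{\mathbf{a}}$ \emph{hierarchically}, first producing a reference solution $\tilde w_{\mathbf 0}$ and then, one index at a time, defining $\tilde w_{\mathbf{a}'}$ from $\tilde w_{\mathbf{a}}$ via the iteration $u_0=a_j e^{\beta^+_j s}\varphi_j$, $u_{k+1}=H_{j,R_{\rho_1,\mathbf{a}'}}\bigl(E(\tilde w_{\mathbf a}+\sum_{i\le k}u_i)-E(\tilde w_{\mathbf a}+\sum_{i<k}u_i)\bigr)$. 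For (a) alone, your route is slightly more direct.

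The gap is in (b). In your scheme $g_{\mathbf{a}'}=a_j^{-1}(v_{\mathbf{a}'}-v_{\mathbf{a}})$ is already determined once $v_{\mathbf{a}}$ and $v_{\mathbf{a}'}$ are, and both were produced by $H_{K,\cdot}$. But $H_{K,R}$ is only bounded $C^{0,\beta,\gamma}\to C^{2,\beta,\gamma}$ for weights $\gamma\in(\beta^+_{K-1},\beta^+_K)$ (Lemma~\ref{lem-ex-linear}); the weight $\beta^+_j-\delta$ required in (b) lies strictly below $\beta^+_{K-1}$ whenever $j<K-1$, so you cannot invert there. Concretely, $v_{\mathbf{a}}$ and $v_{\mathbf{a}'}$ generically carry nonzero homogeneous contributions in the slow modes $e^{\beta^+_i s}\varphi_i$ for $j\le i\le K-1$ (these are compatible with the $\sgamma$-weight used in your contraction), and there is no mechanism in your construction forcing these to cancel in the difference. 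Your proposed fix---``invert by $H_{j,R_{\rho_1,\mathbf{a}'}}$ and run a second contraction''---would manufacture a \emph{different} exterior solution, not the $v_{\mathbf{a}'}$ you already built with $H_{K,\cdot}$; there is no uniqueness available to identify the two (the boundary conditions differ), so the family would not be well defined. The ``mild boundary mismatch'' you mention is not mild for the same reason: the homogeneous correction needed to match boundary data can contain modes $e^{\beta^+_i s}\varphi_i$ with $i\ge j$, which violate the $\beta^+_j-\delta$ weight.

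This is exactly why the paper's construction is hierarchical rather than all-at-once: by \emph{defining} $\tilde w_{\mathbf{a}'}:=\tilde w_{\mathbf{a}}+\sum_{i\ge 0}u_i$ with the $u_i$ obtained through $H_{j,R_{\rho_1,\mathbf{a}'}}$, the corrections $u_1,u_2,\ldots$ lie in $C^{2,\beta,\beta^+_j-\delta}$ from the outset, so $g_{\mathbf{a}'}=a_j^{-1}\sum_{i\ge 1}u_i$ has the claimed decay by construction, and (a) is then recovered by summing the step bounds (the $2c_1(K-j)$ in \eqref{eq-448boundimp}). If you want to keep your all-at-once viewpoint, the cleanest repair is simply to adopt the hierarchical definition itself as the construction.
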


\begin{remark}\label{rem-two-exp}
Theorem \ref{thm-existencerevised11} implies that  $\Vert \tilde w_{\mathbf{a}} \Vert _{C^{2,{ \beta },\sigma}_{ R_{\rho_1,\mathbf{a}}} }\le 2c_1K<1 $ and  $\Vert {g}_{\mathbf{a}'} \Vert_{C^{2,{ \beta},\beta^+_{j}  }_{ R_{\rho_1,\mathbf{  a}'}} } \le   1 $. 
Next,  we should mention  that   the H\"older exponents are fixed  at $\beta=1/2$ or $1/3$   for the sake of notational simplicity.
The reason why we introduce two H\"older exponents and ensure the estimates   hold uniformly for both exponents 
will become apparent in Proposition \ref{thm-modulicont-barr}. In the proposition, we will show the continuity of barriers in a weighted $C^{2,\frac{1}{3}}$-norm on exterior regions. When proving it,  the  following  interpolation inequality is used: 
\begin{equation*}
\Vert w \Vert_{C^{2,{\frac{1}{3}}}_{s,1}} \le C \Vert w \Vert_{L^2_{s,1}}^{\vartheta} \Vert w \Vert^{1-\vartheta}_{C^{2,{\frac{1}{2}}}_{s,1}}
\end{equation*}
in order to bypass a 
technical difficulty. Apart from this, it can be regarded that we  construct a solution in the H\"older space with the higher exponent $1/2$.   
\end{remark}

\medskip

\begin{proof} [Proof of Theorem \ref{thm-existencerevised11}]

We will construct $\tilde w_{\mathbf{a}}$ {in $  C^{2,{ \frac12},\sgamma}_{ R_{\rho_1,\mathbf{a}}}$}  for    $\mathbf{a}\in \mathbb{R}^K$  of the  form  $\mathbf{a}=(\mathbf{0}_{j+1}, \mathbf{a}_{K-j-1})\in\mathbb{R}^K$ with  a fixed $j$ which runs  backward  from $j=K-1$ to $j=0$, so that the resulting solutions satisfy    a slightly better bound than \eqref{eq-448bound}:
\be \label{eq-448boundimp}
\Vert \tilde w_{\mathbf{a}} \Vert_{C^{2,{ \beta},  \sgamma  }_{ R_{\rho_1,\mathbf{a}}} } \le  2c_1(K-j) e^{(\sigma -  \sgamma  ) R_{\rho_1,\mathbf{a}}} .  
\ee

 \smallskip
 
    In order to    choose appropriate constants $\rho_1$ and $c_1$,  {let us select a small   constant  $c_0>0$  from \eqref{eq-lem-error-esti1} in Lemma \ref{lem-error-estimate}  which  is     satisfied  with   both $\beta=\frac{1}{3},\frac12$.}  
    Let  $C_0$  be the constant from  the invertibility estimates of Jacobi operator in Corollary \ref{cor-ex-linear},
 $ C_1$ be 
the constant from the error estimates in Lemma \ref{lem-error-estimate},  and $C_2$ be a constant such that
\bea \label{eq-c2}
\Vert   \varphi_j e^{\beta^+_j s} \Vert_{ C^{2, { \beta},\beta^+_j}_0} \le C_2\qquad \hbox{for all  $  j\in \{1,\ldots,   K-1 \} $  }.\eea 
  Here and below, the constants appearing in uniform  estimates with the H\"older norm    are chosen to  satisfy those estimates  for    both the exponents $\beta=\frac{1}{3},\frac12$. Moreover, 
  we  shall often   abbreviate   estimates with   ``$\|\cdot\|_{k,\beta, \gamma}$ satisfied for both $\beta=\frac{1}{3},\frac12$ " to  those only with ``$\|\cdot\|_{k,\beta, \gamma}$", omitting which $\beta$'s are considered. 
We also notice that $\Vert E(0) \Vert_{C_{0}^{0,{ \beta}, \sgamma }}<\infty$  from Lemma \ref{lem-error-estimate}. Throughout the proof,  we fix   a small  constant $c_1$ such that
\be\label{eq-c-prefix}
 c_1  C_0  C_1(2K+ 5) (1+C_2)\le 1/2 , \quad\hbox{and}\quad 2c_1K\le c_0,
 \ee
  and then fix  a large positive constant $\rho_1$ satisfying 
   \be \label{eq-R0-prefix}
   e^{(\sgamma-\sigma)\rho_1}  C_0 \Vert E(0) \Vert_{C^{0,{ \beta},\sgamma }_{0} } \le c_1 , \quad e^{-4\alpha \rho_1}\le c_1 , \quad\hbox{and}\quad  e^{(\beta^+_{K-1} -\sigma)\rho_1}C_2 \le c_1. 
\ee 
for    $\beta=\frac{1}{3},\frac12$.Later, $c_1$ may be replaced by smaller ones as shown in \eqref{eq-conditioneee} and the explanation following \eqref{eq-barww0}, and $\rho_1$ may be replaced by a larger one as per \eqref{eq-conditioneee}. Here, we do not impose such conditions to avoid extra complications.
\medskip


(i)   First, we construct a reference translator   $\tilde w_{\mathbf{0}}$ by  a fixed point argument with the operator $H_{K,\rho_1}$  and the zero function for  the initial iteration.  
 We inductively define 
\bea \label{eq-u1-00}
u_0:=H_{K,{ \rho_1}}\Big(E(0)\Big) ,\qquad u_1:=H_{K,{ \rho_1}}\Big(E( u_0)- E(0)\Big),         
\eea 
and  
\bea \label{eq-uk0}
u_{k+1}:= H_{K,{ \rho_1}} \left(E\Big( \sum_{i=0}^{k} u_i \Big)-E\Big ( \sum_{i=0}^{k-1} u_i \Big ) \right)\quad \hbox{for $k\ge1$ }.
\eea 
 (To define $u_k$ as above,  we need to check that $E\Big( \sum_{i=0}^{k} u_i \Big)-E\Big ( \sum_{i=0}^{k-1} u_i \Big )$ belongs to $C^{0,\frac12,\sgamma}_{\rho_1}$ and this   follows from      \eqref{eq-ind11} below.)
 Note also that the solution given    by  Lemma \ref{lem-ex-linear} has the explicit expression and the solutions are identical for different $\beta$.
Then we will  show that  $\sum_{i=1}^{\infty} u_i$ converges in $C^{2,\beta,\sgamma}_{\rho_1}$ for    $\beta=\frac{1}{3},\frac12$, and $\tilde w_{\mathbf{0}}:=\sum_{i=0}^{\infty} u_i$ solves $\hat L \tilde w_{\mathbf{0}}=E(\tilde w_{\mathbf{0}})$ on $ (\rho_1  ,\infty)\times \mathbb{S}^1 $.   
\smallskip

By Corollary \ref{cor-ex-linear}  and the choice of $\rho_1$, \be
\Vert  u_0 \Vert_{C_{\rho_1}^{2,\beta,\sigma}}\leq e^{(\sgamma-\sigma)\rho_1} \Vert  u_0 \Vert_{C_{\rho_1}^{2,\beta,\sgamma}} \le  e^{(\sgamma-\sigma)\rho_1} C_0  \Vert E(0)\Vert_{C_{\rho_1}^{0,\beta,\sgamma}} \le   c_1 \le c_0  
\ee
 for     $\beta=\frac{1}{3},\frac12$.
Then by Corollary \ref{cor-ex-linear}  and  Lemma \ref{lem-error-estimate}, we have
\bea\label{eq-est-u1-121}
\Vert  u_1 \Vert_{C_{\rho_1}^{2,\beta,\sgamma}} \le C_0 \Vert E(u_0)-E(0) \Vert_{C_{\rho_1}^{0,\beta,\sgamma} } 
 &\leq    C_0   C_1  \left\{  e^{ (\sgamma-\sigma) {\rho_1}} \Vert u_0\Vert_{C_{\rho_1}^{2,\beta,\sgamma}}  +e^{-4\alpha {\rho_1}} \right\} \Vert u_0\Vert_{C_{\rho_1}^{2,\beta,\sgamma}}\\
&\leq 2 C_0 C_1 c_1 \Vert u_0\Vert_{C_{\rho_1}^{2,\beta,\sgamma}}   <\Vert u_0\Vert_{C_{\rho_1}^{2,\beta,\sgamma}}   /2 
\eea
 for     $\beta=\frac{1}{3},\frac12$.
Moreover,  we will prove    by an induction   that  $u_{i}$ given  in \eqref{eq-uk0} satisfy 
\be\label{eq-ind11}
 \Vert u_{i}  \Vert_{C_{\rho_1}^{2,\beta,\sgamma}} \le 2^{-{i}}  \Vert u_0\Vert_{C_{\rho_1}^{2,\beta,\sgamma}}   \qquad \forall i\geq1
\ee 
 for     $\beta=\frac{1}{3},\frac12$.
Note that this, in particular, implies 
\be
 \Vert u_{i}  \Vert_{C_{\rho_1}^{2,\beta,\sigma}} \le e^{(\sgamma -\sigma)  \rho_1}   \Vert u_{i}  \Vert_{C_{\rho_1}^{2,\beta,\sgamma}} \le 2^{-i} e^{(\sgamma -\sigma)  \rho_1}   \Vert u_{0}  \Vert_{C_{\rho_1}^{2,\beta,\sgamma}}   \leq  2^{-{i}}    c_1.  
 \ee  
  Suppose  that  \eqref{eq-ind11} is true for all $1\leq i\leq k$. 
From the assumption,  it follows that  $
  \sum_{i=0}^k \Vert u_{i}  \Vert_{C_{\rho_1}^{2,\beta,\sigma}}     \leq 2c_1\le c_0$  for     $\beta=\frac{1}{3},\frac12$.
Utilizing  Corollary \ref{cor-ex-linear}  and   Lemma \ref{lem-error-estimate}    yields  that 
\be
 \ba
 \Vert u_{k+1} \Vert_{C_{\rho_1}^{2,\beta,\sgamma}} &\le  C _0 \left\Vert  E\Big( \sum_{i=0}^{k} u_i \Big)-E\Big (\sum_{i=0}^{k-1} u_i \Big )  \right\Vert_{C_{\rho_1}^{0,\beta,\sgamma}}  \\ 
 &\le  C_0   C_1  \Big\{  e^{ (\sgamma-\sigma) {\rho_1}}  \sum_{i=0}^{k } \Vert  u_i \Vert_{C_{\rho_1}^{2,\beta,\sgamma}}  +e^{-4\alpha {\rho_1}} \Big\} \Vert u_k\Vert_{C_{\rho_1}^{2,\beta,\sgamma}}\\ 
  &\le  C_0   C_1  (   2c_1 +c_1 ) \Vert u_k\Vert_{C_{\rho_1}^{2,\beta,\sgamma}} <  \Vert u_k\Vert_{C_{\rho_1}^{2,\beta,\sgamma}}/2  
 \ea
\ee  for     $\beta=\frac{1}{3},\frac12$.
Thus, we have proved  \eqref{eq-ind11} since   \eqref{eq-ind11} is true for   $i=1$  by \eqref{eq-est-u1-121}.    Thus    $\tilde w_{\mathbf{0}}= \sum_{i=0}^\infty u_i$ satisfies 
    \be
\Vert \tilde w_{\mathbf{0}} \Vert_{C^{2,\beta,  \sgamma  }_{ R_{\rho_1,\mathbf{0}}} }\le 2 \Vert u_0\Vert_{C_{\rho_1}^{2,\beta,\sgamma}} \le 2  c_1 e^{(\sigma -  \sgamma  )  { R_{\rho_1,\mathbf{0}}}}
\ee for     $\beta=\frac{1}{3},\frac12$, and  $\hat L \tilde w_{\mathbf{0}}=E(\tilde w_{\mathbf{0}})$ on $ (R_{\rho_1,\mathbf{0}}  ,\infty)\times \mathbb{S}^1 $. 
 
 \medskip

 (ii) Next, suppose that $\tilde  w _{\mathbf{a}} \in C^{2,{ \frac12},\sgamma}_{ R_{\rho_1,\mathbf{a}}}$ is a  solution    with some  $\mathbf{a}=(\mathbf{0}_{j+1}, \mathbf{a}_{K-j-1})\in\mathbb{R}^K$ (for some $0\le j\leq K-1$) that satisfies  
\be \label{eq-barwa}
e^{(\sgamma -\sigma) R_{\rho_1,\mathbf{ a}}} \Vert \tilde w_{\mathbf{a}} \Vert _{C^{2,\beta,\sgamma}_{ R_{\rho_1,\mathbf{a}}}}   \le 2(K-j)c_1 
\ee  for     $\beta=\frac{1}{3},\frac12$.
  For a given  $\mathbf{a}'=(\mathbf{0}_{j}, a_j, \mathbf{a}_{K-j-1}) $ with $a_j\not=0$,     it suffices to construct  a solution  $\tilde w_{\mathbf{a}'} \in C^{2,{\frac12} ,\sgamma}_{ R_{\rho_1,\mathbf{a}'}}$ satisfying  the estimate  
\be \label{eq-barwbara}
e^{(\sgamma  -\sigma) R_{\rho_1,\mathbf{   a}' }} \Vert \tilde w_{\mathbf{ a}' } -\tilde w_{\mathbf{ a} }  \Vert _{C^{2,\beta,\sgamma}_{ R_{\rho_1,\mathbf{ a}' }}}   \le 2c_1
\ee 
for     $\beta=\frac{1}{3},\frac12$, and \eqref{eq-wdiff}. 
We define  $\tilde w_{\mathbf{a}'}$ by 
\bea 
\tilde w_{\mathbf{a}'}= \tilde w_{\mathbf{a}} + \sum_{i=0}^{\infty} u_i\qquad \hbox{with\,\, \,$u_0:= a_j \varphi_j e^{\beta^+_j s}$ ,}\eea 
where the  remaining $u_{i}$'s in $  C^{2,{\frac12} ,\sgamma}_{ R_{\rho_1,\mathbf{a}'}}$  are inductively defined by
\bea \label{eq-deftildeu1}
u_1=   H_{j,  R_{\rho_1,\mathbf{a}'}} \Big(E(\tilde w_{\mathbf{a}}+u_0) -E(\tilde w_{\mathbf{a}})\Big), \quad  \quad   
 u_{k+1}:= H_{j,  R_{\rho_1,\mathbf{a}'}} \Big( E(\tilde w_{\mathbf{a}}+\sum_{i=0}^k u_i)-E(\tilde w_{\mathbf{a}}+\sum_{i=0}^{k-1} u_i)  \Big)\eea 
 for $k\ge1$. 
Here,   we conveniently interpret $\tilde w_{\mathbf{a}}$ and $a_j \varphi_j e^{\beta^+_j s}$ as functions defined on $ [ R_{\rho_1,\mathbf{a}'},\infty)\times \mathbb{S}^1$ by restricting their domains since $ R_{\rho_1,\mathbf{a}'} \ge  R_{\rho_1,\mathbf{ a}}$. Then we will show   that 
\be \label{eq-barwa-20}
\Vert u_{1} \Vert_{C^{2,\beta,\beta^+_j-\delta}_{ R_{\rho_1,\mathbf{ a}' }}}   \le [2(1+C_2)]^{-1} e^{\delta R_{\rho_1,\mathbf{ a}' } }\Vert u_{0} \Vert_{C^{2,\beta,\beta^+_j}_{ R_{\rho_1,\mathbf{ a}' }}}  , 
  \ee 
for     $\beta=\frac{1}{3},\frac12$, and by an induction,  
 \be \label{eq-barwa-2'}
  \Vert u_{i+1} \Vert_{C^{2,\beta,\beta^+_j-\delta}_{  R_{\rho_1,\mathbf{ a}' }}}  \le 2^{-1}   \Vert u_{i} \Vert_{C^{2,\beta,\beta^+_j-\delta}_{ R_{\rho_1,\mathbf{ a}' }}} \qquad \forall i\ge1  . 
 \ee
for     $\beta=\frac{1}{3},\frac12$.
 
 To begin with,   by the choice  of $C_2$,  we  have  
\be  \label{eq-u0c00}
  \Vert u_0 \Vert_{C^{2,\beta, \sigma  }_{ R_{\rho_1,\mathbf{ a}' }}} \le e^{(\beta^+_j -\sigma) R_{\rho_1,\mathbf{ a}' } }  \Vert u_0 \Vert_{C^{2,\beta, \beta^+_j }_{ R_{\rho_1,\mathbf{ a}' }}}  
  \le  e^{(\beta^+_j -\sigma)  R_{\rho_1,\mathbf{ a}' } } |a_j|  C _2 
  \ee 
for     $\beta=\frac{1}{3},\frac12$.
Since 
 $  R_{\rho_1,\mathbf{a}'} = \rho_1+ \ln (\vert \mathbf{b}(\mathbf{a}')\vert+1)$, $ |b_j|=|a_j|^{1/({\sigma-\beta^+_j})}$, and   $\beta^+_j\le \beta^+_{K-1}$, we obtain
 \be  \label{eq-u0c0}
  \Vert u_0 \Vert_{C^{2,\beta, \sigma  }_{ R_{\rho_1,\mathbf{ a}' }}} \le e^{(\beta^+_j -\sigma) R_{\rho_1,\mathbf{ a}' } }  \Vert u_0 \Vert_{C^{2,\beta, \beta^+_j }_{ R_{\rho_1,\mathbf{ a}' }}}  
   \le  e^{(\beta^+_j -\sigma)   \rho_1 } C _2\le c_1\le c_0   \ee 
for     $\beta=\frac{1}{3},\frac12$.
Once  \eqref{eq-barwa-20}-\eqref{eq-barwa-2'} are shown, we deduce that 
\bea \label{eq-conv-u_i}
\Vert \sum _{i=0}^\infty u_i \Vert _{C^{2,\beta, \sgamma  }_{ R_{\rho_1,\mathbf{ a}' }}} \le   e^{(\beta^+_j -\sgamma ) R_{\rho_1,\mathbf{ a}' } }\sum_{i=0}^\infty \Vert u_i \Vert_{C^{2,\beta, \beta^+_j  }_{ R_{\rho_1,\mathbf{ a}' }}}\le  e^{(\beta^+_j -\sgamma ) R_{\rho_1,\mathbf{ a}' } } 2 \Vert u_0\Vert _{C^{2,\beta, \beta^+_j }_{ R_{\rho_1,\mathbf{ a}' }}} \le 2c_1 e^{(\sigma  -\sgamma ) R_{\rho_1,\mathbf{ a}' } }  ,\eea 
and 
\bea
\Vert \sum _{i=1}^\infty u_i \Vert _{C^{2,\beta,\beta^+_j-\delta  }_{ R_{\rho_1,\mathbf{ a}' }}}  \le 2\Vert u_1 \Vert_{C^{2,\beta,\beta^+_j-\delta  }_{ R_{\rho_1,\mathbf{ a}' }}} \le (1+C_2)^{-1} e^{\delta R_{\rho_1,\mathbf{ a}' } }   \Vert u_0 \Vert_{C^{2,\beta,\beta^+_j }_{ R_{\rho_1,\mathbf{ a}' }}}  \le   e^{\delta R_{\rho_1,\mathbf{ a}' } } |a_j| 
\eea
for     $\beta=\frac{1}{3},\frac12$, 
which imply  
\eqref{eq-barwbara} and \eqref{eq-wdiff}.

 \medskip

So, it remains to prove \eqref{eq-barwa-20} and \eqref{eq-barwa-2'}. By  \eqref{eq-barwa} and \eqref{eq-u0c0}, we apply Corollary  \ref{cor-ex-linear}  and   Lemma \ref{lem-error-estimate}   to get 
\bea
&\Vert  u_1 \Vert_{C^{2,\beta,\beta^+_j-\delta}_{ R_{\rho_1,\mathbf{ a}' }}}
 \le C _0\Vert E(\tilde w_{\mathbf{a}}+u_0) -E(\tilde w_{\mathbf{a}}) \Vert_{C^{0,\beta,\beta^+_j-\delta}_{ R_{\rho_1,\mathbf{ a}' }}} \\
  &\leq  { C_0 }C_1  e^{   {  \delta}   R_{\rho_1,\mathbf{   a}' }}  \Big\{  e^{   (\sgamma  -\sigma)  R_{\rho_1,\mathbf{   a}' }} \Vert \tilde w_{\mathbf{ a}} \Vert_{C_{ R_{\rho_1,\mathbf{   a}' }}^{2,\beta,\sgamma}}  +  e^{   (\beta_j^+  -\sigma)  R_{\rho_1,\mathbf{   a}' }}\Vert u_0\Vert_{C_{ R_{\rho_1,\mathbf{   a}' }}^{2,\beta,\beta_j^+}}     +e^{-4\alpha  R_{\rho_1,\mathbf{ a}' }} \Big\}  { \Vert u_0\Vert_{C^{2,\beta,\beta^+_j }_{ R_{\rho_1,\mathbf{ a}' }}}}\\
    &\leq { C_0 } C_1  e^{   {  \delta}   R_{\rho_1,\mathbf{   a}' }}   \left(  2Kc_1+c_1+c_1  \right)  { \Vert u_0\Vert_{C^{2,\beta,\beta^+_j }_{ R_{\rho_1,\mathbf{ a}' }}}}
\eea
  for     $\beta=\frac{1}{3},\frac12$. Here, we  recall that  $\delta $ is given by \eqref{eq-deltagap} when applying the error estimate in   Lemma \ref{lem-error-estimate}. 
This shows \eqref{eq-barwa-20} since $C_0C_1(2K+2)c_1 \le [2(1+C_2)]^{-1}$ by the choice of $c_1$.    
Next,  suppose that  \eqref{eq-barwa-2'} holds for all $2\le i\le k$ with a  given $k\ge2$.  By \eqref{eq-barwa-20}, \eqref{eq-u0c0} and the induction hypothesis, it follows that 
 \be  
 \sum _{i=0}^{k} \Vert u_i \Vert _{C^{2,\beta, \sigma  }_{ R_{\rho_1,\mathbf{ a}' }}}\le \sum _{i=0}^k  e^{(\beta^+_j -\sigma) R_{\rho_1,\mathbf{ a}' } }  \Vert u_i \Vert_{C^{2,\beta, \beta^+_j }_{ R_{\rho_1,\mathbf{ a}' }}} \le 2 e^{(\beta^+_j -\sigma) R_{\rho_1,\mathbf{ a}' } }  \Vert u_0 \Vert_{C^{2,\beta, \beta^+_j }_{ R_{\rho_1,\mathbf{ a}' }}}  \le 2 c_1 \le c_0  
 \ee 
for     $\beta=\frac{1}{3},\frac12$. Applying  Corollary  \ref{cor-ex-linear}  and   Lemma \ref{lem-error-estimate}  again,    we  obtain 
\bea 
&\Vert u_{k+1}\Vert_{C^{2,\beta,\beta^+_j-\delta}_{ R_{\rho_1,\mathbf{ a}' }}}\le C_0 \Vert E(\tilde w_{\mathbf{a}}+\sum_{i=0}^k u_i)-E(\tilde w_{\mathbf{a}}+\sum_{i=0}^{k-1} u_i) \Vert _{C^{0,\beta,\beta^+_j-\delta}_{ R_{\rho_1,\mathbf{ a}' }}}\\
 &\le   C_0 C_1 \Big\{   e^{( \sgamma  -\sigma)  R_{\rho_1,\mathbf{ a}' }}  \Vert \tilde w_{\mathbf{a}}\Vert_{C^{2,\beta,\sgamma  }_{ R_{\rho_1,\mathbf{ a}' }} }  +\sum_{i=0}^k  e^{( \beta^+_j  -\sigma)  R_{\rho_1,\mathbf{ a}' }}\Vert u_i\Vert_{C^{2,\beta,\beta^+_j }_{ R_{\rho_1,\mathbf{ a}' }} }  + e^{-4\alpha { R_{\rho_1,\mathbf{ a}' }}}\Big\}\Vert u_k \Vert_{C^{2,\beta,\beta^+_j -\delta}_{ R_{\rho_1,\mathbf{ a}' }}} ,\\
\eea 
and hence  
\bea 
\Vert u_{k+1}\Vert_{C^{2,\beta,\beta^+_j-\delta}_{ R_{\rho_1,\mathbf{ a}' }}}  
&\le   C_0 C_1 ( 2K c_1 +  2c_1 + c_1)\Vert u_k \Vert_{C^{2,\beta,\beta^+_j-\delta}_{ R_{\rho_1,\mathbf{ a}' }}} \le  2^{-1} \Vert u_k \Vert_{C^{2,\beta,\beta^+_j-\delta}_{ R_{\rho_1,\mathbf{ a}' }}} 
\eea 
for     $\beta=\frac{1}{3},\frac12$.
Since  the case when  $k=1$ can be proved by the same argument,  we deduce that    \eqref{eq-barwa-2'} holds true.  
This completes the proof. 
\end{proof}

\section{Construction of global barriers}\label{sec-global-bar}
The next goal is to fill in translators existing on exterior domains, given  in Theorem \ref{thm-existencerevised11}, and obtain complete translators. This goal is achieved by constructing global barriers which are asymptotic to exterior translators. 
In this section,   we  first construct a translator   on the exterior of a  large ball  
$ B_{ R_{\rho_2,\mathbf{a}}}$  (for $  \rho_2 >      \rho_1 $),   that has a prescribed  behavior of  its    value  and radial derivative of    $S(e^s,\theta)/h(\theta)$ on the boundary $\partial B_{ R_{\rho_2,\mathbf{a}}}$ in   Proposition \ref{prop-exist-exterior'1}. 
This technical result will be used  to construct  global upper and lower  barriers in Proposition \ref{prop-global-barriers}. 

 For this task, the constants 
$c_1$ and $\rho_1$ in  \eqref{eq-c-prefix}-\eqref{eq-R0-prefix} need to satisfy an additional condition. By possibly replacing with smaller $c_1$ and larger $\rho_1$, let us further assume that
\be \label{eq-conditioneee}
{ C_0 }C_1 \left[   2  c_1 K+ 2 c_1  + e^{-4\alpha  \rho_1}\right ] <\frac{\sigma\eae }{4(  \inf_{\mathbb{S}^{1}} h+   \sup_{\mathbb{S}^{1}} h)},\quad \text{where}\quad \eae:=\frac{1}{4} \left( 2-\frac{1}{\sigma}\right) .\ee 
Here, the constants  $C_0$ and  $C_1$  are given
  by Corollary \ref{cor-ex-linear},
 and Lemma \ref{lem-error-estimate}, 
 respectively,  as before  for both     $\beta=\frac{1}{3},\frac12$.  
 Let us also fix a  small constant  $c_2>0$  such that 
\be \label{eq-conditionee}
 c_2 \le \frac {c_1} {  \inf_{\mathbb{S}^{1} 
  }h+  \sup_{\mathbb{S}^{1}}h   },   
 \ee  
 \be\label{eq-conditionee2}  
  (1-\sigma c_2)^{ \frac 1\sigma -1 } >1- \left( \sigma^{-1} -1  + \eae\right)\sigma c_2, \quad (1-\eae) (\sigma^{-1}-1+\eae)\sigma c_2<\eae, 
 \ee 
  \be\label{eq-conditionee3}  
 \hbox{and}\quad(1+\sigma c_2)^{ \frac 1\sigma -1 }  < 1 +\left( \sigma^{-1} -1  + \eae\right)\sigma c_2 
  .\ee 
Here, the last three conditions will be used  when constructing global barriers in Proposition \ref{prop-global-barriers}.  
\smallskip

In the following, we obtain  exterior
translators  having a prescribed behavior of its value and radial derivative on the boundary.

 \begin{prop}[Barriers on exterior domains]\label{prop-exist-exterior'1}  
For $\mathbf{a}\in \mathbb{R}^K$, let    {$\tilde  w _{\mathbf{a}} \in C^{2,{ \frac12},\sgamma}_{ R_{\rho_1,\mathbf{a}}}$}  
represent  the translator  
obtained in Theorem \ref{thm-existencerevised11} with   $R_{\rho_1,\mathbf{a}}  =\rho_1+ \ln (\vert \mathbf{b}(\mathbf{a})\vert+1)$. 
  Then, there is a constant $\rho_2  >  \rho_1$ (depending on  $\alpha$ and  $h$) such that  for each  $\mathbf{a}\in \mathbb{R}^K$ and for each $\rho \ge \rho_2$,  there exist    solutions  $w_+ $ and $ w_- $ in   $ C^{2,\frac12, -\frac12  }_{ R_{\rho,\mathbf{a}}} $   to the equation
 \be
\label{eq-prop27'1}
\hat L( \tilde w _{\mathbf{a}}+w_\pm)=E( \tilde w _{\mathbf{a}}+w_\pm) \qquad \text { on }\,\, ( R_{\rho,\mathbf{a}},\infty)\times \mathbb{S}^1
\ee
 with $R_{\rho,\mathbf{a}}= \rho   +   \ln (\vert \mathbf{b}(\mathbf{a})\vert+1)$, 
which satisfy the following    boundary conditions: 
\be\label{eq-exist-ex-bd1'1}
\left\{\ba
& 
 \frac{\tilde w _{\mathbf{a}}+w_\pm}h = \mp c_2 e^{\sigma{ R_{\rho,\mathbf{a}}} } \qquad &\text{ on }\,\,  s={ R_{\rho,\mathbf{a}}},\\
&  \pm \frac{\p }{\p s } \Big(\frac{\tilde w _{\mathbf{a}}+w_\pm}{h}\Big) >  (1-\eae){\sigma c_2}{ } e^{\sigma { R_{\rho,\mathbf{a}}}} \qquad &\text{ on } \,\, s={ R_{\rho,\mathbf{a}}} .
\ea\right.
\ee 
Moreover,  we have 
  \be\Vert w_\pm \Vert _{C^{2,  {\beta},-\frac12}_{ R_{\rho, \mathbf{a}}}}  \le 2  c_1 e^{(\sigma+\frac12){{ R_{\rho,\mathbf{a}}}}} \qquad (\beta=\tfrac13,\tfrac12).
  \ee
\end{prop}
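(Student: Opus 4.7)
The plan is to construct $w_\pm$ via an additive decomposition $w_\pm = u_0 + v$, where $u_0$ solves the linear Jacobi equation $\hat L u_0 = 0$ and carries the entire prescribed Dirichlet datum at $s = R := R_{\rho,\mathbf{a}}$, while $v$ vanishes on the boundary and absorbs the nonlinear correction through a fixed-point iteration mimicking Theorem~\ref{thm-existencerevised11}. For $u_0$, I expand the boundary datum
\[
f_\pm(\theta) \,:=\, \mp c_2 e^{\sigma R} h(\theta) \,-\, \tilde w_{\mathbf{a}}(R,\theta) \,=\, \sum_{j \ge 0} f_\pm^{(j)} \varphi_j(\theta)
\]
in the $L^2_h(\mathbb{S}^1)$-orthonormal basis and set $u_0(s,\theta) := \sum_{j} f_\pm^{(j)} \varphi_j(\theta)\, e^{\beta_j^-(s-R)}$. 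Since each separated mode $\varphi_j e^{\beta_j^- s}$ lies in $\ker\hat L$, one has $\hat L u_0 = 0$ and $u_0|_{s=R} = f_\pm$. Thanks to $\varphi_0 = m_0 h$, the leading mode is exactly $\mp c_2 e^{\sigma R} h(\theta)\, e^{-\sigma(s-R)}$, while the remaining modes arise solely from the projection of $-\tilde w_{\mathbf{a}}(R,\cdot)$ onto the higher $\varphi_j$ and are of size $\mathcal{O}(e^{\sgamma R})$ by the a priori bound \eqref{eq-448bound}. Since $\beta_j^- \le \beta_0^- = -\sigma < -\tfrac12$ for every $j$, a direct estimate on the spectral series yields $\Vert u_0\Vert_{C^{2,\beta,-1/2}_R} \lesssim c_2\, e^{(\sigma+1/2)R}$ uniformly in $\mathbf{a}$.

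With $\hat L\tilde w_{\mathbf{a}} = E(\tilde w_{\mathbf{a}})$ and $\hat L u_0 = 0$, equation \eqref{eq-prop27'1} reduces to
\[
\hat L v \,=\, E(\tilde w_{\mathbf{a}} + u_0 + v) - E(\tilde w_{\mathbf{a}}), \qquad v\big|_{s=R} = 0.
\]
Because $\alpha < 1/4$ forces $\beta_0^- = -\sigma < -\tfrac12 < -2\alpha = \beta_0^+$, the weight $\gamma = -\tfrac12$ lies in the window of Lemma~\ref{lem-ex-linear}, producing a bounded inverse $H_{0,R} : C^{0,\beta,-1/2}_R \to C^{2,\beta,-1/2}_R$ with zero Dirichlet data. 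Starting from $v_0 := 0$, I iterate
\[
v_1 := H_{0,R}\!\bigl(E(\tilde w_{\mathbf{a}} + u_0) - E(\tilde w_{\mathbf{a}})\bigr), \qquad v_{k+1} - v_k := H_{0,R}\!\bigl(E(\tilde w_{\mathbf{a}} + u_0 + v_k) - E(\tilde w_{\mathbf{a}} + u_0 + v_{k-1})\bigr).
\]
A direct computation gives $\Vert \tilde w_{\mathbf{a}} + u_0\Vert_{C^{2,\beta,\sigma}_R} \lesssim c_2 + e^{-\delta R}$, and Lemma~\ref{lem-error-estimate} combined with this smallness and the constraint \eqref{eq-conditionee} on $c_2$ yields a contraction $\Vert v_{k+1}-v_k\Vert_{C^{2,\beta,-1/2}_R} \le \tfrac12 \Vert v_k - v_{k-1}\Vert_{C^{2,\beta,-1/2}_R}$ provided $\rho \ge \rho_2$ is large enough. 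Summing the telescoping series produces $v$, and therefore $w_\pm = u_0 + v$ with $\Vert w_\pm\Vert_{C^{2,\beta,-1/2}_R} \le 2c_1 e^{(\sigma+1/2)R}$ for both $\beta = \tfrac13, \tfrac12$.

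It remains to verify the derivative inequality in \eqref{eq-exist-ex-bd1'1}. Differentiating the explicit series for $u_0$ and isolating the $j=0$ mode yields
\[
\partial_s\!\bigl(u_0/h\bigr)(R,\theta) \,=\, \pm\,\sigma c_2 e^{\sigma R} \,+\, \mathcal{O}(e^{\sgamma R}),
\]
with the error controlled by $\Vert \tilde w_{\mathbf{a}}(R,\cdot)\Vert_{C^{2,\beta}(\mathbb{S}^1)}$ via Sobolev embedding. The contribution of $\partial_s(\tilde w_{\mathbf{a}}/h)$ is likewise $\mathcal{O}(e^{\sgamma R})$, and a boundary Schauder estimate applied to $\hat L v = (\text{source})$ with $v|_{s=R}=0$ gives $|\partial_s v(R,\theta)| \le C c_2 e^{\sigma R}\cdot e^{-\delta(\rho-\rho_1)}$. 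Choosing $\rho_2 = \rho_2(\alpha,h)$ large enough absorbs all these corrections into the $\eae$-margin, yielding $\pm\partial_s((\tilde w_{\mathbf{a}} + w_\pm)/h)(R,\theta) > (1-\eae)\sigma c_2 e^{\sigma R}$. The main obstacle is making this choice of $\rho_2$ uniform in $\mathbf{a}\in\mathbb{R}^K$; this succeeds because every correction scales with the $\mathbf{a}$-free factor $e^{-\delta(\rho-\rho_1)}$, as $R_{\rho,\mathbf{a}} - R_{\rho_1,\mathbf{a}} = \rho - \rho_1$.
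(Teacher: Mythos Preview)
Your construction is essentially the paper's: the same spectral ansatz for $u_0$ (which the paper writes as $\hat g - c_2 e^{\sigma R} h\,e^{\beta_0^-(s-R)}$ with $\hat g$ solving $\hat L\hat g=0$, $\hat g|_{s=R}=-\tilde w_{\mathbf a}$), the same fixed-point iteration through $H_{0,R}$, and the same accounting for the boundary derivative. The one inaccuracy is the claimed decay $|\partial_s v(R,\theta)| \le Cc_2 e^{\sigma R}\, e^{-\delta(\rho-\rho_1)}$: the contraction coefficient in Lemma~\ref{lem-error-estimate} contains the term $e^{-\sigma\tau}\Vert u_0\Vert_{C^{2,\beta}_{\tau,1}} = \mathcal O(c_2)$, which does \emph{not} shrink as $\rho\to\infty$, so no boundary Schauder argument will produce that extra factor. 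The paper instead secures $|\partial_s v(R,\theta)| \le \tfrac12\eae\,\sigma c_2 e^{\sigma R}$ directly from the structural constraint \eqref{eq-conditioneee} on $c_1$ (via $\Vert u_1\Vert \le \tfrac{\sigma\eae}{4(\inf h+\sup h)}\Vert u_0\Vert$), and reserves the largeness of $\rho_2$ solely for damping the $\hat g$ and $\tilde w_{\mathbf a}$ contributions---your corrections (1) and (2)---into the remaining half of the $\eae$-margin.
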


\begin{proof}
It suffices to construct a  solution $w_+$ to \eqref{eq-prop27'1} 
 since the other is similar. For  each  $\rho > \rho_1$, 
 there exists a   solution  $\hat   g$  in  $  C^{2,\frac12,\beta^-_0}_{ R_{\rho,\mathbf{a}}} $  
 to the problem 
\be\label{eq-hat-g-def}
\left\{
\ba 
&\hat L(\hat g) =0\qquad &&\text{ on \,\, $( R_{\rho,\mathbf{a}},\infty)\times \mathbb{S}^1$} ,\\
&\hat  g = -\tilde w_{\mathbf{a}} \qquad &&\text{ on  \,\,$\{s= R_{\rho,\mathbf{a}}\}\times \mathbb{S}^1$} ,
\ea \right.
\ee
where we recall $\beta^-_0=-\sigma$. 
More specifically,  let $\tilde w_{\mathbf{a}} ({ R_{\rho, \mathbf{a}}} ,\cdot) =\displaystyle \sum_{j=0}^\infty d_j  \varphi_j(\cdot) $ with some  constants  $d_j\in\mathbb{R}$ for $j\geq0$. 
Here,   $\{\varphi_j \}_{j=0}^\infty $  is an orthonormal basis of $L^2_h(\mathbb{S}^1)$  consisting of  eigenfunctions of $L$ solving  \eqref{eq-lambdavarphi}.  Then  a solution $ \hat g$ can be explicitly written as  
 \bea \label{eq-hatgdef}
 \hat g(s,\theta) = - \sum _{j=0}^\infty d_j e^{\beta^-_j (s-{ R_{\rho,\mathbf{a}}} )} \varphi_j(\theta)\quad \text{ on    $[ R_{\rho,\mathbf{a}},\infty)\times \mathbb{S}^1$}. 
 \eea 
 Here,   it can be shown that  $\hat g$ belongs to $  C^{2,\frac12,\beta^-_0}_{ R_{\rho,\mathbf{a}}} $ by  using   (interior and boundary) $W^{2,2}$ and Schauder estimates  for elliptic equations. Indeed, it holds that  for each $\rho\geq \rho_2$,    
\be \label{est-hat-g}
 e^{    \beta^-_0   R_{\rho,\mathbf{a}}}     \Vert\hat g  \Vert _{C^{2,\beta ,\beta^-_0 }_ {{ R_{\rho,\mathbf{a}}}} } \leq Ce^{  \sgamma   R_{\rho,\mathbf{a}}}    \Vert \tilde w_{\mathbf{a}} \Vert_{C^{2,\beta,  \sgamma  }_{ R_{\rho,\mathbf{a}}} }  \leq C    e^{(\sigma -  \sgamma  )(  \rho_1-\rho)}  e^{ \sigma     R_{\rho,\mathbf{a}}}  
\ee   with  $\beta=\frac13,\frac12$, 
 since  $ \Vert \tilde w_{\mathbf{a}} \Vert_{C^{2,\beta,  \sgamma  }_{ R_{\rho_1,\mathbf{a}}} } \le e^{(\sigma -  \sgamma  ) R_{\rho_1,\mathbf{a}}}  $ by Theorem \ref{thm-existencerevised11}.  Here, a constant    $C>1$ 
 is chosen to satisfy regularity estimates   for both    $\beta=\frac13,\frac12$, and it depends on $\alpha$ and  $h$.
As in the proof of Theorem \ref{thm-existencerevised11}, we 
 may     omit to mention   that   constants appearing  uniform estimates  are selected  to satisfy those for both $\beta=\frac{1}{3},\frac12$.
 By choosing  $\rho_2> \rho_1$ sufficiently large, this implies that  for each $\rho\geq \rho_2$, 
\be\label{eq-est-hat-g}
 e^{    \beta^-_0   R_{\rho,\mathbf{a}}}     \Vert\hat g  \Vert _{C^{2,\beta ,\beta^-_0 }_ {{ R_{\rho,\mathbf{a}}}} } + e^{  \sgamma  R_{\rho,\mathbf{a}}}    \Vert \tilde w_{\mathbf{a}} \Vert_{C^{2,\beta,  \sgamma  }_{ R_{\rho,\mathbf{a}}} }   < {  \frac{ \,{ 1 }  }{2  }( \inf_{\mathbb{S}^{1}} h)  \eae  \sigma     c_2      e^{   \sigma     R_{\rho,\mathbf{a}}}} ,
\ee   with   $\beta=\frac13,\frac12$, 
 where  $\rho_2$ depends only on $\alpha$ and  $h$. 
 \medskip

 In order to construct a solution $ u$, we 
 first define
\bea\label{eq-u_0outer} 
 u_0(s,\theta) := \hat  g (s,\theta)- { c_2   e^{\sigma{ R_{\rho,\mathbf{a}}} } e^{\beta^-_0 (s- { R_{\rho,\mathbf{a}}} )}h(\theta)}\qquad\hbox{on  $[ R_{\rho,\mathbf{a}},\infty)\times \mathbb{S}^1$}.
 \eea
Using \eqref{eq-est-hat-g}  and $\beta^-_0 = -\sigma $, 
we obtain 
\be\label{eq-u_0cases1}
 \begin{cases}
  \ba &\hat L (u_0) = 0 && \text{on\,\, $(R_{\rho,\mathbf{a}},\infty)\times \mathbb{S}^1$} , \\  
  &\frac{\tilde w_{\mathbf{a}}+u_0}{h} = -c_2 e^{\sigma{ R_{\rho,\mathbf{a}}} } &&\text{on  \,\,$\{s= R_{\rho,\mathbf{a}}\}\times \mathbb{S}^1$}, \\
 &\frac{\p}{\p s} \Big(\frac{\tilde w_{\mathbf{a}}+u_0}{h} \Big){ > \Big(1-\frac{\eae }
 {2}\Big)\sigma c_2 e^{\sigma { R_{\rho,\mathbf{a}}}} } && \text{on  \,\,$\{s= R_{\rho,\mathbf{a}}\}\times \mathbb{S}^1$}.
 \ea 
 \end{cases} 
 \ee  
   In view of Lemma \ref{lem-ex-linear} (with   $\beta_0^-=-\sigma< -\frac12< \beta_0^+=-2\alpha$),  
we  inductively define  
\be \label{eq-u1} 
u_{1}:= H_{0,{ R_{\rho,\mathbf{a}}}}\Big(E({\tilde w_{\mathbf{a}}}+u_0)- E({\tilde w_{\mathbf{a}}})\Big),
\ee
and 
\be \label{eq-uk+1} 
u_{k+1}:= H_{0,{ R_{\rho,\mathbf{a}}}}\Big(E\Big({\tilde w_{\mathbf{a}}}+\sum_{i=0}^ku_i\Big)- E\Big({\tilde w_{\mathbf{a}}}+\sum_{i=0}^{k-1}u_i\Big)\Big)\quad\hbox{for all $k\geq1$}.
\ee 
(We will verify that  $E\Big({\tilde w_{\mathbf{a}}}+\sum_{i=0}^{k} u_i \Big)-E\Big ({\tilde w_{\mathbf{a}}}+\sum_{i=0}^{k-1} u_i \Big )$ belongs to {$ C^{0,\frac12,-\frac12}_{ R_{\rho,\mathbf{a}}}$} as before.) 
\bigskip 

To  construct a desired solution $u$, we  claim that  
for   each  $\rho\geq \rho_2$, 
 \bea \label{eq-inductionstrong1}  
 \Vert u_{i+1}\Vert_{C^{2,\beta,-\frac12}_{ R_{\rho,\mathbf{a}}}} \le   2^{-i}   c_1  e^{(\frac12+\sigma){ R_{\rho,\mathbf{a}}}}\qquad \forall i\ge 0    \eea 	
  with    $\beta=\frac13,\frac12$.   To prove the claim,  
we  first   obtain  from \eqref{eq-est-hat-g} and the choice of $c_2$ in \eqref{eq-conditionee} that  
\bea \label{eq-est-u0-4.81}
\Vert u_0 \Vert _{C^{2,\beta ,-\frac12 }_ {R_{\rho,\mathbf{a}}} }&\leq e^{( \frac12+\beta^-_0) { R_{\rho,\mathbf{a}}}}\Vert u_0 \Vert _{C^{2,\beta ,\beta^-_0 }_ { R_{\rho,\mathbf{a}}} }
 \le  c_2 e^{(\frac12 +\sigma ){ R_{\rho,\mathbf{a}}}} \left[ \frac{ \,{ 1 }  }{2  }( \inf_{\mathbb{S}^{1}} h)  \eae  \sigma       +   ( \sup_{\mathbb{S}^{1}} h)   \right]   \le c_1 e^{(\frac12+\sigma){ R_{\rho,\mathbf{a}}}} 
 \eea 
  for    $\beta=\frac13,\frac12$. 
By a similar argument as in the proof of Theorem \ref{thm-existencerevised11}, we have  
\bea \label{eq-4156} \Vert u_{k+1}\Vert_{C^{2,\beta,-\frac12 }_{{ R_{\rho,\mathbf{a}}}}}&\le C_0  \Vert E({\tilde w_{\mathbf{a}}}+\sum_{i=0}^k u_i)-E({\tilde w_{\mathbf{a}}}+\sum_{i=0}^{k-1} u_i)  \Vert _{C^{0,\beta,-\frac12 }_{ R_{\rho,\mathbf{a}}}}\\
&\le  { C_0 }C_1 \left[   e^{( \sgamma -\sigma) { R_{\rho,\mathbf{a}}}}  \Vert {\tilde w_{\mathbf{a}}}\Vert_{C^{2,\beta,\sgamma  }_{R_{\rho,\mathbf{a}}} }  +\sum_{i=0}^k \Vert u_i\Vert_{C^{2,\beta,\sigma  }_{{  R_{\rho,\mathbf{a}}}} }  + e^{-4\alpha { R_{\rho,\mathbf{a}}}}\right ]\Vert u_k \Vert_{C^{2,\beta,-\frac12 }_{{ R_{\rho,\mathbf{a}}}}} \\
&\le  { C_0 }C_1 \left[   2  c_1 K+ 2 c_1  + e^{-4\alpha  \rho_1}\right ]\Vert u_k \Vert_{C^{2,\beta,-\frac12}_{{ R_{\rho,\mathbf{a}}}}} <2^{-1}\Vert u_k \Vert_{C^{2,\beta,-\frac12}_{{ R_{\rho,\mathbf{a}}}}}     
  \eea    for    $\beta=\frac13,\frac12$. 
 This implies \eqref{eq-inductionstrong1}   and hence      $\{u_k\}_{k=0}^\infty$ geometrically converges  in $ C^{2,\beta, -\frac12  }_{ R_{\rho,\mathbf{a}}}     $ (for $\beta=\frac13,\frac12$), and     letting  $w_+:= \sum _{i=0}^\infty u_i$,  \eqref{eq-est-u0-4.81} yields that   for    $\beta=\frac13,\frac12$, 
\be \label{eq-ui-1/21}
\Vert w_{+} \Vert _{C^{2,\beta,-\frac12}_{ R_{\rho,\mathbf{a}}}} \le 2\Vert  u_0 \Vert _{C^{2,\beta,-\frac12}_{ R_{\rho,\mathbf{a}}}}\le 2  c_1 e^{(\sigma+\frac12){{ R_{\rho, \mathbf{a}}}}} .
 \ee

 Note that  $\tilde w _{\mathbf{a}}+w_+$ solves  the translator equation on $ ( R_{\rho,\mathbf{a}},\infty)\times \mathbb{S}^1$. It remains  to check  the boundary conditions  in \eqref{eq-u_0cases1}.  
Since    \eqref{eq-4156}, \eqref{eq-conditioneee} and \eqref{eq-est-u0-4.81}     imply
  \bea \Vert u_{1}\Vert_{C^{2,\beta,-\frac12 }_{{ R_{\rho,\mathbf{a}}}}} 
&\le    { \frac{ \sigma\eae}{4( {\inf_{\mathbb{S}^{1}} h  } + \sup_{\mathbb{S}^{1}} h )} } \Vert u_0 \Vert_{C^{2,\beta,-\frac12 }_{{ R_{\rho,\mathbf{a}}}}} 
<   \frac{ 1}{4} \eae    \sigma c_2   e^{(\frac12 +\sigma ){ R_{\rho,\mathbf{a}}}},
\eea it follows that    
  \bea \label{eq-ui-1/211}
\Vert \sum _{i=1}^\infty u_i \Vert _{C^{2,\beta,-\frac12}_{ R_{\rho,\mathbf{a}}}}\le  { 2 }\Vert  u_1 \Vert _{C^{2,\beta,-\frac12}_{ R_{\rho,\mathbf{a}}}}
&\le   \frac{  1}{2} \eae    \sigma c_2  e^{(\frac12 +\sigma ){ R_{\rho,\mathbf{a}}}} 
\eea    for    $\beta=\frac13,\frac12$. 
By  the zero boundary condition  for the operator $H_{0, R_{\rho,\mathbf{a}}} $, we have 
\be \label{eq-u-u_0cases1}  \ba  & \sum_{i=1}^\infty u_i =0,  && 
 &\bigg \vert\frac{\p}{\p s}  \sum_{i=1}^\infty u_i  \bigg \vert < \frac {1} 2 \eae    \sigma c_2e^{  \sigma    R_{\rho,\mathbf{a}}} &&\text{ on $s=  R_{\rho,\mathbf{a}}$ }  .
 \ea   \ee  
 Therefore, the boundary conditions in  \eqref{eq-exist-ex-bd1'1}   follow from \eqref{eq-u_0cases1} and \eqref{eq-u-u_0cases1}. This finishes the proof.
\end{proof}

\begin{remark}
In Proposition \ref{prop-exist-exterior'1}, the weight $-\frac{1}{2}$ for the function space is chosen arbitrarily to satisfy $ \beta^-_0=2\alpha-1 < -\frac{1}{2} < \beta^+_0=-2\alpha $.
\end{remark}

 \medskip

Next, we construct global barriers that are asymptotic to an exterior translator $\tilde w_{\mathbf{a}}$. 

   \begin{prop}[Global barriers]\label{prop-global-barriers}

    There is a   constant  $\rho_3>\rho_2$ such that
      for each   exterior translator $\tilde w_{\mathbf{a}} $  given in Theorem \ref{thm-existencerevised11}, we have  continuous global barriers $S_\pm(l,\theta)$ on  $[l_\pm,\infty)\times \mathbb{S}^1$     for some $l_\pm < \ln R_{\rho_3,\mathbf{a}}$ with   $R_{\rho_3,\mathbf{a}}=\rho_3 +   \ln (\vert \mathbf{b}(\mathbf{a})\vert+1) $, which satisfy the following properties: 
      
\begin{enumerate}[(a)] 
 \item  On  $ [{R_{\rho_3,\mathbf{a}}},\infty)\times \mathbb{S}^1$,
 \be
 S_{\pm}(e^s,\theta)=   {\sigma}^{-1}   e^{\sigma s} h(\theta)+ 
 \tilde w_{\mathbf{a}}(s,\theta)  +  w_\pm (s,\theta) 
 \ee
       with 
  $\Vert w_\pm \Vert _{C^{2,\beta,-\frac12}_{ R_{\rho_3,\mathbf{a}}}}  \le   2c_1 e^{(\sigma+\frac{1}{2}){{ R_{\rho_3,\mathbf{a}}}}} $  ($\beta=\frac13,\frac12$). Here, $w_{\pm}$ are given by Proposition \ref{prop-exist-exterior'1}.
  
  \item      $S_\pm(l,\theta)$ are the support functions \eqref{eq-supportfunction} of graphs of  continuous  entire functions $u_\pm(x)$ on $\mathbb{R}^2$.  
 \item      $u_+$ (resp. $u_-$) is   a smooth   supersolution (resp. subsolution) to the translator equation \eqref{eq-translatorgraph}  away from  the origin and the level curve $\{u_+=e^{R_{\rho_3,\mathbf{a}}}\}$ (resp. $\{u_-=e^{R_{\rho_3,\mathbf{a}}}\}$). 
 
\item $u_+$ (resp. $u_-$) is a viscosity  supersolution (resp. subsolution)  to \eqref{eq-translatorgraph} on $\mathbb{R}^2$.
   
   \item $u_-\le u_+$ on $\mathbb{R}^2$. 
   
\end{enumerate}
\end{prop}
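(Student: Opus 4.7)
The plan is to glue the exterior translators produced by Proposition~\ref{prop-exist-exterior'1} to explicit convex interior models -- a cone for the upper barrier $u_+$ and a translated copy of the blow-down for the lower barrier $u_-$ -- and then to verify a convex ``upward-kink'' corner condition along the matching level curve, using the quantitative boundary data \eqref{eq-exist-ex-bd1'1} together with the numerical conditions \eqref{eq-conditionee}--\eqref{eq-conditionee3} on $c_2$.

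Choose $\rho_3 \ge \rho_2$ sufficiently large for the corner inequalities below (uniformly in $\mathbf{a}$), write $R := R_{\rho_3,\mathbf{a}}$, and apply Proposition~\ref{prop-exist-exterior'1} with $\rho = \rho_3$ to get $w_\pm$ on $[R,\infty)\times\mathbb{S}^1$ satisfying \eqref{eq-exist-ex-bd1'1}. On the exterior put $S_\pm(l,\theta) := \sigma^{-1}l^\sigma h(\theta) + \tilde w_{\mathbf{a}}(\ln l,\theta) + w_\pm(\ln l,\theta)$ for $l \ge e^R$; these are smooth translator support functions, defining smooth convex entire functions $u_\pm^{\mathrm{out}}$ on $\mathbb{R}^2 \setminus \Omega_\pm$, where $\Omega_\pm := \{u_\pm^{\mathrm{out}} \le e^R\}$ is the convex body bounded by the curve with support function $\sigma^{-1}(1\mp\sigma c_2)e^{\sigma R}h(\theta)$. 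For the interior I take two self-similar models that agree with $S_\pm(e^R,\cdot)$ at the boundary: the cone $S_+^{\mathrm{in}}(l,\theta) := (l/e^R)\,\sigma^{-1}(1-\sigma c_2)e^{\sigma R}h(\theta)$ on $[0,e^R]$, whose reconstructed function $u_+^{\mathrm{in}}$ is $e^R$ times the Minkowski functional of $\Omega_+$; and the translated shrinker $S_-^{\mathrm{in}}(l,\theta) := \sigma^{-1}(l-q_-)^\sigma h(\theta)$ on $[q_-,e^R]$ with $q_- := e^R(1-(1+\sigma c_2)^{1/\sigma}) < 0$, so that $u_-^{\mathrm{in}}(x) = \bar u(x) + q_-$ with $\bar u$ the $1/\sigma$-homogeneous blow-down. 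Then $u_\pm := u_\pm^{\mathrm{in}}$ on $\Omega_\pm$, $u_\pm := u_\pm^{\mathrm{out}}$ on the complement, $l_+ := 0$, $l_- := q_-$; items (a) and (b) are immediate.

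For item (c), the exterior pieces solve \eqref{eq-translatorgraph} exactly; the cone $u_+^{\mathrm{in}}$ is $1$-homogeneous and convex, hence $\det D^2 u_+^{\mathrm{in}} \equiv 0 \le (1+|Du_+^{\mathrm{in}}|^2)^{2-1/(2\alpha)}$ (the right-hand side is positive because the exponent is negative for $\alpha < \tfrac14$), giving the classical supersolution inequality on $\Omega_+\setminus\{0\}$; meanwhile $u_-^{\mathrm{in}}$ satisfies $\det D^2 u_-^{\mathrm{in}} = |Du_-^{\mathrm{in}}|^{4-1/\alpha}$, and since $t^{2p} \ge (1+t^2)^p$ for $p = 2-1/(2\alpha) < 0$, it is a classical subsolution on $\Omega_-$ away from its vertex. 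The core is item (d): applying $\partial_l = e^{-s}\partial_s$ to the derivative data in \eqref{eq-exist-ex-bd1'1} and differentiating the explicit interior expressions yields at $l = e^R$,
\[
\tfrac{1}{h}\partial_l S_+^{\mathrm{in}} = \sigma^{-1}(1-\sigma c_2)e^{(\sigma-1)R},\qquad \tfrac{1}{h}\partial_l S_+^{\mathrm{out}} > (1+(1-\eae)\sigma c_2)e^{(\sigma-1)R},
\]
\[
\tfrac{1}{h}\partial_l S_-^{\mathrm{in}} = (1+\sigma c_2)^{1-1/\sigma}e^{(\sigma-1)R},\qquad \tfrac{1}{h}\partial_l S_-^{\mathrm{out}} < (1-(1-\eae)\sigma c_2)e^{(\sigma-1)R}.
\]
The strict inequalities $\sigma^{-1}(1-\sigma c_2) > 1+(1-\eae)\sigma c_2$ (forced by the smallness of $c_2$ in \eqref{eq-conditionee}) and $(1+\sigma c_2)^{1-1/\sigma} > 1-(1-\eae)\sigma c_2$ (obtained by reciprocating \eqref{eq-conditionee3} and using $A > B$ where $A := (1-\eae)\sigma c_2$ and $B := (\sigma^{-1}-1+\eae)\sigma c_2$, which holds for $\alpha < \tfrac14$) then give $\partial_l S_\pm^{\mathrm{in}} > \partial_l S_\pm^{\mathrm{out}}$ at $l = e^R$; via $S_l = 1/|Du|$ these translate to the gradient jumps $|Du_\pm^{\mathrm{in}}| < |Du_\pm^{\mathrm{out}}|$ across $\partial\Omega_\pm$, which are the convex upward kinks that make $u_\pm$ globally convex. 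For $u_+$ the envelope identity $u_+ = \max(u_+^{\mathrm{in}},u_+^{\mathrm{out}})$ in a neighborhood of $\partial\Omega_+$ exhibits $u_+$ as the maximum of classical supersolutions and hence as a viscosity supersolution; for $u_-$, the superdifferential of a convex function at a genuine kink is empty, so the viscosity subsolution condition is vacuous on $\partial\Omega_-$. Finally, (e) reduces to $S_+ \le S_-$, which is strict at $l = e^R$, propagates outward by a comparison principle applied to \eqref{eq-translatorsupport}, and holds in the interior because $S_-^{\mathrm{in}}$ is concave in $l$ while $S_+^{\mathrm{in}}$ is linear, with matching endpoint values.

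The main obstacle is the analytic translation in item (d) -- extracting from the one-sided estimates of Proposition~\ref{prop-exist-exterior'1} the \emph{strict} sign of the normal-derivative jumps, and checking that \eqref{eq-conditionee}--\eqref{eq-conditionee3} encode exactly the sharp algebraic inequalities underlying the convex-kink condition. Once these are established, the construction and the viscosity verification proceed by standard arguments.
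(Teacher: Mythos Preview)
Your construction of $u_-$ is essentially sound (and close in spirit to the paper's), but the $u_+$ half contains two genuine errors that break the argument.

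\textbf{First error: max of supersolutions.} You write that $u_+=\max(u_+^{\mathrm{in}},u_+^{\mathrm{out}})$ near $\partial\Omega_+$ ``exhibits $u_+$ as the maximum of classical supersolutions and hence as a viscosity supersolution''. This is false: it is the \emph{minimum} of supersolutions that is a supersolution, not the maximum. Concretely, at your convex kink smooth test functions \emph{do} touch $u_+$ from below (with any gradient in the subdifferential and with $D^2\phi$ arbitrarily large), so the supersolution inequality $\det D^2\phi\le(1+|D\phi|^2)^{2-1/(2\alpha)}$ is not vacuous there and in fact fails. The correct geometry for a supersolution at a gluing seam is a \emph{concave} kink, where no smooth function can touch from below; that forces $\partial_l S_+^{\mathrm{in}}<\partial_l S_+^{\mathrm{out}}$, the opposite of what you arrange.

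\textbf{Second error: the cone at the origin.} A $1$-homogeneous convex cone is not a viscosity supersolution of \eqref{eq-translatorgraph} at its vertex. Any paraboloid $\phi(x)=\tfrac12 x^\top Ax$ with $A$ large touches the cone from below at $0$, and $\det A$ can be made arbitrarily large while the right-hand side equals $1$. So even before the seam, your inner model already fails item~(d).

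\textbf{How the paper fixes both issues.} The paper does not use a cone; it uses inner profiles $U_\pm(l,\theta)=c_\alpha^{-1}f_{M_\pm}(l)h(\theta)$ built from radial solutions $f_M$ of an $M$-modified equation. Choosing $M_+$ large (resp.\ $M_-$ small) makes $U_+$ a classical supersolution (resp.\ $U_-$ a subsolution) in the interior, and the asymptotics of $f_M$ are tuned so that at the gluing height the inner $\partial_l S$ lies on the correct side of the outer one: the resulting $u_+$ has a \emph{concave} seam (supersolution test vacuous) and $u_-$ a convex seam (subsolution test vacuous). The origin is then handled by a level-set area comparison: the free parameter $M_\pm$ controls $\det D^2$ of any touching paraboloid at the vertex, and one takes $M_+$ large (resp.\ $M_-$ small) to force the required inequality. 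Your $u_-$ idea of using a translated blow-down is a legitimate alternative to the paper's $U_-$, since $|x|^{1/\sigma}$ with $1/\sigma<2$ admits no smooth touching function from above at its minimum; but for $u_+$ you need an inner model with the opposite slope ordering and with controllable behavior at the vertex, which the cone does not provide.

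A minor further gap: your argument for (e) on the exterior (``propagates outward by a comparison principle'') is incomplete, since both $S_\pm^{\mathrm{out}}$ are exact solutions and you have not supplied the ordering at infinity. The paper obtains $u_-\le u_+$ by a $\delta$-translation trick: shifting $S_+$ by $-\delta$ in $l$, using the $O(l^{-1/2})$ decay of $w_\pm$ to get strict ordering far out, applying the comparison principle on the graphs, and sending $\delta\to0$.
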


 \begin{proof}
\noindent{\bf Step 1.} 
In order to construct     global barriers, we first consider a barrier on an inner region.  For a constant $M>0$,
let $f_M =f_M (l)$ be the radial solution to an $M$-modified translator equation  
\bea \label{eq-m-trans}
S+S_{\theta\theta}+ (M+S_{l}^{-2})^{\frac{1}{2\alpha}-2} S_l^{-4} S_{ll} =0
\qquad \hbox{ for $l>0$}
\eea  
with the conditions $f_M(0)=0$ and $f_M'(0)=\infty$. 
Note that $f_1$ is the radial translator. 
By direct computations,   there holds an asymptotic expansion
\be\label{eq-est-aymp-f1}
f_M(l)= {c_\alpha} \sigma^{-1} l^{\sigma} +   \frac{   c_\alpha^3 M}{2(1-4\alpha)}l^{\sigma-4\alpha}
 (1+o(1)) \qquad \hbox{as $ l\to\infty$}
\ee
with a constant $c_\alpha:=(2\alpha \sigma )^\alpha$.   
Now let us consider  a barrier of the form 
\be 
S(l,\theta)=Cf_M(l)h(\theta),
\ee where constants $C$ and $M$ will be determined later. 
By plugging this into   \eqref{eq-translatorsupport},    we have
 \bea &S+S_{\theta\theta}+ (1+S_{l}^{-2})^{\frac{1}{2\alpha}-2} S_l^{-4} S_{ll}\\
&= C  f_M \cdot 2\alpha \sigma  h^{1-\frac1\alpha} + (1+ (Cf_M'h)^{-2})^{\frac1{2\alpha}-2}(Cf_M'h)^{-4} (Cf_M''h)\\
&=  - C   (2\alpha\sigma ) h^{1-\frac1\alpha}  (M+f_M'^{-2})^{\frac1{2\alpha}-2}f_M'^{-4}f_M''  + C^{1-\frac1\alpha} h^{1-\frac1\alpha} \{(Ch)^2 +f_M'^{-2}\}^{\frac1{2\alpha}-2}f_M'^{-4}f_M''  \\
&=- Ch^{1-\frac1\alpha}f_M'^{-4}f_M'' \cdot \left[2\alpha\sigma(M+f_M'^{-2})^{\frac1{2\alpha}-2} - C^{-\frac1\alpha}\{(Ch)^2+f_M'^{-2}\}^{\frac1{2\alpha}-2}\right].
 \eea  
Letting $C=c_\alpha^{-1}$,  a barrier $ c_\alpha^{-1} f_M(l)h(\theta)$ solves
\be\left\{
\begin{aligned}
&S+S_{\theta\theta}+ (1+S_{l}^{-2})^{\frac{1}{2\alpha}-2} S_l^{-4} S_{ll} \ge 0 \qquad \text { if } \quad M\ge c_\alpha^{-2} \sup_{\mathbb{S}^1} \,  h^2 ,\\
&  S+S_{\theta\theta}+ (1+S_{l}^{-2})^{\frac{1}{2\alpha}-2} S_l^{-4} S_{ll} \le 0 \qquad \text { if } \quad M\le  c_\alpha^{-2} \inf_{\mathbb{S}^1} \,  h^2,
\end{aligned}\right.
\ee
since $f_M'>0$, $f_M''<0$, and $\frac{1}{2\alpha}-2>0$. 
\smallskip

Now we choose positive constants $M_+$ and $M_-$ such that  
\be
M_+\ge  c_\alpha^{-2} \sup_{\mathbb{S}^1} \,  h^2 \qquad \hbox{and}\qquad M_-\le   c_\alpha^{-2} \inf_{\mathbb{S}^1} \,  h^2,
\ee
and define
\be \label{eq-defUpm}
U_\pm(l,\theta)= c_\alpha^{-1} f_{M_\pm}(l)h(\theta).
\ee
Then,    $U_+$  is   a subsolution  and   $U_-$ is   a supersolution  
to  \eqref{eq-translatorsupport}  on each region where $U_i$ is smooth. Later, we will replace by a larger $M_+$ and a smaller $M_-$ so that barriers are respectively  a viscosity subsolution and supersolution of   \eqref{eq-translatorgraph}  at the origin. 
Using \eqref{eq-est-aymp-f1}, it follows that 
\be
\begin{aligned} \label{eq-est-aymp-U/h}
 \frac{U_\pm(l,\theta)}{h(\theta)}&=  \sigma^{-1}l^{\sigma}  \left\{1+  \frac{\sigma  c_\alpha^2 M_\pm   }{2(1-4\alpha)} l^{-4\alpha}   (1+o(1))\right\} \qquad \hbox{as $l\to\infty$}.
 \end{aligned}
\ee
Letting $g_{\pm}$ be the inverse of $c_\alpha^{-1} f_{M_\pm}=  {U_\pm}/{h} $,  we have 
\be\label{eq-est-aymp-g}
g_\pm(t)=\left( \sigma t\right)^{\frac{1}{ \sigma}} -\frac{  c_\alpha^2 M_\pm }{2(1-4\alpha)}    \left(\sigma  t\right)^{\frac{1 }{ \sigma} -\frac{ 4\alpha}{ \sigma}} (1+o(1))
\qquad \hbox{as $ t\to\infty$}.
 \ee There also hold corresponding asymptotic expansions for derivatives of $g_{\pm}(t)$ (in $t$) and $U_{\pm}/h$ (in $l$).  
Using this, we  obtain
\bea \label{eq-est-asymp-dg11} 
\left.\frac{\p}{\p l}\right\vert _{U_\pm/h = L}  ({U_\pm }/{h })=   \frac{1}{g_\pm ' (L) }=(  \sigma L)^{ 1-\frac{1}{\sigma}} + \frac{  c_\alpha^2 M_\pm}{2} ( \sigma L)^{1-\frac{1}{\sigma}-\frac{4\alpha}{ \sigma}} (1+o(1)) \quad\hbox{as $ L\to\infty$.}
\eea 
    In view of \eqref{eq-est-asymp-dg11},   
 there is a sufficiently large $L_0=L_0(\alpha, h , M_\pm )$ such  that  for all $L>L_0$,
\bea \label{eq-est-asymp-dg2} 
   (  \sigma L)^{ 1-\frac{1}{\sigma}} <\left.\frac{\p}{\p l}\right\vert _{U_-/h = L}  ({U_- }/{h })<\left.\frac{\p}{\p l}\right\vert _{U_+/h = L}  ({U_+ }/{h }) < (1+ \eae \sigma c_2  )   (  \sigma L)^{ 1-\frac{1}{\sigma}}   .\eea
Here, $\eae$ and $c_2$  are given in \eqref{eq-conditioneee}-\eqref{eq-conditionee3}.

     \medskip
     
  Next, we construct a barrier on an outer region. 
In light of Proposition \ref{prop-exist-exterior'1},   for     $\rho_3> \rho_2$  to be chosen later,   consider  \be \label{eq-Vpm} V_{\pm} (s,\theta)= \sigma^{-1}e^{\sigma s} h(\theta) + \tilde w_{\mathbf{a}} (s,\theta) +  w_\pm(s,\theta)\ee   which solve  \eqref{eq-translatorsupport} on 
$  ( R_{\rho_3, \mathbf{a}},\infty )\times \mathbb{S}^1  $.    
Then,   $V_\pm(s,\theta)$  satisfy   that 
\be\label{eq-exist-ex-bd1-glo1}
    (V_\pm/h)    \Big\vert _{s=R_{\rho_3,\mathbf{a}}} =    (   {\sigma}^{-1}   \mp c_2)  e^{\sigma R_{\rho_3,\mathbf{a}}}, \qquad  
 \pm   \frac{\p}{\p s}\Big\vert _{s=R_{\rho_3,\mathbf{a}}} (V_\pm/h) > \pm  \left[1\pm   \left(1-  \eae\right)\sigma c_2  \right] e^{\sigma R_{\rho_3,\mathbf{a}}} .
\ee
Let  $ L_\pm  :=   (V_\pm/h)   \big\vert _{s=R_{\rho_3,\mathbf{a}}} $.   Then  by using \eqref{eq-exist-ex-bd1-glo1} and the choice of   $\eae$ and $c_2$  in \eqref{eq-conditioneee}-\eqref{eq-conditionee3},   we have 
 \bea \label{eq-est-asymp-dV11}
\left.\frac{\p}{\p l}\right\vert _{V_+/h = L_+} (V_+/h)=  e^{-R_{\rho_3,\mathbf{a}}}\cdot \left.\frac{\p}{\p s}\right\vert _{s=R_{\rho_3,\mathbf{a}}} (V_+/h)&>  \left[1+ (1-\eae){\sigma c_2}\right]\left(1-\sigma c_2\right)^{\frac1\sigma -1} \left ( \sigma L_+\right)^{ 1-\frac{1}{\sigma}}  \\
&>\left[1+  \left(   2- {\sigma^{-1} } -3\eae\right) \sigma c_2\right]  \left ( \sigma L_+\right)^{ 1-\frac{1}{\sigma}}  \\
&{=\left(1+\eae\sigma c_2  \right) } \left ( \sigma L_+\right)^{ 1-\frac{1}{\sigma}} 
,  \eea
and similarly
\bea  \label{eq-est-asymp-dV21} 
\left. \frac{\p}{\p l}\right  \vert_{V_-/h = L_-} ({V_-}/{h}) <\left(1 -  \eae \sigma c_2\right)  \left ( \sigma L_-\right)^{ 1-\frac{1}{\sigma}} .
\eea  
  
   \medskip

\noindent{\bf Step 2.}   Global barriers  $S_\pm(l,\theta)$  will be constructed by    gluing  the exterior barrier  $V_\pm( \ln l, \theta)$ and a translated $U_\pm$, say $U_\pm(l-l_\pm,\theta)$ for some $l_\pm \in \mathbb{R}$, along $l = e^{R_{\rho_3,{\mathbf{a}} }}$.   Note that $\rho_3$ has not yet  been determined. 
 Let us   define
\bea S_\pm(l,\theta) := 
\begin{cases}
 \ba & V_\pm(\ln l,\theta) && \text{ for  } \,\, l \ge e^{R_{\rho_3,{\mathbf{a}} }}, \\
 &U_\pm(l-l_\pm,\theta) && \text{ for  }\,\, l_\pm\le l\le e^{R_{\rho_3,{\mathbf{a}} }},
 \ea
  \end{cases} 
  \eea 
  where  each $l_\pm$ is the unique number such that    $S_\pm$  is continuous along  $l= e^{R_{\rho_3,{\mathbf{a}} }}$. 
Selecting   $\rho_3=\rho_3(\alpha, h , M_\pm ) >\rho_2$   large   so that  
 \be
   (\sigma^{-1} -c_2)e^{\sigma \rho_3} >L_0 
   ,
   \ee
   and utilizing      \eqref{eq-est-asymp-dg2}, \eqref{eq-exist-ex-bd1-glo1}, \eqref{eq-est-asymp-dV11} and \eqref{eq-est-asymp-dV21}, it follows that 
 \be \label{eq-glob-barrier-angle-11}
 \left\{
 \ba
 &\lim_{\eps \to 0^-} \p_l S_+(e^{R_{\rho_3,{\mathbf{a}} }}+\eps ,\theta) <\lim_{\eps \to 0^+} \p_l S_+(e^{R_{\rho_3,{\mathbf{a}} }}+\eps ,\theta)\quad \text{ for all }\theta \in \mathbb{S}^1 ,  \\
  &\lim_{\eps \to 0^-} \p_l S_-(e^{R_{\rho_3,{\mathbf{a}} }}+\eps ,\theta) > \lim_{\eps \to 0^+} \p_l S_-(e^{R_{\rho_3,{\mathbf{a}} }}+\eps,\theta)\quad \text{ for all }\theta \in \mathbb{S}^1.
   \ea\right.
     \ee
     
 \medskip

Now we will    check that $\p_{ll}S_\pm<0$ on $   \{l>l_\pm : l  \not= e^{R_{\rho_3, {\mathbf{a}} }}\}\times \mathbb{S}^1$ 
to   obtain an   entire function  $u_\pm(x)$  corresponding   to   $S_\pm$ as in \eqref{eq-supportfunction}, respectively. 
Recalling   $V_{\pm}$ in \eqref{eq-Vpm}, we see that \be
l^2\, \p_{ll } S_\pm= \p_{ss} V_\pm-\p_s V_\pm=(\sigma-1)e^{\sigma s } h(\theta)+ (\p_{ss}\tilde w_{\mathbf{a}} -\p_s \tilde w_{\mathbf{a}} ) + (\p_{ss}w_\pm-\p_s w_\pm) 
\ee
for $l >  e^{R_{\rho_3,{\mathbf{a}} }}$. 
 Since $ \Vert \tilde w_{\mathbf{a}} \Vert_{C^{2,\beta,  \sgamma  }_{ R_{\rho_1,\mathbf{a}}} } \le  2c_1K e^{(\sigma -  \sgamma  ) R_{\rho_1, \mathbf{a}}}  $,   $\Vert w_\pm \Vert _{C^{2,\beta,-\frac12}_{ R_{\rho_2, \mathbf{a}}}}  \le 2  c_1 e^{(\sigma+\frac12){{ R_{\rho_2,\mathbf{a}}}}} $, and $ R_{\rho_1,\mathbf{a}} -   R_{\rho_3,\mathbf{a}} =\rho_1-\rho_3$, it follows that 
 \be 
l^2\, \p_{ll } S_\pm= \p_{ss} V_\pm-\p_s V_\pm \leq -2\alpha e^{\sigma s } h(\theta)+2 c_1K  e^{(\sigma -  \sgamma  )( \rho_1-\rho_3)}   e^{\sigma  s } +    2  c_1 e^{(\sigma+\frac12) (\rho_1-\rho_3)}  e^{\sigma s} 
\ee
for   $s>{R_{\rho_3,{\mathbf{a}} }}$.  
Choosing $\rho_3$ sufficiently large, we deduce  that   
\be
   \p_{ll } S_{\pm} <0\quad\hbox{and}\quad \p_{l } S_{\pm}> 0  \quad \hbox{on $   \{l >  e^{R_{\rho_3,{\mathbf{a}} }}\}\times \mathbb{S}^1$}.
\ee
Using properties of $f_{M_\pm}$ solving \eqref{eq-m-trans},  we deduce  that
  $\p_{ll}S_\pm<0$ and  $\p_{l } S_{\pm}> 0$  on $   \{l>l_\pm : l  \not= e^{R_{\rho_3, {\mathbf{a}} }}\}\times \mathbb{S}^1$.

   
   \medskip

\noindent{\bf Step 3.}   In light of    Proposition \ref{prop:graphicality_S}, 
   there exists an   entire function  $u_+(x)$  on $\mathbb{R}^2$ whose support function is    $S_+$, and $u_+$  is  a smooth   supersolution  to \eqref{eq-translatorgraph}  away from  the origin and the level curve $\{x\in \mathbb{R}^2:\, u_+(x)= e^{R_{\rho_3, {\mathbf{a}} }}\}$. Similarly, we obtain an entire function $u_-$ corresponding to $S_-$. 
   \smallskip

We will show  that $u_+$     (resp.  $u_-$) is  a viscosity supersolution   (resp.  subsolution). 
Observe that  $u_+$ is   a smooth   supersolution away from  the origin and the level set $\{u_+= e^{R_{\rho_3,{\mathbf{a}} }}\}$ and no smooth function can touch from above at a point on $\{u_+= e^{R_{\rho_3,{\mathbf{a}} }}\}$ by \eqref{eq-glob-barrier-angle-11}. So it remains to   prove that   $u_+$ is a viscosity supersolution to \eqref{eq-translatorgraph}  at the origin.  Let $v$ be a smooth strictly    convex function which touches $u_+ $   at $x=0$  from below.  Note that    $v(0)=l_+$ and $\nabla v(0)=0$.  
Let   $A(l)$ be the area enclosed by the level curve $\{v(x)=l\}$ for $l>l_+$.   Then we have 
\be \label{eq-est-area-A}
   A(l)= \frac{{2}\pi ( l-l_+) }{\sqrt{\det D^2v(0) +o(1)} }\qquad\hbox{as  $l\to l_+$.}
\ee  
Let $A_+(l)$ be the area enclosed by the level curve $\{u_+(x)=l\}$ for $l>l_+$, and let  $w$  be the  radial solution to $ \det D^2 w = ( M_++|Dw|^2)^{2-\frac1{2\alpha}}$  with $w(0)=l_+$, whose support function at level $l$  is $ f_{M_+}(l-l_+) $. From the definition of $S_+$ (or see \eqref{eq-defUpm}),   there is a constant $c=c({\alpha,h})>0$ such that 
\be 
A_+(l)=      \frac{   2 \pi c( l-l_+)}{\sqrt{\det D^2w(0) +o(1)} }=  \frac{ 2 \pi   c  ( l-l_+)}{\sqrt{M_+^{2-\frac{1}{2\alpha}} +o(1)} } \qquad\hbox{as  $l\to l_+$}
.\ee
 The touching implies an obvious inequality $ A_+(l) \le A(l) $, which further yields  that 
 \be\det D^2v(0) \leq C    M_+^{2-\frac{1}{2\alpha}}
 \ee
 with some  constant $C=C({\alpha,h})>0$.  
  By selecting $M_+$ large, we obtain  that  $
\det D^2v(0) <1.$ Combining above, $u_+$ is a viscosity supersolution  to \eqref{eq-translatorgraph}    on  $\mathbb{R}^2$. Similarly, by selecting $M_->0$ small, we obtain that  $u_-$ is a viscosity subsolution to \eqref{eq-translatorgraph} on $\mathbb{R}^2$.   
\medskip

  Finally, we prove  that $u_- \le u_+$ on $\mathbb{R}^2$.   Proposition \ref{prop-exist-exterior'1} implies  that for each   constant $\delta>0$,
\bea\label{eq-est-asymp-infty1}
S_+( l-\delta,\theta )&=  V_\pm( \ln(l-\delta),\theta )=  \sigma^{-1}(l-\delta )^{\sigma} h(\theta) +\tilde w _{ {{\mathbf{a}} }}(\ln(l-\delta),\theta) +O(l^{-\frac{1}{2}}),  \\
&= \sigma^{-1}(l-\delta)^{\sigma} h(\theta)   +\tilde w_{{{\mathbf{a}} }}(\ln l,\theta) +O(l^{ \bar \gamma-1}),\\
S_-( l,\theta )&= V_-( \ln l,\theta )= \sigma^{-1}l^{\sigma} h(\theta)  + \tilde w_{{{\mathbf{a}} }}(\ln l,\theta) +O(l^{-\frac{1}{2}})\qquad \hbox{as $l\to\infty$. }
\eea
Note that $   \sgamma -1, -\frac{1}{2}  <  \sigma-1=-2\alpha$.    Let $u_{+,\delta}$ be the entire graph obtained by $S_+( l-\delta,\theta ).$
For  each   $\delta>0$,  \eqref{eq-est-asymp-infty1}  implies that $ u_-\leq u_{+,\delta}$  on the exterior of a large ball, and then by the comparison principle, it follows that $ u_-\leq u_{+,\delta}$ on $\mathbb{R}^2$. Letting $\delta\to 0^+$,   we conclude that  $ u_-\leq u_+$    on $\mathbb{R}^2$. 
 \end{proof}

\medskip

{In order to prove the continuity of translators with respect to the parameter in Theorem \ref{thm-existence}, we first show   continuity of barriers on an exterior region.   }

  \begin{proposition}
 \label{thm-modulicont-barr} 
 Let  $\mathbf{a}_i\in\mathbb{R}^K$ for  $i=1,2$,  and    
   let  $w_{\mathbf{a}_i}^{\pm}$      be the  barriers given by  Proposition \ref{prop-global-barriers}  satisfying the relation \be
   S_{\mathbf{a}_i}^{\pm}(e^s,\theta)=    {\sigma}^{-1}   e^{\sigma s} h(\theta)+w_{\mathbf{a}_i}^{\pm}(s,\theta)\qquad \hbox{for  $i=1,2$}.
   \ee
 If  $\vert  \mathbf{a}_i \vert \le M$  ($i=1,2$)   for some $M>0$,  then there holds
\bea  \label{eq-modulicont-barr} 
 \Vert w_{\mathbf{a}_1}^{\pm}-w_{\mathbf{a}_2}^\pm \Vert _{C^{2,\frac13,\sgamma }_{R}}  \le  C_*  \vert \mathbf{a}_1 -\mathbf{a}_2\vert  , 
\eea 
where  $ {R}:=\displaystyle \max_{i=1,2} R_{\rho_3,  {\mathbf{a}}_i} $ and $C_*<\infty $ is a constant depending on $M$,  $\alpha$, and   $h$.
  \end{proposition}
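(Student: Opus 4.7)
\medskip

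\noindent\textbf{Proof proposal.} On the common exterior region $\{s \ge R\} \times \mathbb{S}^1$, I would write
\begin{equation*}
w_{\mathbf{a}_i}^{\pm}(s,\theta) = \tilde w_{\mathbf{a}_i}(s,\theta) + \bar w_\pm^{(i)}(s,\theta),
\end{equation*}
where $\tilde w_{\mathbf{a}_i}$ is the exterior translator of Theorem \ref{thm-existencerevised11} and $\bar w_\pm^{(i)}$ is the correction produced in Proposition \ref{prop-exist-exterior'1}. The plan is to estimate each of the two summands in the $C^{2, 1/3, \sgamma}_R$-norm separately and combine via the triangle inequality.

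\medskip

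For the exterior translators, I would insert the chain of hybrid parameters
\begin{equation*}
\mathbf{b}_0 := \mathbf{a}_2, \quad \mathbf{b}_j := \big((\mathbf{a}_1)_0, \ldots, (\mathbf{a}_1)_{j-1}, (\mathbf{a}_2)_j, \ldots, (\mathbf{a}_2)_{K-1}\big), \quad \mathbf{b}_K := \mathbf{a}_1,
\end{equation*}
so that consecutive $\mathbf{b}_j$ differ in only one coordinate, and apply Theorem \ref{thm-existencerevised11}(b) to each summand in
\begin{equation*}
\tilde w_{\mathbf{a}_1} - \tilde w_{\mathbf{a}_2} = \sum_{j=0}^{K-1} \big(\tilde w_{\mathbf{b}_{j+1}} - \tilde w_{\mathbf{b}_j}\big).
\end{equation*}
The assumption $|\mathbf{a}_i| \le M$ keeps all radii $R_{\rho_1, \mathbf{b}_j}$ bounded, so each piece enjoys a Lipschitz-in-parameter estimate in $C^{2, 1/2, \beta_j^+ - \delta}$ that is in turn dominated by $C^{2, 1/2, \sgamma}_R$ on the common region, yielding $\|\tilde w_{\mathbf{a}_1} - \tilde w_{\mathbf{a}_2}\|_{C^{2,1/2,\sgamma}_R} \le C |\mathbf{a}_1 - \mathbf{a}_2|$.

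\medskip

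For the corrections, I would rerun the iterative scheme \eqref{eq-u_0outer}--\eqref{eq-uk+1} in parallel for both parameters and track the discrepancy between iterates. The seed $u_0^{(i)}$ is a linear functional of the Dirichlet data $\tilde w_{\mathbf{a}_i}|_{s = R_{\rho_3, \mathbf{a}_i}}$ via the explicit series \eqref{eq-hatgdef}, so the first step immediately furnishes Lipschitz control of $u_0^{(1)} - u_0^{(2)}$ in the appropriate weighted norm. The second-difference formula \eqref{eq-integralrep2}, combined with Lemma \ref{lem-error-estimate} and Corollary \ref{cor-ex-linear}, shows that each higher iterate $u_{k+1}^{(1)} - u_{k+1}^{(2)}$ inherits the contraction factor $1/2$ from the fixed-point scheme used in Proposition \ref{prop-exist-exterior'1}, so the series converges geometrically and delivers a Lipschitz bound for $\bar w_\pm^{(1)} - \bar w_\pm^{(2)}$.

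\medskip

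The step I expect to be most delicate is extracting the Hölder exponent $1/3$ rather than $1/2$ in the target norm. As foreshadowed in Remark \ref{rem-two-exp}, Lipschitz-in-parameter contraction arises naturally only in a weighted $L^2_h$ norm, because the spectral inverse operators $H_{m, R}$ of Lemma \ref{lem-ex-linear} are linear in the data; uniform boundedness, on the other hand, is available only in $C^{2, 1/2, \sgamma}_R$. I would therefore first prove Lipschitz continuity in the weighted $L^2$ norm and then upgrade to $C^{2, 1/3, \sgamma}_R$ slab by slab via the interpolation inequality
\begin{equation*}
\|w\|_{C^{2, 1/3}_{s, 1}} \le C \|w\|_{L^2_{s, 1}}^{\vartheta} \|w\|_{C^{2, 1/2}_{s, 1}}^{1 - \vartheta},
\end{equation*}
absorbing the result into the weighted supremum defining $\|\cdot\|_{C^{2, 1/3, \sgamma}_R}$. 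Reconciling the possibly distinct starting radii $R_{\rho_3, \mathbf{a}_i}$ on the common domain $\{s \ge R\}$, and keeping the bookkeeping of the weights $\beta_j^+ - \delta$ uniform across the telescoping sum, form the remaining technical burden.
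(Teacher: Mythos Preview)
Your overall architecture (split $w_{\mathbf a}^{\pm}=\tilde w_{\mathbf a}+\bar w^{\pm}$, rerun the iteration for the correction, use interpolation) matches the paper, but your Step~1 has a real gap. Theorem~\ref{thm-existencerevised11}(b) only compares $\tilde w_{\mathbf a'}$ with $\tilde w_{\mathbf a}$ when $\mathbf a=(\mathbf 0_{j+1},\mathbf a_{K-j-1})$ and $\mathbf a'=(\mathbf 0_j,a_j,\mathbf a_{K-j-1})$, i.e.\ when the first $j$ coordinates are \emph{zero}; your hybrid points $\mathbf b_j$ have the first $j$ slots filled with the coordinates of $\mathbf a_1$, so (b) does not apply to $\tilde w_{\mathbf b_{j+1}}-\tilde w_{\mathbf b_j}$. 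Even if one restricts to the admissible pairs, (b) gives $\tilde w_{\mathbf a'}-\tilde w_{\mathbf a}=a_j(e^{\beta_j^+s}\varphi_j+g_{\mathbf a'})$, so comparing two values $a_j',a_j''$ of the $j$-th coordinate leaves the residual $a_j'g_{\mathbf a'}-a_j''g_{\mathbf a''}$, and no Lipschitz bound on $g_{\mathbf a'}-g_{\mathbf a''}$ is provided. The paper does not telescope via (b); it re-enters the fixed-point construction and proves a \emph{recursive} estimate of the form
\[
\Vert \tilde w_{\mathbf a_1'}-\tilde w_{\mathbf a_2'}\Vert \le C\big[\Vert \tilde w_{\mathbf a_1}-\tilde w_{\mathbf a_2}\Vert + C_*(|\mathbf a_1-\mathbf a_2|+|d_1-d_2|)\big],
\]
which is then applied $K$ times. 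The substance of that estimate is precisely the part you flagged as ``remaining technical burden'': because $\tilde w_{\mathbf a_i'}$ is built using $H_{I,\tilde R_i}$ with \emph{different} Dirichlet radii $\tilde R_i$, one must bound $(\tilde H_1-\tilde H_2)g$ on $s\ge \tilde R_2$, where no boundary condition is available for $\tilde H_1$. This is done by an explicit $L^2_h$ estimate of size $(\tilde R_2-\tilde R_1)$ together with the uniform $C^{2,1/2}$ bound, and \emph{that} is where the interpolation inequality enters.

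This also corrects your description of the role of interpolation. Your plan ``prove $L^2$-Lipschitz, then interpolate against the uniform $C^{2,1/2}$ bound'' yields only $\Vert w_{\mathbf a_1}^{\pm}-w_{\mathbf a_2}^{\pm}\Vert_{C^{2,1/3}}\le C|\mathbf a_1-\mathbf a_2|^{\vartheta}$, i.e.\ H\"older and not Lipschitz in the parameter. In the paper the bulk of the difference estimate (your ``Term~I'') is carried out directly in the $C^{2,1/3}$ scale using Corollary~\ref{cor-ex-linear} and Lemma~\ref{lem-error-estimate} with $\beta=\tfrac13$; interpolation is invoked only for the operator-mismatch term $(\tilde H_1-\tilde H_2)g$ and, in Step~2, for the analogous mismatch in the seeds $\hat g_1-\hat g_2$, where the boundary data are imposed at different radii. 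In short, you have identified the right ingredients, but the Lipschitz control of $\tilde w_{\mathbf a_1}-\tilde w_{\mathbf a_2}$ cannot be read off from Theorem~\ref{thm-existencerevised11}(b) and must be proved by tracking the iteration with the two different inverse operators, which is exactly the step that forces the $1/3$ versus $1/2$ interpolation trick.
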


\begin{proof}

\noindent{\bf Step 1.}   Recall that  for  $\mathbf{a}\in \mathbb{R}^K$,  {$\tilde w_{\mathbf{a} } \in C^{2,\frac12,\sgamma}_{R_{\rho_1,\mathbf{a}}}$} is   the exterior translator  constructed in Theorem \ref{thm-existencerevised11}. 
In this step, we show
 \be  \label{eq-wdiff1}
  \Vert \tilde w _{\mathbf{a}_1}  -  \tilde w _{\mathbf{a}_2} \Vert_{C^{2,\frac13,\sgamma }_ {R_*}} \le   C_* |\mathbf{a}_1-\mathbf{a}_2|   
 \ee 
  {with ${R}_*:= \displaystyle\max_{i=1,2} R_{\rho_1,  {\mathbf{a}}_i} $}   for some positive constant  $C_* $  depending  on $M$,  $\alpha$, and   $h$. 
Indeed, we will prove that 
 \be  \label{eq-wdiff0}
 \Vert \tilde w_{\mathbf{a}_1' }- \tilde w_{\mathbf{a}_2' }  \Vert _{C^{2,\frac13, \sgamma }_{ { \tilde {R}}}}  \le  C   \left[ \Vert \tilde w _{\mathbf{a}_1}  -  \tilde w _{\mathbf{a}_2} \Vert_{C^{2,\frac13,\sgamma }_ {\tilde R}}+{C_*}e^{(\beta^+_I -\sgamma  ) {\tilde R}}(|\mathbf{a}_1-\mathbf{a}_2|+ |d_1-d_2 | ) \right] 
 \ee  
 for parameters of form  
 ${\mathbf{a}}_i':= \mathbf{a}_i+d_i\mathbf{e}_{I+1}$  ($i=1,2$), with 
 \bea\label{eq-ai-ci}
 \mathbf{a}_i = (\mathbf{0}_{I+1},\mathbf{c}_i) \in \mathbb{R}^K,\quad \mathbf{c}_i \in \mathbb{R}^{K-I-1}, \quad 0\le I \le K -1 ,  \quad d_i\in \mathbb{R} \quad  (i=1,2),  
 \eea
 {where
 \be\label{eq-R*} 
 \tilde {R}:= \max_{i=1,2} R_{\rho_1,  {\mathbf{a}}_i'}    
 \ee 
 with $\rho_1$ appearing in Theorem \ref{thm-existencerevised11}.  
 Here and below, $C$   depends on $\alpha$ and  $h$,  and      ${C_*}$ may additionally depend on $ \max_{i=1,2}   \vert  {{\mathbf{a}}_i'} \vert $. } 
Then  \eqref{eq-wdiff1}   follows by     applying  \eqref{eq-wdiff0} at most $K$ times. 
 \medskip
 
In order to show   \eqref{eq-wdiff0}, we recall from  the proof of Theorem \ref{thm-existencerevised11} that  
\be
\tilde w_{ {\mathbf{a}}_i'} = \tilde w_{\mathbf{a}_i}+\sum_{j=0}^\infty \tilde u_{i,j} ,
\ee where 
    $\tilde u_{i,j}$$\in   C^{2,{\frac12} ,\sgamma}_{ \tilde  R_ i}$ ($i=1,2,$  $j=0,1,2,\ldots$) are   defined by \eqref{eq-deftildeu1} as follows: 
 \be\label{eq-deftildeu_uij}
  \begin{cases} \ba 
 &\tilde u_{i,0}:= d_i e^{\beta^+_Is} \varphi_I,\\  
&\tilde u_{i,1}:=  \tilde H_i\Big(E(\tilde w_{\mathbf{a}_i}+\tilde u_{i,0}) -E (\tilde w_{\mathbf{a}_i})\Big), \\
&\tilde u_{i,k+1}:=  \tilde H_i \Big(E(\tilde w_{\mathbf{a}_i}+ \sum _{j=0}^k \tilde u_{i,j})-E(\tilde w_{\mathbf{a}_i}+ \sum _{j=0}^{k-1} \tilde u_{i,j})\Big)\quad \hbox{for $k\ge1$ }.
\ea \end{cases}
\ee 
 Here and below, we   simply denote 
  \be \label{eq-def-tildeR}
  \tilde R_i: =  R_{\rho_1, {\mathbf{a}}_i' },  \quad  \quad \tilde H_i:= H_{I, \tilde R_i}\qquad  \hbox{for $i=1,2$},
  \ee
  and    
 {
 $\Vert \cdot \Vert_\gamma:= \Vert\cdot \Vert _{C^{2,\frac13,\gamma }_{ \tilde R}}$ for  $\gamma\in \mathbb{R}$. 
 }
 

\medskip
Firstly, we prove that 
 an recursive inequality: for all $k\ge0$, 
  \bea  \label{eq-225}
  \big\Vert \sum _{j=1}^{k+1} (\tilde u_{1,j}- \tilde u_{2,j}) \big\Vert_{\beta^+_I-\delta  } 
 & \le \frac12 \big\Vert \sum_{j=1}^k( \tilde u_{1,j}-  \tilde u_{2,j} )\big\Vert_{\beta^+_I-\delta}   \\
 & \quad + Ce^{\delta   \tilde R}\Big [ |d_2| e^{(\sgamma -\sigma) {\tilde R}}  \big\Vert \tilde w_{\mathbf{a}_1} - \tilde w_{\mathbf{a}_2}\big\Vert_{\sgamma}   + C_*( |\mathbf{a}_1-\mathbf{a}_2|+ |d_1-d_2 |  )\Big ], 
 \eea
where  $C$ and $C_*$ {are constants with previously stated dependencies, and  $\delta>0$ is  the constant defined in  \eqref{eq-deltagap}.} 
Here and below, 
we interpret $\sum_{j=1}^0 (\tilde u_{1,j}-  \tilde u_{2,j})$ as zero.
If the estimate \eqref{eq-225} holds true for all $k\geq0$, then  using  $\bar \gamma=\sigma-\delta$  and the choice of $\tilde R$ in \eqref{eq-R*}  
implies 
\be \label{eq-476} 
\Vert  \sum_{j=1}^\infty  (\tilde u_{1,j}- \tilde  u_{2,j}) \Vert_{\sgamma  }   \le C \Big[\Vert \tilde w_{\mathbf{a}_1}- \tilde w_{\mathbf{a}_2} \Vert_{\sgamma  } +C_* e^{(\beta^+_I -\sgamma ) {\tilde R}} ( |\mathbf{a}_1-\mathbf{a}_2|+ |d_1-d_2 |)    \Big].
\ee 
Here, we used
$
|d_2| e^{(\beta^+_I -\sigma ) {\tilde R}}\le e^{(\beta^+_I -\sigma)\rho_1} \le 1 
$. This proves \eqref{eq-wdiff0}. 

 \medskip
Now it suffices to prove \eqref{eq-225}. 
We remind two important facts: first,   the corresponding  linear operators $\tilde H_i$'s for $\tilde u_{i,j}$ ($j=0,1,\ldots$) are different associated with  the zero Dirichlet boundary condition   imposed on different radii $\tilde R_i$'s.  
 Second, the operators $\tilde H_i$ are defined on the spaces $C^{0,\beta, \beta^+_I-\delta}_{\tilde R_i}$  ($\beta=\frac13,\frac12$) for $i=1,2$.  
 
 \smallskip
 
 Without loss of generality, we  assume that  $|\mathbf{b}({\mathbf{a}}_2' )|\ge| \mathbf{b}({\mathbf{a}}_1'  )| $ and thus $\tilde R_2=\tilde R$. From  linearity of $\tilde H_i$,  it holds that for   $k\ge0$, 
\bea 
\Vert \sum _{j=1}^{k+1} (\tilde u_{1,j}- \tilde u_{2,j}) \Vert_{\beta^+_I -\delta}
&\le   \big\Vert \tilde  H_2 ( E(\tilde  w_{\mathbf{a}_1}+ \sum_{j=0}^k \tilde u_{1,j} )-E(\tilde w_{\mathbf{a}_1}) -E( \tilde w_{\mathbf{a}_2}+ \sum_{j=0}^k \tilde u_{2,j} )+   E(\tilde w_{\mathbf{a}_2})   )\big\Vert_{\beta^+_I -\delta}
\\
&\quad + \big\Vert (\tilde H_1-\tilde H_2) ( E( \tilde w_{\mathbf{a}_1}+ \sum_{j=0}^k \tilde u_{1,j} )-E(\tilde w_{\mathbf{a}_1}))\big\Vert_{\beta^+_I -\delta} 
\\
&=: \text{\bf I}+ \text{\bf II}.
\eea 
We first  estimate the term $\text{\bf I}$.  Let us decompose the term in the argument of $\tilde H_2$ as 
\bea\label{eq-454} & \Big [  E(\tilde  w_{\mathbf{a}_1}+ \sum_{j=0}^k (\tilde u_{1,j}-\tilde u_{2,j} )) -E(\tilde w_{\mathbf{a}_1})  \Big] \\
&+    \Big \{ [E( \tilde w_{\mathbf{a}_1}+ \sum_{j=0}^k \tilde u_{1,j} )-   E(\tilde  w_{\mathbf{a}_1}+ \sum_{j=0}^k (\tilde u_{1,j}-\tilde u_{2,j} )) ]  -[E( \tilde w_{\mathbf{a}_2}+ \sum_{j=0}^k \tilde u_{2,j} )-   E(\tilde w_{\mathbf{a}_2})]\Big \}   \eea
so to apply \eqref{eq-est-error-diff1}  and \eqref{eq-est-error-diff2} in Lemma \ref{lem-error-estimate}, respectively.  Estimating  the     terms in \eqref{eq-454},  
 note  from  
 \eqref{eq-448bound} and \eqref{eq-conv-u_i}  that for $i=1,2$, 
 \bea\label{eq-err-conv-u_i}  &  e^{ (\sgamma -\sigma) {\tilde R_i}} \Vert\tilde w_{\mathbf{a}_i} \Vert_{\sgamma}\leq 2c_1K ,\qquad e^{ (\beta^+_I -\sigma) { \tilde R_i}}  \sum_{j=0}^\infty   \Vert\tilde u_{i,j}   \Vert_{\beta^+_I}\leq e^{ (\beta^+_I -\sigma) {\tilde R_i}} \cdot 2|d_i| { C_2}\leq 2c_1e ^{(\beta^+_I-\sigma )\rho_1} .
\eea
 Using \eqref{eq-c-prefix}-\eqref{eq-R0-prefix},   Corollary \ref{cor-ex-linear}    and  Lemma \ref{lem-error-estimate} imply    
\bea  
\text{\bf I}&\le C_0 C_1  \big( 2 c_1 K+ 5c_1 \big)\cdot e^{ \delta   {  \tilde R_2}}  \big\Vert \sum_{j=0}^k (\tilde u_{1,j}-  \tilde u_{2,j})  \big\Vert_{\beta^+_I}
\\ 
 &\quad + C_0 C_1 e^{( \delta  +\sgamma -\sigma  ) {{  \tilde R_2}}}  \big\Vert \sum_{j=0}^k \tilde u_{2,j}  \big\Vert _{\beta^+_I} 
 \,\, \big\Vert \tilde w_{\mathbf{a}_1}- \tilde w_{\mathbf{a}_2} +\sum_{j=0}^k( \tilde u_{1,j}-  \tilde u_{2,j})  \big\Vert_{\sgamma}
 \\
&\le  \frac12 \Big(e^{\delta  {{ \tilde R_2}}}|d_1-d_2|+   \big\Vert \sum_{j=1}^k( \tilde u_{1,j}-  \tilde u_{2,j})  \big\Vert_{\beta^+_I -\delta}
\Big) +C |d_2|e^{( \delta  +\sgamma -\sigma  ) {  \tilde R_2}}  \big\Vert \tilde w_{\mathbf{a}_1}- \tilde w_{\mathbf{a}_2}  \big\Vert_{\sgamma}.
 \eea
 Thus it holds that 
\bea\label{eq-2299'} 
\text{\bf I}&
\le\frac12  \big\Vert \sum_{j=1}^k \tilde u_{1,j}-  \tilde u_{2,j}   \big\Vert_{\beta^+_I -\delta}
+ C   e^{\delta  {\tilde R_2}}\left[ |d_2| e^{(\sgamma -\sigma) {\tilde R_2} }  \Vert \tilde w_{\mathbf{a}_1}-\tilde w_{\mathbf{a}_2}\Vert_{\sgamma}
+|d_1-d_2|\right] .
 \eea 


\medskip

Next, let us estimate the second term $\text{\bf II}$ which involves $\tilde H_1 -\tilde H_2$. 
Recalling from \eqref{eq-wj}-\eqref{eq-wj2},   $w= (\tilde H_1 -\tilde H_2 )g$ is given by $w(s,\theta)=\sum_{j=0}^\infty  w_j(s) \varphi_j(\theta)$, where 
\begin{align} 
  w_j (s)&= -e^{\beta^-_j s} \int_{\tilde R_1}^{\tilde R_2} e^{(\beta^+_j-\beta^-_j)r }\int_r^\infty e^{-\beta^+_j t}g_j(t) dt dr \quad \text{ for  }j \ge I,  \label{eq-wj''}\\
  w_j (s)&= e^{\beta^-_j s} \int_{\tilde R_1}^{\tilde R_2} e^{(\beta^+_j-\beta^-_j)r }\int_{\tilde R_1} ^r e^{-\beta^+_j t}g_j(t) dt dr \label{eq-wj1''}\\ 
&\quad + e^{\beta^-_j s} \int_{\tilde R_2}^{s} e^{(\beta^+_j-\beta^-_j)r }\int_{\tilde R_1} ^{\tilde R_2} e^{-\beta^+_j t}g_j(t) dt dr \quad  \text{ for }0\le j <I \label{eq-wj2''}.
\end{align}
We aim to show, for $g=E( \tilde w_{\mathbf{a}_1}+ \sum_{j=0}^k \tilde u_{1,j} )-E(\tilde w_{\mathbf{a}_1})$,
\bea \label{eq-est-diff-II}
\mathbf{II}=\Vert  w \Vert_{C^{2,{\frac13},\beta^+_I-\delta}_{ \tilde R_2}}  &\le C  ( \tilde R_2 - \tilde R_1)^\vartheta \Vert g\Vert _{C^{0,{\frac{1}{2}},\beta^+_I-\delta }_{ \tilde R_1}}  
\eea 
{for some uniform constants $C >0$ and $0< \vartheta<1$.}  
The estimate follows by a similar argument to the proof of  Lemma \ref{lem-ex-linear}. A technical difficulty lies in the fact that one can not apply the elliptic regularity theory on $s\ge \tilde R_2$ upto the boundary $s=\tilde R_2$.   We bypass this by introducing a stronger holder exponent $1/2$ in  \eqref{eq-est-diff-II}.

Let us momentarily assume \eqref{eq-est-diff-II}. 
Observe  that 
\bea
\big\Vert E( \tilde w_{\mathbf{a}_1}+ \sum_{j=0}^k \tilde u_{1,j} )-E(\tilde w_{\mathbf{a}_1})  \big\Vert _{C^{0,{\frac1{2}},\beta^+_I-\delta  }_{  \tilde{R}_1}}  &\le   C e^{\delta   \tilde{R}_1}\big\Vert \sum_{j=0}^k \tilde u_{1,j} \big\Vert_{C^{2,{\frac{ 1}{2}},\beta^+_I}_{ \tilde{R}_1}} 
  \le C  e^{\delta_1  \tilde{R}_1 } |d_1|  
\eea  by   Lemma \ref{lem-error-estimate} and a similar argument as for   \eqref{eq-err-conv-u_i}. 
Then  we use     the estimate  \eqref{eq-est-diff-II}   to obtain   
\bea \label{eq-4781}
\text{\bf II} 
&\leq  C|d_1|   e^{\delta  \tilde{R}_1} \Big[\ln (|\mathbf{b}(\mathbf{a}_2' )|+1)-\ln ( |\mathbf{b}(\mathbf{a}'_1 )|+1) \Big]^\vartheta  \le C_* e^{\delta \tilde{R}_1 }  \left|\mathbf{a}_2' -  \mathbf{a}_1'   \right|, 
\eea 
and then the estimates   \eqref{eq-2299'}   and \eqref{eq-4781} imply \eqref{eq-225}.

 \medskip

To finish \textbf{Step 1}, it remains to prove \eqref{eq-est-diff-II}.  First, there holds the $L^2$-estimate  
 \bea\label{eq-L-2-linear-sol1}
  e^{-(\beta^+_I-\delta ) s}\Vert w(s,\cdot)\Vert_{L^2_h(\mathbb{S}^1)} 
 \le C( \tilde R_2 - \tilde R_1) \Vert g \Vert_{C^{0,0,\beta^+_I-\delta }_{ \tilde R_1} }\qquad\hbox{ for all $s\geq  
 \tilde R_2$}, 
  \eea  
which is an analogous estimate corresponding Claim \ref{cla-L-infty-ext} in Lemma \ref{lem-ex-linear}.
Indeed, for \eqref{eq-L-2-linear-sol1}, we follow   the proof of Claim \ref{cla-L-infty-ext} after replacing  $\gamma$ and $\delta$ in the claim by $\beta_I^+-\delta$ and $\delta_1=\delta/2$, respectively. Then    we have  that for $j\geq I$, 
\be
\ba
 e^{-2(\beta^+_I-\delta) s} {w_j^2(s)}  &\le  C e^{ 2(\beta^-_j -\beta^+_I+\delta) s}  \int_{\tilde R_1} ^{\tilde R_2}  e^{2(\beta^+_I-\delta- \beta^-_j -\delta_1)  r}dr \int_{\tilde R_1}^{\tilde R_2} e^{4\delta_1 r} \int_r^\infty e^{-2(\beta^+_I-\delta+\delta_1) t}g^2_j(t) dtdr . 
\ea\ee
The choice $\delta_1= \delta/2$ implies $2(\beta^+_I -\delta -\beta^-_j -\delta_1)>0$. Then    for $j\geq I$, 
\be
\ba
 e^{-2(\beta^+_I-\delta) s} {w_j^2(s)}  
& \le  C   (\tilde R_2 -\tilde R_1) e^{ 2(\beta^-_j-\beta^+_I+\delta)( s-\tilde R_2) } e^{-2\delta_1 \tilde R_2}\int_{\tilde R_1}^{\tilde R_2}e^{4\delta_1 r} \int_r^\infty e^{-2(\beta^+_I-\delta+\delta_1) t}g^2_j(t) dtdr , 
\ea\ee
and by adding above, it follows from $\beta^-_j -\beta^+_I+\delta<0$  that for $s\ge \tilde R_2$,  
\bea \label{eq-est-L2-jgeqI} 
e^{-2(\beta^+_I-\delta) s}\sum _{j\ge I}    w_j^2(s) 
\le C(\tilde R_2-\tilde R_1)^2\sup_{s\ge \tilde R_1} \Vert   e^{-(\beta^+_I-\delta )s} g(s)  \Vert_{L^2_h(\mathbb{S}^1)}^2. 
\eea 
One can similarly argue for $0\le j< I$  in  \eqref{eq-wj1''}-\eqref{eq-wj2''} and obtain  that for $s\ge \tilde R_2$,  
\be
 e^{-2(\beta^+_I-\delta) s}\sum _{0\le j< I}   w_j^2(s) \le C (\tilde R_2-\tilde R_1)^2\sup_{s\ge \tilde R_1} \Vert   e^{-(\beta^+_I-\delta )s} g(s)  \Vert_{L^2_h(\mathbb{S}^1)}^2.
\ee
Adding two estimates, \bea  \label{eq-4761} \sup_{s\ge \tilde R_2} \Vert e^{-(\beta^+_I-\delta )s} w(s)\Vert _{L^2_h(\mathbb{S}^1)}   \le C(\tilde R_2-\tilde R_1)  \sup_{s\ge \tilde R_1} \Vert   e^{-(\beta^+_I-\delta )s} g(s)  \Vert_{L^2_h(\mathbb{S}^1)},  \eea 
which  implies  \eqref{eq-L-2-linear-sol1}.

Let us proceed to \eqref{eq-est-diff-II}.  
On each subdomain $ [s-1,s+1]\times \mathbb{S}^1$,  from the Sobolev inequality and the interpolation inequalities in Sobolev spaces and H\"older spaces, there holds that 
\be \label{est-GN}
\Vert w \Vert _{C^{2,{\frac13}}_{s,1}}\le C \Vert w \Vert_{L^2_{s,1}}^{\vartheta} \Vert w \Vert ^{1-\vartheta} _{C^{2,{\frac{ 1}{2}}}_{s,1}}
\ee
{for some uniform constants $C >0$ and $0< \vartheta<1$.}     Note that, by Corollary \ref{cor-ex-linear} applied to $\tilde H_ig$  ($i=1,2$),  
\bea \label{eq-4762} \Vert w \Vert _{C^{2,\frac12,\beta^+_I-\delta }_{\tilde R_2 }} \le	 C \Vert g \Vert  _{C^{0,\frac12,\beta^+_I-\delta }_{\tilde R_1 }} \eea
  for some constant  $C=C(\alpha,h)$. 
 Thus, by \eqref{eq-L-2-linear-sol1} and \eqref{eq-4762},  we deduce that
\eqref{eq-est-diff-II} holds true. This finishes {\bf Step 1}.



\bigskip

\noindent{\bf Step 2.}
The  estimate \eqref{eq-modulicont-barr}   for barriers $w_{\mathbf{a}_i }^{\pm}$ follows by an adopting similar method of {\bf Step 1} to the construction of exterior barriers. 
 Let us  simply denote 
  {\be 
  R_i: =  R_{\rho_3, \mathbf{a}_i},  \quad  \quad H_i:= H_{0, R_i}\qquad  \hbox{for $i=1,2$}.
  \ee  
  and let us assume $R_2\ge R_1$.} Recall from  the proofs of   Proposition \ref{prop-global-barriers} and Proposition \ref{prop-exist-exterior'1}, the barriers $w_{\mathbf{a}_i } ^\pm$ are defined as follows:  
\be
w_{\mathbf{a}_i } ^\pm= \tilde w_{\mathbf{a}_i } + \sum_{j=0}^\infty u_{i,j}^{\pm}, 
\ee
where    $ u_{i,j}^{\pm}$'s in $C^{2,\frac12,-\frac12}_{R_i}$ ($i=1,2,$  $j=0,1,2,\ldots$) are defined by 
\be \begin{cases}
 \ba & u_{i,0}^{\pm}:= \hat  g_i (s,\theta) \mp c_2e^{\sigma  R_i} e^{\beta^-_0 (s-  R_i)}h(\theta)  ,\\  
& u_{i,1}^{\pm}:= H_i\Big(E(\tilde w_{\mathbf{a}_i}+u_{i,0}^{\pm}) -E (\tilde w_{\mathbf{a}_i})\Big), \\
& u_{i,k+1}^{\pm}:= H_i \Big(E(\tilde w_{\mathbf{a}_i}+ \sum _{j=0}^k u_{i,j}^{\pm})-E(\tilde w_{\mathbf{a}_i}+ \sum _{j=0}^{k-1} u_{i,j}^{\pm})\Big)\quad\hbox{for $k\ge1$ }.
\ea \end{cases}
\ee   Here, $ \hat  g_i$'s ($i=1,2$)  satisfy  \eqref{eq-hat-g-def} with $\tilde w_{\mathbf{a}_i }$    having   the expression \eqref{eq-hatgdef}. 
By arguing similarly as {\bf Step 1} and employing estimates in the proof of Proposition \ref{prop-exist-exterior'1}, we have 
 \be  \label{eq-est-uij} 
 \big\Vert  \sum_{j=1}^\infty  (  u_{1,j}^\pm-    u_{2,j}^\pm) \big\Vert_{C^{2,\frac13,\sgamma}_{R_2} } \le  C \Big[ \Vert  u_{1,0}^\pm-  u_{2,0}^\pm\Vert_{C^{2,\frac13,\sgamma}_{R_2} }  +    \big\Vert \tilde w_{\mathbf{a}_1}- \tilde w_{\mathbf{a}_2} \big\Vert_{C^{2,\frac13,\sgamma }_{R_2} } +  e^{(\sigma-\sgamma)R_1}(R_2-R_1)^\vartheta   \Big] 
\ee 
for some $0<\vartheta<1$. Here, we used \eqref{eq-inductionstrong1},    and \eqref{eq-est-diff-II} with $\beta^+_I-\delta $ replaced by $\sgamma$ for instance.   
Here and below, $C$   depends on $\alpha$ and  $h$,  and      $C_*$ may additionally depend on $ \max_{i=1,2}   \vert  {{\mathbf{a}}_i} \vert $.

Lastly, it remains to estimate $   u_{1,0}^\pm-  u_{2,0}^\pm $.   In light of \eqref{eq-hatgdef}, we have 
 \bea \label{eq-hatgdef-i}
 \tilde w_{\mathbf{a}_i} ( R_i ,\theta) =\displaystyle \sum_{j=0}^\infty d_{i,j}  \varphi_j(\theta) ,\qquad  \hat g_i(s,\theta) = - \sum _{j=0}^\infty d_{i,j} e^{\beta^-_j (s-R_i )} \varphi_j(\theta)\quad \text{ for $s\geq R_i $} 
 \eea 
with some  constants  $d_{i,j}\in\mathbb{R}$   ($i=1,2$, $j=0,1,2,\ldots$). To estimate $ \hat g_1-  \hat g_2$, consider 
\bea \label{eq-hatgdef-ex}
 \tilde w_{\mathbf{a}_1} ( R_2 ,\theta) =\displaystyle \sum_{j=0}^\infty d_{j}  \varphi_j(\theta) ,\qquad  \hat g(s,\theta) = - \sum _{j=0}^\infty d_{j} e^{\beta^-_j (s-R_2)} \varphi_j(\theta)\quad \text{ for $s\geq R_2$} 
 \eea    with some  constants  $d_j\in\mathbb{R}$ for $j\geq0$. 
Then we have 
\bea
e^{-\sgamma s}\Vert\hat g( s ,\cdot) -  \hat g_2( s ,\cdot)\Vert_{L^2_h(\mathbb{S}^1)}
\leq  C\Vert\tilde w_{\mathbf{a}_1} -\tilde w_{\mathbf{a}_2}  \Vert_{C^{0,\frac13,\sgamma}_{R_2}}
\qquad \hbox{for $s\geq R_2$}
\eea 
 and 
\bea
 \Vert\hat g_1( s ,\cdot) -  \hat g( s ,\cdot)\Vert_{L^2_h(\mathbb{S}^1)}^2
& \leq 2  \sum _{j=0}^\infty \Big[ |d_{1,j} -d_{j}|^2 + \big|d_{1,j} ( e^{\beta_j^-(R_2-R_1)}-1)\big|^2 \Big]e^{2\beta_j^-(s-R_2)}\\
 & \leq 2  \sum _{j=0}^\infty \Big[|d_{1,j} -d_{j}|^2 + \big| d_{ 1,j}  \beta_j^-   (R_2-R_1) \big|^2e^{2\beta_j^-(s-R_2)} \Big]
\qquad \hbox{for $s\geq R_2$}.
\eea 
Since
\bea\sum_{j=0}^{\infty}|d_{1,j} -d_{j}|^2= \Vert \tilde w_{\mathbf{a}_1}(R_1,\cdot)-\tilde w_{\mathbf{a}_1}(R_2,\cdot)\Vert _{L^2 _h(\mathbb{S}^1)}^2, \quad\sum_{j=0}^\infty \big| d_{{1,}j}  \beta_j^-   \big|^2= \Vert \partial_s \tilde w _{\mathbf{a}_1}(R_1,\cdot)\Vert_{L^2_h(\mathbb{S}^1)}^2, \eea 
it follows that 
\bea
e^{-\sgamma s} \Vert\hat g_1( s ,\cdot) -  \hat g( s ,\cdot)\Vert_{L^2_h(\mathbb{S}^1)} 
 & \leq  C (R_2-R_1) \Vert \tilde w_{\mathbf{a}_1}\Vert_{C^{2,\frac13,\sgamma}_{R_1}}
 \qquad \hbox{for $s\geq R_2$}.
\eea  
Using \eqref{est-GN} and \eqref{est-hat-g}, we obtain 
 \bea 
\Vert\hat g_1-  \hat g_2\Vert_{C^{2,\frac13,\sgamma}_{R_2}}\leq      C_*\Big[ \Vert\tilde w_{\mathbf{a}_1} -\tilde w_{\mathbf{a}_2}  \Vert_{C^{2,\frac13,\sgamma}_{R_2}}^\vartheta +   (R_1-R_2)^\vartheta  \Big] ,
 \eea 
and  this together with \eqref{eq-wdiff1} implies that 
\bea  \Vert   u_{1,0}^{\pm} -  u_{2,0}^{\pm}   \Vert_{C^{2,\beta,\sgamma}_{R_2} }&\le  \Vert\hat g_1-  \hat g_2\Vert_{C^{2,\frac13,\sgamma}_{R_2}}+ C   e^{ (\sigma-\sgamma) R_2 } \big(1-e^{(\sigma-\beta_0^-) (R_1-R_2)}\big) \leq  C_*  |\mathbf{a}_1-\mathbf{a}_2|   .
 \eea
Therefore   we conclude  from \eqref{eq-est-uij} and {\bf Step 1}  that 
 \bea
\Vert w_{\mathbf{a}_1}^{\pm}-w_{\mathbf{a}_2}^\pm \Vert _{C^{2,\frac13,\sgamma }_{ {R_2}}}  
&\le     \big\Vert \tilde w_{\mathbf{a}_1}- \tilde w_{\mathbf{a}_2} \big\Vert_{C^{2,\frac13,\sgamma }_{R_2} } + \big\Vert  \sum_{j=0}^\infty  (  u_{1,j}^\pm-    u_{2,j}^\pm) \big\Vert_{C^{2,\frac13,\sgamma}_{R_2} }\le       C_*|{\mathbf{a}_1}-{\mathbf{a}_2 }|,   
\eea 
which completes the proof. 

\end{proof}

\section{Proof of Theorem \ref{thm-existence}} \label{sec-thm-existence}

\begin{proof}[Proof of Theorem  \ref{thm-existence}]

  {\bf Step 1.} 
  For a given each   exterior translator $\tilde w_{\mathbf{a}} $ for $\mathbf{a}\in \mathbb{R}^K$, let $S_+$ and $S_-$ be the global barriers obtained in Proposition \ref{prop-global-barriers}. 
Let us use their graphical representations $u_+$ and $u_-$ to construct an entire solution located between two. 
Note that $u_+$ is not globally convex due to its gluing. If we consider the convex envelop of $u_+$, say $\bar u_+$, then it is straightforward to check that $u_- \le \bar u_+ \le u_+ $ on $\mathbb{R}^2$ and $\bar u_+$ is a convex viscosity supersolution to \eqref{eq-translatorgraph}. \medskip

To construct   an entire solution to the translator equation  \eqref{eq-translatorgraph}, we shall use     its dual problem by  the  Legendre transformation.     For     any proper function $ \psi: \mathbb{R}^2\to \mathbb{R}\cup\{
+\infty\}$,    the Legendre dual  $ \psi^*: \mathbb{R}^2\to \mathbb{R}\cup\{
+\infty\}$  of $\psi$  is defined  by 
\bea   \psi^*(y) := \sup_{x\in \mathbb{R}^2} \,\left\{  \langle y,x\rangle  - \psi(x)\right\}. \eea 
Here, we refer to  \cite[Chapter 2]{MR1964483}  for preliminary properties of  the Legendre dual transformation.
\smallskip

Let  $v_+$ and $v_-$ be  the Legendre duals of $u_+$ and $u_-$, respectively.  Note that the legendre duals of $u_+$ and $\bar u_+$ are the same as $v_+$.
Since   $u_\pm $  are asymptotic to the homogeneous function   by Proposition \ref{prop-global-barriers}, there are constants $C>0$ and $d>0$ such that 
 \bea\label{eq-uL1} C^{-1} |x|^{\frac{1}{ 1-2\alpha }} \le u_-(x) \le u_+(x) \le C |x|^{\frac{1}{ 1-2\alpha }} \quad \text{ on } |x|\ge d .  \eea  For the duals, we have that there are constants $C'>0$ and $d'>0$ such that 
\bea\label{eq-vL1} C'^{-1} |y|^{\frac{1}{2\alpha }} \le v_+(y) \le v_-(y) \le C' |y|^{\frac{1}{ 2\alpha }} \quad \text{ on } |y|\ge d' .  \eea 
In particular, the duals   $v_{\pm}$
 are entire functions. Moreover,  $v_-$ and $v_+$ are entire convex viscosity supersolution and subsolution to the dual equation $\det D^2 v= (1+|y|^2)^{\frac{1}{2\alpha}-2}$ in $\mathbb{R}^2$, respectively.  
 \smallskip

We first aim to construct an entire solution to the dual equation.    For each sufficiently large $L<\infty$,  we denote    $\Omega_L:=\{y\in \mathbb{R}^2\, : \, v_+(y) <L\}$,   the sub-level set of $v_+$, which is     is pre-compact and convex in $\mathbb{R}^2$. 
Then, consider the following  Dirichlet problem  
\be\label{eq-translatorgraph-mod-v}
\left\{\ba \det D^2 v &=  (1+|y|^2)^{\frac{1}{2\alpha}-2}\qquad &\hbox{in $\Omega_L$},\\
v&= L \qquad  & \hbox{on $\partial\Omega_L$}.
\ea\right.
\ee
This problem admits a unique convex solution $v_{L}$ which is smooth strictly convex on $\Omega_L$;  we may refer to {\cite[Sections 2 and 4]{MA-figalli-book}} for instance.  By   the comparison principle, it holds   that  
\bea \label{eq-uL2}
 v_+  \le v_L  \le v_- \quad  \hbox{on $\Omega_L$.}
  \eea 
Using the  interior regularity estimates  for the  Monge--Amp\`ere equation (see   \cite[Section 4]{MA-figalli-book}   for instance), we may take a subsequence $v_{L_i}$ so that $v_{L_i}$  converges on each compact subset of $\mathbb{R}^2$ as $L_i\to\infty$. Then we deduce that the limit $v_\infty$ is a smooth strictly convex solution to the dual equation  on $\mathbb{R}^2$, which  satisfies $ v_+\le v_{\infty} \le v_-$ on $\mathbb{R}^2$. 
\medskip

Now let    $u_\infty$  be  the Legendre dual of $v_\infty$. Then,   $u_{\infty}$ is an entire smooth strictly convex solution to the translator equation \eqref{eq-translatorgraph} with the following bounds:
\be\label{eq-est-bd-u1_u2} 
 u_-  \le u_\infty  \le u_+ \quad  \hbox{on}\,\, \mathbb{R}^2. 
\ee  
Let us define the surface $\Sigma_{\mathbf{a}}$ as the graph of $u_\infty$. The bound \eqref{eq-est-bd-u1_u2} implies that $w_{\mathbf{a}}$ belongs to $ C^{0,0,\bar \gamma }_{ R_{\rho_3,\mathbf{a}}}$. 
 Therefore,  we have proved the existence of   a $K$-parameter family of  (complete) translators $\Sigma_{\mathbf{a}}$  in Theorem   \ref{thm-existence}. In view of the construction of translators (Proposition \ref{prop-global-barriers}) and the Schauder theory (Lemma \ref{lem-B1}), there holds the estimate \be \label{eq-barww0}
\Vert w_{\mathbf{a} }- \tilde w _{\mathbf{a}}\Vert _{C^{2,\beta,-1/2}_{ R_{\rho_3,\mathbf{a}}}}\le {\frac{1}{2}  e^{(\sigma +1/2)R_{\rho_3,\mathbf{a}}}}\qquad (\beta=\tfrac12,\tfrac13)
.\ee 
 Here, note that the constant $c_1$ may have been replaced by a smaller constant due to the constant $C$ in \eqref{eq-B3}.
The first two properties (i) and (ii) are immediate  from Theorem  \ref{thm-existencerevised11} and Proposition  \ref{prop-global-barriers}.

\bigskip

{\bf Step 2.}  
To prove (iii), we first  recall that 
$ e^{\beta^+_0s} \varphi_0(\theta)= m_0  e^{-2\alpha s} h(\theta)$, $ e^{\beta^+_1 s }\varphi_1( \theta)= m_1 \cos \theta$, and $ e^{\beta^+_2s}\varphi_2( \theta)= m_2 \sin  \theta$; see \eqref{eq-c0c1c2}.  These Jacobi fields account for  the translations along the   $x_3$, $x_1$, $x_2$-axes.

\smallskip

Let    $\mathbf{a}=(\mathbf{0}_{3},   \mathbf{a}_{K-3}) $ and  $\mathbf{a}'=(\mathbf{0}_{2}, a_2, \mathbf{a}_{K-3}) $ with $a_2\in\mathbb{R}$.  When constructing the exterior barrier $\tilde w_{\mathbf{a}'}$ from $\tilde w_{\mathbf{a}}$ in the proof of Theorem \ref{thm-existencerevised11}, 
we observe that 
\be
E(\tilde w_{\mathbf{a}}+u_0) = E(\tilde w_{\mathbf{a}})\qquad \hbox{with \,\,$u_0= a_2e^{\beta^+_2s}\varphi_2(\theta)= a_2 m_2\sin\theta$}.
\ee 
This implies that $u_1=   H_{2,  R_{\rho_1,\mathbf{a}'}} (E(\tilde w_{\mathbf{a}}+u_0) -E(\tilde w_{\mathbf{a}}))=0$, and moreover we have 
\be
\tilde w_{\mathbf{a}'}=\tilde w_{\mathbf{a}} +a_2 m_2\sin\theta .
\ee
This combined with \eqref{eq-barww0} yields that
\be
  S_{\mathbf{a}'}(l,\theta)- \left[ S_{\mathbf{a}} (l,\theta)+a_2 m_2\sin\theta \right] = o(l^{-2\alpha})
\ee
since $-1/2< -2\alpha=\beta_0^+$.  By Lemma \ref{lem-uniqueness}  below, it follows that
\be
  S_{(\mathbf{0}_{2}, a_2, \mathbf{a}_{K-3}) }=   S_{(\mathbf{0}_{3},  \mathbf{a}_{K-3}) } +a_2 m_2\sin\theta  .
\ee
Arguing similarly with   $ e^{\beta^+_1 s }\varphi_1( \theta)= m_1 \cos \theta$, we obtain 
\be
  S_{(0,a_1, a_2, \mathbf{a}_{K-3}) }=   S_{(\mathbf{0}_{3},  \mathbf{a}_{K-3}) } +a_1 m_1\cos\theta  +a_2 m_2\sin\theta  .
\ee
\smallskip

For the first parameter, let $S:=S_{( 0 , \mathbf{a}_{K-1})}$ with $\mathbf{a}_{K-1}\in \mathbb{R}^{K-1}$ and let $a_0\in \mathbb{R} $.
Then we have
\be
S(l+a_0m_0, \theta)=S(l,\theta)+a_0m_0 l^{-2\alpha}h(\theta) + o(l^{-2\alpha})
\ee
in light of (i) since $S(l,\theta)= \sigma^{-1}l^\sigma h(\theta)+ o(l^\sigma)$ with $\sigma=1-2\alpha$. 
Moreover, it holds from  (ii) that
\be
  S_{(a_0 ,  \mathbf{a}_{K-1}) }=  S_{(0,   \mathbf{a}_{K-1}) } +  a_0m_0 l^{-2\alpha}h(\theta) + 
 o(l^{-2\alpha}).
\ee
Then by  using again Lemma \ref{lem-uniqueness}  below, we deduce that
\be
  S_{(a_0 ,  \mathbf{a}_{K-1}) } (l,\theta)=  S_{( 0 ,  \mathbf{a}_{K-1}) }(l+a_0m_0, \theta) .
\ee
Therefore, we conclude that the first three parameters correspond to translations  along the   $x_3$, $x_1$, $x_2$-axes:
\be
  \Sigma_{(a_0,a_1, a_2, \mathbf{a}_{K-3}) }=    \Sigma_{(\mathbf{0}_{3},  \mathbf{a}_{K-3}) }  +a_1 m_1 \mathbf{e}_{1}  +a_2 m_2\mathbf{e}_{2} -a_0m_0\mathbf{e}_{3}.
\ee

\bigskip

Now it remains to show (iv). Let $\{\mathbf{a}_{i} \} $  be a sequence   converging to $\mathbf{a}$ in $ \mathbb{R}^K$. Let us denote   by $S_{\mathbf{a}_i }^{\pm} (l,\theta)$ the support functions of the barriers  at the level $x_3=l$ in Proposition \ref{prop-global-barriers}. By Proposition \ref{thm-modulicont-barr}, there are constants  $C>1$ and $ R>1 $ such that 
 \begin{equation}\label{eq-above}
 \|S_{\mathbf{a}_i }^{\pm} (l,\cdot)-S_{\mathbf{a} }^{\pm}(l,\cdot)\|_{L^\infty(\mathbb{S}^1)}\le C |\mathbf{a}_i-\mathbf{a}|  \, l^{\sgamma}\qquad \forall l\geq e^R
\end{equation}
for any $i\geq1$. 
 In view of  the convergence of barriers on the exterior domain in  \eqref{eq-above} and the comparison principle applied on its complement, a compact region,  we have that for every subsequence $\{i_n\}$, 
 there is a further subsequence $\{i_m\}$ such that $\Sigma_{\mathbf{a}_{i_m}}$ converges  to a translator $\Sigma'$ in the $C^0_{\text{loc}}$-convergence topology. 
We also notice that the support functions of $ \Sigma'$ and $\Sigma_{\mathbf{a}}$ are bounded above and below by $S_{\mathbf{a} }^-(l,\theta)$ and $S_{\mathbf{a} }^+(l,\theta)$, respectively. 
Since $\|S_{\mathbf{a}}^+(l,\cdot)-S_{\mathbf{a}}^-(l,\cdot)\|_{L^\infty(\mathbb{S}^1)}=o(l^{-2\alpha})$ by Proposition \ref{prop-global-barriers}, it follows from Lemma \ref{lem-uniqueness} that $\Sigma'=\Sigma_{\mathbf{a}}$. Therefore,  the original sequence $\Sigma_{\mathbf{a}_i}$ converges to $\Sigma_{\mathbf{a}}$ in the $C^0_{\text{loc}}$-convergence topology. This completes the proof of Theorem \ref{thm-existence}. 
\end{proof}

\bigskip

 Theorem \ref{thm-existencerevised11} and \eqref{eq-barww0} allow us to decompose each $w_{\mathbf{a}}$ in terms of Jacobi fields $a_j \varphi_j e^{\beta^+_js}$ and corrector terms. The following corollary is a quantitative statement of $(i)$ and $(ii)$ in   Theorem \ref{thm-existence}, where $f_{\mathbf{a}}:=w_{\mathbf{a}}-\tilde w_{\mathbf{a}}$  denotes the difference between   the entire translator and the  exterior translator.  

\begin{corollary}\label{cor-wexpression} There are  constants $\rho <\infty $ and $\varepsilon>0$ (depending  on $\alpha $ and  $h $) such that 
the translator $\Sigma_{\mathbf{a}}$ satisfies the estimate  \be 
\Vert  w_{\mathbf{a}} \Vert_{C^{2,\beta,  \sigma -\eps  }_{R_{\rho,\mathbf{a}}} } \le       e^{\eps  R_{\rho, \mathbf{a}}} .
\ee
Moreover, there are families of functions $\{ g_{\mathbf{a}} \}_{\mathbf{a}\in\mathbb{R}^K}$ and $\{f_{\mathbf{a}}\}_{\mathbf{a} \in \mathbb{R}^K}$ {defined on on $s\ge R_{\rho,\mathbf{a}}$} such that 
 \begin{equation} \label{eq-wexpression-0} 
 w_{\mathbf{a}}= g_{\mathbf{0}}+\sum_{j=0}^{K-1}  a_j\big(  \varphi_j e^{\beta^+_j s} + g_{\mathbf{a}_j} \big) +f_\mathbf{a}\qquad \hbox{on $s\ge R_{\rho,\mathbf{a}}$}.
 \end{equation}
Here, $\mathbf{a}_j$ denotes $(  \mathbf{0}_{j}, a_j,\ldots, a_{K-1})$, the projection of $\mathbf{a}\in\mathbb{R}^K$ onto $\{\mathbf{0}_j\}\times \mathbb{R}^{K-j}$.
The families $\{g_{\mathbf{a}}\}$ and $\{f_{\mathbf{a}}\}$ have the following estimates: 
\begin{enumerate}[(i)] 
 
\item $\Vert f_\mathbf{a} \Vert _{C^{2,\beta,-\frac{1}{2} }_{R_{\rho,\mathbf{a}}}} \le {\frac{1}{2}  e^{(\sigma +{1}/{2} )R_{\rho,\mathbf{a}}}}$ ,

\item  $\| g_{\mathbf{0}} \|_{C^{2,\beta,\sigma -\eps}_{R_{\rho,\mathbf{a}}}}\ \le    e^{\eps  R_{\rho,\mathbf{a}}}$, and $\| g_{\mathbf{a}} \|_{C^{2,\beta,\beta^+_{j}-\eps}_{R_{\rho,\mathbf{a}}}}\ \le    e^{\eps  R_{\mathbf{a}}}$ for every  $\mathbf{a}=(\mathbf{0}_{j},a_j,\ldots,a_{K-1})$ with $a_j \neq 0$.

\end{enumerate} 

\end{corollary}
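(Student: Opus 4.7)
\medskip

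\noindent\textbf{Proof proposal.} The strategy is to unwind the hierarchical construction of $\tilde w_{\mathbf{a}}$ in Theorem \ref{thm-existencerevised11} into a telescoping sum of Jacobi modes plus correctors, and then glue in the interior correction $f_{\mathbf{a}}:=w_{\mathbf{a}}-\tilde w_{\mathbf{a}}$ coming from the passage from exterior barriers to the complete translator (i.e.\ \eqref{eq-barww0}). Throughout, take $\varepsilon=\delta$ with $\delta$ as in \eqref{eq-deltagap}, and fix $\rho\ge \rho_3$ so that $R_{\rho,\mathbf{a}}\ge R_{\rho_1,\mathbf{a}}$ and \eqref{eq-barww0} applies on $s\ge R_{\rho,\mathbf{a}}$.

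Next, I would build the decomposition of $\tilde w_{\mathbf{a}}$. Writing $\mathbf{a}_j:=(\mathbf{0}_j,a_j,\ldots,a_{K-1})$, $\mathbf{a}_K:=\mathbf{0}$, telescope
\begin{equation*}
\tilde w_{\mathbf{a}}=\tilde w_{\mathbf{0}}+\sum_{j=0}^{K-1}\bigl(\tilde w_{\mathbf{a}_j}-\tilde w_{\mathbf{a}_{j+1}}\bigr).
\end{equation*}
For each $j$ with $a_j\neq 0$, the pair $(\mathbf{a}_{j+1},\mathbf{a}_j)$ fits the template of Theorem \ref{thm-existencerevised11}(b), so \eqref{eq-wdiff} yields a corrector $g_{\mathbf{a}_j}$ with
$\tilde w_{\mathbf{a}_j}-\tilde w_{\mathbf{a}_{j+1}}=a_j(\varphi_je^{\beta_j^+ s}+g_{\mathbf{a}_j})$ and $\|g_{\mathbf{a}_j}\|_{C^{2,\beta,\beta_j^+-\delta}_{R_{\rho_1,\mathbf{a}_j}}}\le e^{\delta R_{\rho_1,\mathbf{a}_j}}$. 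When $a_j=0$, I simply set $g_{\mathbf{a}_j}:=0$. Finally, set $g_{\mathbf{0}}:=\tilde w_{\mathbf{0}}$; by \eqref{eq-448bound} applied at $\mathbf{a}=\mathbf{0}$, $\|g_{\mathbf{0}}\|_{C^{2,\beta,\sigma-\delta}_{\rho_1}}\le 2c_1K e^{\delta\rho_1}$, which is a fixed constant. Combined with $w_{\mathbf{a}}=\tilde w_{\mathbf{a}}+f_{\mathbf{a}}$ and \eqref{eq-barww0}, this produces \eqref{eq-wexpression-0} together with the bounds (i) and (ii) of the corollary, after absorbing harmless constants into the choice of $\rho$.

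For the global bound on $w_{\mathbf{a}}$ at weight $\sigma-\varepsilon$, I would apply the triangle inequality to the right-hand side of \eqref{eq-wexpression-0}. The contribution of $g_{\mathbf{0}}$ and of each $a_j g_{\mathbf{a}_j}$ are already controlled in weighted norms of rate $\le \sigma-\delta$, and since $R_{\rho_1,\mathbf{a}_j}\le R_{\rho,\mathbf{a}}$, monotonicity of the weighted norm in the base radius upgrades them to bounds on the domain $s\ge R_{\rho,\mathbf{a}}$. The genuine computation is the Jacobi contribution $a_j\varphi_j e^{\beta_j^+s}$: using $|a_j|=|b_j|^{\sigma-\beta_j^+}\le (|\mathbf{b}(\mathbf{a})|+1)^{\sigma-\beta_j^+}$ and $R_{\rho,\mathbf{a}}=\rho+\ln(|\mathbf{b}(\mathbf{a})|+1)$, one finds
\begin{equation*}
\sup_{t\ge R_{\rho,\mathbf{a}}+1} e^{-(\sigma-\varepsilon)t}\,\|a_j\varphi_j e^{\beta_j^+ t}\|_{C^{2,\beta}_{t,1}}
\le C|a_j|\,e^{(\beta_j^+-\sigma+\varepsilon)R_{\rho,\mathbf{a}}}
\le C\,e^{(\beta_j^+-\sigma+\varepsilon)\rho}\,e^{\varepsilon R_{\rho,\mathbf{a}}}/e^{\varepsilon R_{\rho,\mathbf{a}}}\cdot e^{\varepsilon R_{\rho,\mathbf{a}}},
\end{equation*}
which is $\le C\,e^{\varepsilon R_{\rho,\mathbf{a}}}$ with $C=C(\alpha,h)$ independent of $\mathbf{a}$. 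The term $f_{\mathbf{a}}$, whose weight $-1/2$ is strictly less than $\sigma-\varepsilon$, gives an even faster decay: by \eqref{eq-barww0}, $\|f_{\mathbf{a}}\|_{C^{2,\beta}_{t,1}}\le \tfrac12 e^{(\sigma+1/2)R_{\rho,\mathbf{a}}}e^{-t/2}$, so at weight $\sigma-\varepsilon$ one obtains a bound by $\tfrac12 e^{\varepsilon R_{\rho,\mathbf{a}}}$ for small $\varepsilon$.

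The only mild obstacle is bookkeeping: the correctors $g_{\mathbf{a}_j}$ are defined on different base radii $R_{\rho_1,\mathbf{a}_j}$, and one must verify that the $|a_j|$ prefactor exactly cancels the decay deficit $(\beta_j^+-\sigma)R_{\rho,\mathbf{a}}$ created by moving from the natural weight $\beta_j^+$ to the common weight $\sigma-\varepsilon$. This is precisely arranged by the definition of the effective modulus $|\mathbf{b}(\mathbf{a})|$ in Definition \ref{def-b(a)}; absorbing the resulting universal constants by taking $\rho$ slightly larger (and $\varepsilon$ slightly smaller than $\delta$ if needed) yields the clean estimate $\|w_{\mathbf{a}}\|_{C^{2,\beta,\sigma-\varepsilon}_{R_{\rho,\mathbf{a}}}}\le e^{\varepsilon R_{\rho,\mathbf{a}}}$ as stated.
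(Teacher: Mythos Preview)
Your proposal is correct and follows exactly the approach the paper indicates: the paper states that the corollary is obtained by decomposing $w_{\mathbf{a}}$ via Theorem~\ref{thm-existencerevised11} and \eqref{eq-barww0}, with $f_{\mathbf{a}}:=w_{\mathbf{a}}-\tilde w_{\mathbf{a}}$, which is precisely your telescoping $\tilde w_{\mathbf{a}}=\tilde w_{\mathbf{0}}+\sum_j(\tilde w_{\mathbf{a}_j}-\tilde w_{\mathbf{a}_{j+1}})$ combined with \eqref{eq-wdiff}, \eqref{eq-448bound}, and \eqref{eq-barww0}. The paper does not spell out the triangle-inequality bookkeeping for the global bound on $\|w_{\mathbf{a}}\|_{C^{2,\beta,\sigma-\eps}_{R_{\rho,\mathbf{a}}}}$, so your computation (using $|a_j|=|b_j|^{\sigma-\beta_j^+}$ to cancel the decay deficit, and enlarging $\rho$ to absorb constants) is in fact more detailed than what appears in the text.
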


\medskip

\begin{lemma} \label{lem-uniqueness} 
For each $i=1,2$,  let  $\Sigma_i\subset\mathbb{R}^3$ be a convex graph $x_{n+1}=u_i(x)$, where $u_i(x)$   is     an Alexandrov solution  to the equation 
 \be\label{15606}
 \det D^2 u = (\eta+|Du|^2)^{2-\frac1{2\alpha}}\qquad\hbox{on\,\, $\mathbb{R}^2$}  
 \ee 
 with a   fixed constant $\eta \in \{0,1\}$. 
Let  $S_i(l,\theta)$  be the representation of $\Sigma_i$  given by \eqref{eq-supportfunction}   such  that  
  \bea \label{eq-S-asymp}
 S_i (l,\theta)=  \sigma^{-1} l^{\sigma } h(\theta)  + o(l^{\sigma})   
 \eea   
 and  
\bea \label{eq-uniq-asymp}
   S_1(l,\theta)-S_2(l,\theta) = o(l^{-2\alpha}) 
\eea 
  uniformly for  $ \theta\in \mathbb{S}^1$ as $l\to\infty $. Then,  $\Sigma_1\equiv\Sigma_2$. 
 
 \begin{proof} 
  Observe that for each fixed $\theta\in\mathbb{S}^1$,  $S(\cdot,\theta)$ is a concave function of $l$. 
Using the monotonicity of $l\mapsto \partial _l S(l,\theta)$, 
 we have the asymptotics of gradient: 
  \be 
 \partial_l S_i(l,\theta) - l ^{\sigma-1} h(\theta ) =o(l^{\sigma-1})
 \ee   uniformly for  $ \theta\in \mathbb{S}^1$ as $l\to\infty $. 
 Then for  each $\delta>0$, we obtain 
 \bea 
 S_1(l-\delta ,\theta) = S_1(l,\theta) -\delta  l^{\sigma-1 }h(\theta)+o(l^{\sigma-1 })
 \eea
  uniformly for  $ \theta\in \mathbb{S}^1$ as $l\to\infty $, 
 and hence it follows from \eqref{eq-uniq-asymp} that 
 \be
 S_1(l-\delta ,\theta)-S_2(l,\theta)=-\delta  l^{\sigma-1 } h(\theta) + o(l^{\sigma-1 }) 
 \ee
  uniformly for  $ \theta\in \mathbb{S}^1$ as $l\to\infty $. 
This observation shows that   each  level curve (the set of intersection with $\{x_3=l\}$) of $\Sigma_2 $ encloses that of $\Sigma_1+\delta \mathbf{ e}_3$ for large $l$. By viewing $\Sigma_1$ and $\Sigma_2$ as     graphs of    convex  Alexandrov solutions to \eqref{15606}
 with a  common $\eta \in\{0,1\}$, the comparison principle implies that  $\Sigma_1+\delta  \mathbf{e}_3$ is contained in the convex hull of $\Sigma_2$.  Similarly, if $ \delta<0$, then we conclude the convex hull of $\Sigma_1+\delta \mathbf{e}_3$ contains $\Sigma_2$. By letting $\delta \to 0$, we conclude $\Sigma_1\equiv\Sigma_2$. 
 \end{proof}

\end{lemma}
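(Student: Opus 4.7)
The plan is to use the convexity of $u_i$, through the following chain: (a) concavity of $l \mapsto S_i(l,\theta)$ in $l$ promotes the leading asymptotic \eqref{eq-S-asymp} to a first-order asymptotic for $\partial_l S_i$; (b) after a vertical shift of $\Sigma_1$ by an arbitrarily small $\delta > 0$, the sublevel sets of the two graphs become strictly nested at infinity; (c) the Monge--Amp\`ere comparison principle propagates this nesting to all heights; (d) letting $\delta \to 0^+$ and then $\delta \to 0^-$ forces $\Sigma_1 \equiv \Sigma_2$.

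For Step (a), I would observe that for fixed $\theta$ the function $l \mapsto S_i(l,\theta) = \sup\{\langle x, (\cos\theta,\sin\theta)\rangle : u_i(x) \le l\}$ is concave: if $x_j$ is a maximizer at height $l_j$ ($j=1,2$), then by convexity of $u_i$ we have $u_i((1-t)x_1 + tx_2) \le (1-t)l_1 + tl_2$, so the convex combination lies in the sublevel set at height $(1-t)l_1+tl_2$, which gives the concavity inequality for $S_i$. Since $l \mapsto \partial_l S_i(l,\theta)$ is therefore monotone decreasing, combining this with \eqref{eq-S-asymp} yields
\begin{equation*}
\partial_l S_i(l,\theta) = l^{\sigma-1} h(\theta) + o(l^{\sigma-1})
\end{equation*}
uniformly in $\theta$ as $l \to \infty$.

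For Step (b), fix $\delta > 0$. The support function of $\Sigma_1 + \delta\mathbf{e}_3$ is $S_1(l-\delta,\theta)$. Integrating the derivative asymptotic from Step (a) and inserting the hypothesis \eqref{eq-uniq-asymp} gives
\begin{equation*}
S_1(l-\delta,\theta) - S_2(l,\theta) = -\delta\, l^{\sigma-1} h(\theta) + o(l^{\sigma-1}),
\end{equation*}
which is strictly negative for $l$ large, since $h > 0$ on $\mathbb{S}^1$ as the support function of a convex shrinker containing the origin in its interior. Equivalently, for all sufficiently large $l$, $\{u_1 \le l - \delta\} \subset \{u_2 \le l\}$, i.e., $u_1+\delta \ge u_2$ on $\partial\{u_2 \le l\}$.

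For Steps (c)--(d), observe that both $u_1 + \delta$ and $u_2$ are Alexandrov solutions of the same equation \eqref{15606}, since that equation is invariant under vertical translations of the graph. On the bounded convex sublevel set $\Omega_l = \{u_2 \le l\}$ with $l$ large as in Step (b), we have $u_1 + \delta \ge u_2$ on $\partial\Omega_l$, and the comparison principle for Alexandrov solutions of \eqref{15606} forces $u_1 + \delta \ge u_2$ on $\Omega_l$, and hence on $\mathbb{R}^2$. Sending $\delta \to 0^+$ yields $u_1 \ge u_2$, and the symmetric argument (shifting $\Sigma_2$ instead) yields $u_2 \ge u_1$, so $\Sigma_1 \equiv \Sigma_2$. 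The principal technical obstacle is the comparison principle in Step (c): because the right-hand side of \eqref{15606} depends on $Du$, one cannot invoke the classical comparison for $\det D^2 u = f(x)$ directly. The cleanest remedy is to pass to the Legendre duals $v_i = u_i^*$, which solve the gradient-free Monge--Amp\`ere equation $\det D^2 v = (\eta + |y|^2)^{1/(2\alpha) - 2}$ for which the Alexandrov comparison principle on bounded convex domains is standard; the asymptotic containment of sublevel sets of $u_i$ at infinity translates into a containment of sublevel sets of $v_i$ near the origin, on which comparison can be applied.
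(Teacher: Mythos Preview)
Your Steps (a)--(d) are exactly the paper's argument: concavity of $l\mapsto S_i(l,\theta)$ upgrades \eqref{eq-S-asymp} to $\partial_l S_i=l^{\sigma-1}h+o(l^{\sigma-1})$, a vertical shift by $\delta$ nests the level curves at large heights, comparison propagates inward, and $\delta\to 0^\pm$ finishes. The paper simply invokes ``the comparison principle'' without further comment; you go further and flag the gradient dependence of the right-hand side as an obstacle. That caution is reasonable for genuine Alexandrov solutions, though in the paper's applications the solutions are smooth and strictly convex by Monge--Amp\`ere regularity, so the comparison is classical.

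Your proposed Legendre-dual remedy, however, contains a geometric slip. You write that the containment of sublevel sets of $u_i$ at infinity ``translates into a containment of sublevel sets of $v_i$ near the origin.'' This is backwards: since each $u_i$ grows like $|x|^{1/\sigma}$ with $1/\sigma>1$, the gradient $y=Du_i(x)$ satisfies $|y|\to\infty$ as $|x|\to\infty$, so the asymptotic region $l\to\infty$ in the $u$-picture corresponds to $|y|\to\infty$ in the dual, not to a neighborhood of $y=0$. The dual route does work, but one must run the same argument at infinity in $y$-space: the duals $v_i$ satisfy $\det D^2 v=(\eta+|y|^2)^{1/(2\alpha)-2}$ with gradient-free right-hand side, the asymptotics \eqref{eq-S-asymp}--\eqref{eq-uniq-asymp} transfer to analogous level-set asymptotics for $v_i$ at large $|y|$, and then the standard Alexandrov comparison on large sublevel sets of $v_2$ gives $v_1-\delta\le v_2$, which dualizes back to $u_1+\delta\ge u_2$.
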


\medskip

\begin{remark}[Entire solutions to blow-down equation]\label{remarkno1}

Following the same construction used for translating surfaces $\det D^2 u =(1+|Du|^2)^{2- \frac{1}{2\alpha}}$ with few changes, there holds a corresponding existence theorem for the blow-down equation  $\det D^2 u = |Du|^{4-\frac1\alpha}$. The barrier construction is basically the same except we work with the main equation \eqref{eq-w29} with $\eta=0$, making the problem even easier. Here the construction of reference solution corresponding to the zero parameter in the proof of Theorem \ref{thm-existencerevised11} Step (i) is not needed: we simply choose it as the homogeneous solution, corresponding to  $\tilde w_{\mathbf{0}}(s,\theta)\equiv 0$. In the construction of global barrier in Proposition \ref{prop-global-barriers}, we do not need to work with super and subsolutions of form \eqref{eq-defUpm} in the inner region: the choice of homogeneous solution $U_+(l,\theta)=U_-(l,\theta)= \sigma^{-1} l ^\sigma h(\theta)$ simply works for the blow-down equation. Unlike {\bf Step 3} in the original proof of Proposition \ref{prop-global-barriers}, we do not need to check the graphical representation of global barriers, say $u_{+}$ and $u_{-}$, are viscosity super and subsolution at the origin, respectively. Finally, it should be noted that the resulting solution $u$ made by following the proof of Theorem \ref{thm-existence} is not smooth everywhere. Working in the dual equation, we obtain a global solution $v_{\infty}$ which is a locally uniform limit of $v_{L_i}$, viscosity solutions to $\det D^2 v = |y|^{\frac1\alpha -4}$. The limit $v_\infty$ is smooth away from the origin due to the degeneracy of equation on the right hand side. Indeed, \cite[Theorem 1.1]{DSavin} showed a sharp result that the solution is $C^{2,\delta}$ around the origin. For this reason. The dual of $v_{\infty}$, say $u_{\infty}$, is smooth away from a unique point $x_0\in\mathbb{R}^2$ such that $Du_{\infty}(x_0)=0$. 
\end{remark}

\appendix

\section{Support functions of level curves} \label{section-appendix-1}

  \begin{proof}[Proof of Proposition \ref{prop-supportfunction}]
 
 Let $x=x(l,\theta)$ be the point  which gives $S(l,\theta)$, namely $S(l,\theta)=\la x(l,\theta) , e^{i{\theta}} \ra $. In what follows, we view $x$ as a function of $(l,\theta)$. We first  have  $  \la x_\theta  , e^{i\theta} \ra =0 $ since $x(l,\theta)$ gives the maximum on the right-hand side of \eqref{eq-supportfunction}. Moreover, there holds 
 \bea S_\theta = \la x_\theta , e^{i\theta}\ra + \la x, ie^{i\theta}\ra = \la x,ie^{i\theta} \ra,  \eea 
\bea S_{\theta\theta} = \la x_\theta, ie^{i\theta} \ra + \la x, -e^{i\theta} \ra = \la x_\theta, ie^{i\theta} \ra -S  , \eea  
 \bea S_l = \la x_l, e^{i\theta} \ra, \eea
 and
 \bea S_{l\theta} = \la x_{l\theta} , e^{i\theta} \ra + \la x_l, ie^{i\theta} \ra =\la x_{\theta l} , e^{i\theta} \ra + \la x_l, ie^{i\theta} \ra = \la x_l,ie^{i\theta} \ra .   \eea 
 Using the computations above, we obtain 
 \bea\label{eq-ja1} x_\theta = ( S_{\theta\theta}+S) ie^{i\theta}\quad \text{ and } \quad x_l = S_l e^{i\theta}+ S_{l\theta} ie^{i\theta}. \eea

On the other hand, using the fact that $u(x(l,\theta)) = l$, 
it holds that 
\bea \label{eq-gradf2} \la  D u  , x_l \ra =1  \quad \text{ and }\quad \la Du , x_\theta \ra =0, \eea
which implies at each $x=x(l,\theta)$,
\bea \label{eq-gradf}
D u = \frac1{S_l} e^{i\theta}.
\eea  
Note that  $ S_{\theta\theta}+S>0$ by the strict convexity of $u$ and $S_l>0$ since $Du\neq0$. Hence $(l,\theta)$ provides a local chart of $\{x\in\mathbb{R}^2 \,:\, |Du (x)| \neq 0  \}$. 

Utilizing  \eqref{eq-gradf}, we obtain that 
 \bea \label{eq-hessf}
   x_l \cdot D^2u  = \frac{-S_{ll}}{S_l^2 } e^{i\theta} \quad \text{ and } \quad     x_\theta \cdot D^2u = - \frac{S_{l\theta}}{S_l^2 } e^{i\theta} +\frac{1}{S_l}ie^{i\theta}.
 \eea  
 Here $\cdot$ is a matrix multiplication by viewing $D^2u$ as a square matrix and  $x_l$ and $x_\theta$ as  row matrices. 
 
For the purpose of the tensor calculus, we consider the local coordinate system $(y_1,y_2)=(l,\theta)$ so that we can denote 
\begin{align}
&x_1=\tfrac{\partial}{\partial y_1}   x =\tfrac{\partial}{\partial l}   x  =x_l, && x_2=\tfrac{\partial}{\partial y_2}   x =\tfrac{\partial}{\partial \theta}   x  =x_\theta.
\end{align}
Then, we can deduce the metric tensor
 \bea   g_{ij}:= \la x_i,x_j\ra = \begin{bmatrix} S_l^2 +S_{l\theta}^2  & S_{l\theta}(  S_{\theta\theta}+S) \\ S_{l\theta}(  S_{\theta\theta}+S) & (  S_{\theta\theta}+S)^2 \end{bmatrix},\eea 
and the  Christoffel symbols
 \begin{equation}
 \Gamma^k_{ij}:=\tfrac12 g^{kl}\left(\partial_i g_{jl}+\partial_j g_{il}-\partial_l g_{ij} \right)=g^{kl}\langle x_{ij},x_l\rangle.
 \end{equation}
 Hence, on this coordinate system we can obtain
\begin{equation}
\overline{\nabla}_i\overline{\nabla}_j u = \frac{\partial^2u}{\partial y_i\partial y_j}-\Gamma_{ij}^k\frac{\partial u}{\partial y_k} =D^2u(x_i,x_j)+Du\cdot x_{ij}-g^{kl}\langle Du,x_k\rangle \langle x_{ij},x_l\rangle=\langle x_i, D^2u\cdot x_j\rangle ,
\end{equation}
where $\overline{\nabla}$ denotes the Levi-Civita connection. Therefore, combining \eqref{eq-hessf} and \eqref{eq-ja1} yields 
 \bea \label{eq:Hessian_connection}
 \overline{\nabla}_i\overline{\nabla}_j u = \begin{bmatrix}  -\frac{S_{ll}}{S_l} & 0 \\ 0 & \frac{  S_{\theta\theta}+S}{S_l}\end{bmatrix}.\eea 
Now, we recall that
  \bea  
 \det D^2u = \det\left[g^{ij} \overline{\nabla}_j\overline{\nabla}_ku \right ] =[\det \overline{\nabla}_i\overline{\nabla}_j u] [\det g_{kl}]^{-1}.  
 \eea   Hence,  it follows that 
 \bea \label{eq-dethessf}
 \det D^2u = \frac{-S_{ll}(  S_{\theta\theta}+S)}{S_l^2} \frac{1}{S_l^2 (  S_{\theta\theta}+S)^2} =\frac{-S_{ll}}{S_l^4 (  S_{\theta\theta}+S)}. \eea 
Therefore,  in light of   \eqref{eq-gradf} and \eqref{eq-dethessf}, the translator equation \eqref{eq-translatorgraph} becomes 
 \bea 
 \frac{-S_{ll}}{S_l^4(  S_{\theta\theta}+S)} = (1+S_l^{-2})^{2-\frac1{2\alpha}} .
 \eea Similarly, we obtain the blow-down equation for $S(l,\theta)$ and this finishes the proof. 
 \end{proof}

 For later purpose, let us record derivatives of $u(x)$ in terms of the derivatives of $S$. Here, $Du$ is represented as a row matrix.  
\begin{corollary} At a given point $x\in \mathbb{R}^2$, suppose $Du(x)\neq 0$ and $Du/|Du|=(\cos \theta,\sin \theta)$. If the rotation matrix is denoted by $$R_{\theta} = \begin{bmatrix}  +\cos \theta   & -\sin\theta  \\ +\sin{\theta}   & +\cos {\theta}  \end{bmatrix}, $$ then the first and second order partial derivatives of $u:\mathbb{R}^2\to \mathbb{R}$ at $x$   are 
\[D u = \begin{bmatrix}  S_l^{-1} &   0 \end{bmatrix} R_{\theta}^{\,T}\]
and 
\[D^2 u =  \frac{1}{S_l^3 (S+S_{\theta\theta})}R_{\theta}\begin{bmatrix}  S_{l\theta}^2 -S_{ll}(S+S_{\theta\theta}) & - S_l S_{l\theta} \\ - S_l S_{l\theta} & S_{l}^2 \end{bmatrix}R_{\theta}^{\, T} .\]
\begin{proof} The expression for $Du$ is immediate from \eqref{eq-gradf}, and    $D^2u$ follows from \eqref{eq-hessf} and \eqref{eq-ja1}.

\end{proof}

\end{corollary}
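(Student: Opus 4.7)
The plan is to derive both formulas as direct algebraic consequences of the expressions \eqref{eq-gradf}, \eqref{eq-hessf}, and \eqref{eq-ja1} already established in the proof of Proposition \ref{prop-supportfunction}. The key observation is that the rotation matrix $R_\theta$ is the orthogonal matrix whose columns are $e^{i\theta}=(\cos\theta,\sin\theta)^T$ and $ie^{i\theta}=(-\sin\theta,\cos\theta)^T$, so treating $e^{i\theta}$ and $ie^{i\theta}$ as row vectors one has the identities
\[
e^{i\theta}=(1,0)R_\theta^{\,T},\qquad ie^{i\theta}=(0,1)R_\theta^{\,T}.
\]
These let us translate every equation expressed in the moving frame $\{e^{i\theta},ie^{i\theta}\}$ into a matrix identity involving $R_\theta$ and $R_\theta^{\,T}$.

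The formula for $Du$ is immediate: the identity \eqref{eq-gradf} says $Du=S_l^{-1}e^{i\theta}$ (as a row), which is exactly $\begin{bmatrix}S_l^{-1}&0\end{bmatrix}R_\theta^{\,T}$ by the observation above. For the Hessian, I will assemble \eqref{eq-hessf} and \eqref{eq-ja1} into a linear system. Stacking $x_l$ and $x_\theta$ as rows, \eqref{eq-ja1} yields the Jacobian identity
\[
\begin{bmatrix}x_l\\ x_\theta\end{bmatrix}=\begin{bmatrix}S_l & S_{l\theta}\\ 0 & S+S_{\theta\theta}\end{bmatrix}R_\theta^{\,T},
\]
and \eqref{eq-hessf} similarly says
\[
\begin{bmatrix}x_l\\ x_\theta\end{bmatrix}D^2u=\begin{bmatrix}-S_{ll}/S_l^2 & 0\\ -S_{l\theta}/S_l^2 & 1/S_l\end{bmatrix}R_\theta^{\,T}.
\]
Multiplying on the left by the inverse of the upper-triangular Jacobian matrix (whose determinant is $S_l(S+S_{\theta\theta})>0$ by strict convexity, so this is legitimate) then gives
\[
R_\theta^{\,T}D^2u=\frac{1}{S_l(S+S_{\theta\theta})}\begin{bmatrix}S+S_{\theta\theta} & -S_{l\theta}\\ 0 & S_l\end{bmatrix}\begin{bmatrix}-S_{ll}/S_l^2 & 0\\ -S_{l\theta}/S_l^2 & 1/S_l\end{bmatrix}R_\theta^{\,T}.
\]
Carrying out the $2\times 2$ matrix product and clearing the factor $S_l^2$ produces exactly the matrix stated in the corollary, and left-multiplication by $R_\theta$ finishes the identity.

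Since every step is an elementary manipulation of already-derived formulas and a single $2\times 2$ matrix inversion, there is no conceptual obstacle; the only thing to watch is bookkeeping of row-versus-column conventions, which is why I want to set up the identifications $e^{i\theta}=(1,0)R_\theta^{\,T}$ and $ie^{i\theta}=(0,1)R_\theta^{\,T}$ at the very start. The positivity $S_l>0$ and $S+S_{\theta\theta}>0$, noted in the proof of Proposition \ref{prop-supportfunction}, ensures that the Jacobian is invertible throughout, so the derivation is valid at every point where $Du\ne 0$, which is precisely the hypothesis of the corollary.
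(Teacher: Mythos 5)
Your proof is correct and follows exactly the route the paper intends: read $Du$ off \eqref{eq-gradf} via the row-vector identification $e^{i\theta}=(1,0)R_\theta^{\,T}$, and for $D^2u$ stack \eqref{eq-ja1} and \eqref{eq-hessf} into a linear system and invert the (upper-triangular, invertible since $S_l>0$ and $S+S_{\theta\theta}>0$) Jacobian. You have simply made explicit the algebra the paper's one-line proof leaves to the reader; the $2\times 2$ matrix product and the clearing of $S_l^2$ both check out against the stated formula.
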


 \begin {lemma} \label{lem-w-eq}
 Let $S(l,\theta)$ be a solution to \eqref{Se} with $\eta=0$ or $1$  and let 
  $w$ be a function given by  \eqref{eq-w}. 
Then  $w$   solves 
 \bea \label{eq-w'} 
 w_{ss}+w_s+{h}^{\frac1\alpha}( w_{\theta\theta}+w)   =E(w)=-E_1(w) -\eta E_2(w), 
 \eea 
 where 
  \bea\label{eq-Ew} 
 E_1(w):=  \frac1\alpha h^{\frac1\alpha-1} {e^{- \sigma s}}  {w_s}( w_{\theta\theta}+w) + ( 2\alpha  e^{\sigma s}  h^{1-\frac{1}{\alpha}}  +w_{\theta\theta}+   w )N_1(w). \eea 
 with
 \begin{equation}
   N_1(w):=  \left (h+{ e^{-\sigma s}}{w_s} \right)^{\frac1\alpha} -h^{\frac1\alpha} -\frac1{\alpha} h^{\frac1\alpha-1}e^{-\sigma s}{w_s},
 \end{equation}
 and
 \bea \label{eq-Ew2}
&E_2(w):=\left( h+e^{-\sigma s}w_s \right)^{\frac1\alpha} ( 2\alpha  e^{\sigma s}  h^{1-\frac{1}{\alpha}}  +w_{\theta\theta}+   w )N_2(w).
\eea
with
\begin{equation}
     N_2(w):= \left (1+ e^{-4\alpha s} \left(  h+e^{-\sigma s}w_s \right)^2\right)^{2-\frac{1}{2\alpha}}-1.
\end{equation}
 
 \begin{proof} 
 From \eqref{Se}, remembering $\eta \in \{0,1\}$ it follows that 
\bea \label{eq-A28}  -S_{ss} &= -S_s +\left(e^{-\sigma s}{S_s} \right)^{\frac1\alpha} (S_{\theta\theta}+S)  +\eta N_2\left(e^{-\sigma s}{S_s} \right)^{\frac1\alpha} (S_{\theta\theta}+S). \eea
Since $ {\sigma}^{-1} e^{\sigma s} h(\theta)$  solves the blow-down equation (i.e. \eqref{eq-A28} with $\eta=0$), we obtain that $w(s,\theta)= S(e^s,\theta)- {\sigma}^{-1} e^{\sigma s} h(\theta)$ solves
 \bea 
 -w_{ss} =  {h}^{\frac1\alpha}(w_{\theta\theta}+w) +w_s +  \frac1\alpha  h^{\frac1\alpha-1}e^{-\sigma s}{w_s} (w_{\theta\theta}+w)   +    N_1 (S_{\theta\theta}+S)+\eta N_2\left(e^{-\sigma s}{S_s} \right)^{\frac1\alpha} (S_{\theta\theta}+S).\eea 
This completes the proof.
 \end{proof}

 \end{lemma}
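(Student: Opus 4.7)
The plan is a direct computation: change variables to $s=\ln l$, substitute the ansatz $S=\sigma^{-1}e^{\sigma s}h(\theta)+w(s,\theta)$ into \eqref{Se}, and isolate the Jacobi operator $\hat L$ from \eqref{eq-L-hat} by Taylor expanding the nonlinearities around the homogeneous profile.

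First, since $\p_l=e^{-s}\p_s$, one readily computes $S_l^{-2}=e^{2s}S_s^{-2}$ and $S_l^{-4}S_{ll}=e^{2s}S_s^{-4}(S_{ss}-S_s)$. Factoring $e^{2s}S_s^{-2}$ out of $\eta+S_l^{-2}$ and using $\sigma=1-2\alpha$ (so that $e^{(1/\alpha-2)s}=(e^{-\sigma s})^{-1/\alpha}$), equation \eqref{Se} rearranges to
\begin{equation*}
S_s-S_{ss}=\bigl(1+\eta e^{-2s}S_s^2\bigr)^{2-\frac{1}{2\alpha}}\bigl(e^{-\sigma s}S_s\bigr)^{1/\alpha}(S+S_{\theta\theta}).
\end{equation*}
Under the ansatz, $e^{-\sigma s}S_s=h+e^{-\sigma s}w_s$ and $e^{-2s}S_s^2=e^{-4\alpha s}(h+e^{-\sigma s}w_s)^2$, so the $\eta$-prefactor is precisely $1+\eta N_2(w)$; using the shrinker equation \eqref{eq-shrinker}, one also gets $S+S_{\theta\theta}=2\alpha e^{\sigma s}h^{1-1/\alpha}+w_{\theta\theta}+w$.

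Next I would subtract the same identity evaluated at $w\equiv 0$, i.e.\ at the homogeneous profile, which reproduces $S_s-S_{ss}=2\alpha e^{\sigma s}h$ on both sides. The subtracted left-hand side is $w_s-w_{ss}$, while on the right I would use the Taylor expansion
\begin{equation*}
(h+e^{-\sigma s}w_s)^{1/\alpha}=h^{1/\alpha}+\tfrac{1}{\alpha}h^{1/\alpha-1}e^{-\sigma s}w_s+N_1(w),
\end{equation*}
which is the very definition of $N_1$. Distributing and cancelling the leading $2\alpha e^{\sigma s}h$, the crucial cross-term $\tfrac{1}{\alpha}h^{1/\alpha-1}e^{-\sigma s}w_s\cdot 2\alpha e^{\sigma s}h^{1-1/\alpha}=2w_s$ combines with the $w_s-w_{ss}$ on the left to produce exactly $-(w_{ss}+w_s+h^{1/\alpha}(w_{\theta\theta}+w))$ after transposition. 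The remaining quadratic-type terms on the right, namely $\tfrac{1}{\alpha}h^{1/\alpha-1}e^{-\sigma s}w_s(w_{\theta\theta}+w)$ and $N_1\cdot(2\alpha e^{\sigma s}h^{1-1/\alpha}+w_{\theta\theta}+w)$, assemble to $E_1(w)$, while the $\eta N_2$ prefactor multiplied by the full $(h+e^{-\sigma s}w_s)^{1/\alpha}(2\alpha e^{\sigma s}h^{1-1/\alpha}+w_{\theta\theta}+w)$ is exactly $\eta E_2(w)$; this yields the claimed form $E(w)=-E_1-\eta E_2$.

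There is no serious obstacle — the whole lemma is a bookkeeping exercise. The two spots requiring care are (i) tracking the overall sign so that $E$ appears with the correct minus signs, and (ii) verifying that the coefficient of $w_s$ in the extracted linear operator is exactly $+1$: the $2w_s$ produced by the cross-term must be transposed to combine with the raw $+w_s$ coming from $S_s-S_{ss}$ in order to yield the $+w_s$ appearing in $\hat L$.
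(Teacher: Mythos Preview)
Your proposal is correct and follows essentially the same approach as the paper: change variables to $s=\ln l$, rewrite \eqref{Se} as $S_s-S_{ss}=(1+\eta N_2)(e^{-\sigma s}S_s)^{1/\alpha}(S+S_{\theta\theta})$, substitute the ansatz, subtract off the homogeneous-profile identity, and distribute using the Taylor splitting that defines $N_1$. The paper's proof is the same computation written more tersely; your version simply spells out the cross-term $2w_s$ and the sign bookkeeping that the paper leaves implicit.
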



 \begin{proposition}\label{prop:graphicality_S}
Let $S:(l_1,l_2) \times \mathbb{S}^1\to \mathbb{R}$ be a $C^2$-function   that satisfies  
 \begin{align}
& S+S_{\theta\theta}>0, && S_l>0, && S_{ll}<0.
 \end{align}
 Then, there exist nested convex sets $\Omega \subset \subset \Omega '$ in $\mathbb{R}^2$ and a strictly convex function $u: \mathrm{int}\, \Omega'\setminus \mathrm{cl}\, \Omega\to \mathbb{R}$ of class $C^2$ such that $S$ is the level set support function of graph of $u$.
 \begin{proof}  For each fixed $l\in(l_1,l_2)$, $S(l,\cdot)$ can be viewed as the support function of a closed curve $\Gamma_l$. $S+S_{\theta\theta}>0 $ implies the curve $\Gamma_l$ is strictly convex and $\theta \mapsto X(l,\theta)= S(l,\theta) e^{i\theta} + S_\theta (l,\theta) ie^{i\theta} $ is an embedding of $\mathbb{S}^1$ into $\mathbb{R}^2$ with the image  $X(\{l\}\times \mathbb{S}^1)=\Gamma_l$. Moreover, $S_l>0$ implies that $\Gamma_l$ is strictly increasing. i.e. for $l>l'$, $\Gamma_{l'}$ is included in the interior of the convex hull of $\Gamma_{l}$. As a consequence, there are two closed convex sets $\Omega \subset \subset \Omega'$ in $\mathbb{R}^2$ such that
 $\cup_{l \in (l_1,l_2)} \Gamma_l =  \mathrm{int}\, \Omega'\setminus \mathrm{cl}\, \Omega $  and $X:(l_1,l_2) \times \mathbb{S}^1 \to  \mathrm{int}\, \Omega'\setminus \mathrm{cl}\, \Omega$ is a bijection.  For each $x \in \mathrm{int}\, \Omega'\setminus \mathrm{cl}\, \Omega $ there is a unique value $u(x)\ge l_0$ such that $x\in \Gamma_{u(x)}$. Next, we prove $X$ is a $C^1$-diffeomorphism and $u$ is $C^1$-function. Observe $X_l = S_l e^{i\theta}+S_{\theta l }i e^{i\theta}$ and $X_\theta = (S+S_{\theta\theta})ie^{i\theta}$. From this, we infer that the Jacobian determinant of $X=(x_1,x_2)$ is
 \[\frac {\partial {(x_1,x_2)}}{\partial (l,\theta) }= S_l (S+S_{\theta\theta}) ,\]
which is non-zero  everywhere by the assumption. By the inverse function theorem,  $X$ is a local $C^1$-diffeomorphism and this upgrades to a global one as $X$ is a bijection. Note $u$ is also $C^1$ as $u(x_1,x_2)= l(x_1,x_2)$, the first component of $X^{-1}$.

  	By the construction, $u$ is a $C^1$-function on $\mathrm{int}\, \Omega'\setminus \mathrm{cl}\, \Omega$ and the support function at the level $\{u=l\}$,  is equal to the given function $\theta \mapsto S(l,\theta)$. Under this assumption (although a priori $u$ is not smooth and strictly convex), we may exactly follow the argument in the proof Proposition \ref{prop-supportfunction} upto \eqref{eq-gradf} and conclude $Du = S_l^{-1} e^{i\theta}$. Since $S_l(l,\theta)$ and $X^{-1}(x_1,x_2)=(l,\theta)$ are both $C^1$, $Du$ has continuous partials and thus $u$ is of class $C^2$. Then we  follow the rest of argument from \eqref{eq-hessf} to \eqref{eq:Hessian_connection} and conclude that $u$ is strictly convex. Note $S_{ll}<0$ is used to conclude \eqref{eq:Hessian_connection} is positive definite. 
 \end{proof}

 \end{proposition}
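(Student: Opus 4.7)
The plan is to view the data $S(l,\theta)$ as a one-parameter family of support functions and show that the corresponding family of plane curves foliates an open annular region, then read off $u$ as the foliation parameter. For each fixed $l\in(l_1,l_2)$, the condition $S+S_{\theta\theta}>0$ says that $\theta\mapsto S(l,\theta)$ is the support function of a smooth, strictly convex closed curve $\Gamma_l$, explicitly parametrized by
\[
X(l,\theta) = S(l,\theta)\,e^{i\theta} + S_\theta(l,\theta)\,ie^{i\theta},
\]
with outward unit normal $e^{i\theta}$. The condition $S_l>0$ expresses that, as $l$ grows, the supporting half-plane $\{y:\langle y,e^{i\theta}\rangle\le S(l,\theta)\}$ shrinks in each direction $\theta$, so the convex hull of $\Gamma_l$ strictly increases with $l$. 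Define
\[
\Omega := \bigcap_{l\in(l_1,l_2)} \operatorname{conv}(\Gamma_l), \qquad
\Omega' := \overline{\bigcup_{l\in(l_1,l_2)} \operatorname{conv}(\Gamma_l)},
\]
both convex, with $\Omega\subset\subset\Omega'$ and $\operatorname{int}\Omega' \setminus \operatorname{cl}\Omega = \bigcup_l \Gamma_l$.

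Next, I would show that $X:(l_1,l_2)\times\mathbb{S}^1 \to \operatorname{int}\Omega'\setminus \operatorname{cl}\Omega$ is a $C^1$-diffeomorphism. A direct computation gives $X_\theta = (S+S_{\theta\theta})\,ie^{i\theta}$ and $X_l = S_l\,e^{i\theta} + S_{l\theta}\,ie^{i\theta}$, so the Jacobian determinant equals $S_l(S+S_{\theta\theta})>0$ on the whole domain. The inverse function theorem then gives local invertibility; globally, $X$ is a bijection because the images $\Gamma_l$ are pairwise disjoint (by strict monotonicity in $l$) and each $\Gamma_l$ is embedded (by strict convexity). Consequently the inverse $X^{-1}(x)=(l(x),\theta(x))$ is $C^1$, and I define $u(x):=l(x)$, a $C^1$ function on the annular region.

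To see $u$ is $C^2$ and compute its derivatives, I would differentiate $u(X(l,\theta))=l$ in both $\theta$ and $l$. The $\theta$-equation $\langle Du,X_\theta\rangle = 0$ combined with $X_\theta\parallel ie^{i\theta}$ forces $Du$ parallel to $e^{i\theta}$, and the $l$-equation $\langle Du,X_l\rangle=1$ then yields $Du(X(l,\theta))=S_l^{-1}\,e^{i\theta}$. Because $S_l$ and $\theta(x)$ are $C^1$, $u$ is $C^2$. For the Hessian I would pull back by the chart $(l,\theta)$ and use the identity $\overline{\nabla}_i\overline{\nabla}_j u = \langle x_i, D^2 u\cdot x_j\rangle$ to deduce that $D^2u$ is similar (by a positive scalar and a congruence by the metric) to the diagonal matrix $\operatorname{diag}(-S_{ll}/S_l,\,(S+S_{\theta\theta})/S_l)$. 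The two diagonal entries are positive exactly by the hypotheses $S_{ll}<0$, $S_l>0$, and $S+S_{\theta\theta}>0$, giving strict convexity of $u$ at every point. Finally, since the level set $\{u=l\}=\Gamma_l$ has support function $S(l,\cdot)$ by construction, $S$ is the level-set support function of the graph of $u$ in the sense of \eqref{eq-supportfunction}.

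The main technical obstacle is the bookkeeping in Step 3: identifying the bijective image of $X$ with a genuine annular region $\operatorname{int}\Omega'\setminus\operatorname{cl}\Omega$ bounded by convex sets, since a priori the sets $\operatorname{conv}(\Gamma_l)$ need not nest in an obvious way from pointwise data on $\mathbb{S}^1$. The key fact to use is that nesting of convex hulls is equivalent to pointwise comparison of support functions, so $S_l>0$ really does provide the monotone foliation. Everything else is a computation via the frame $\{e^{i\theta},ie^{i\theta}\}$ and the inverse function theorem; in particular the role of $S_{ll}<0$ is isolated to verifying the $(1,1)$-entry of the diagonalized Hessian.
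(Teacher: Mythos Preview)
Your proposal is correct and follows essentially the same route as the paper: parametrize the level curves by $X(l,\theta)=S e^{i\theta}+S_\theta ie^{i\theta}$, use the Jacobian $S_l(S+S_{\theta\theta})>0$ and the inverse function theorem to make $X$ a diffeomorphism, define $u$ as the $l$-component of $X^{-1}$, recover $Du=S_l^{-1}e^{i\theta}$ by differentiating $u\circ X=l$, and read off strict convexity from the diagonal form $\operatorname{diag}(-S_{ll}/S_l,(S+S_{\theta\theta})/S_l)$ of the Hessian in the $(l,\theta)$ frame. The only cosmetic differences are that you write down $\Omega,\Omega'$ explicitly as the intersection and closure of the union of the convex hulls, and that your remark about the supporting half-plane ``shrinking'' is phrased backwards (it enlarges since $S_l>0$), though your conclusion that the convex hulls increase is correct.
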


\section{Interior regularity estimate}

In this section, we establish interior estimates. For readers' convenience, we recall the abbreviated notation of norms in Definition \ref{def:norm.abbr} that
\begin{align}
&\|f\|_{C^{k,\beta}_{s,a}}:=\|f\|_{C^{k,\beta}([s-a,s+a]\times\mathbb{S}^1)}, && \|f\|_{L^{2}_{s,a}}:=\|f\|_{L^{2}([s-a,s+a]\times\mathbb{S}^1)}.
\end{align}
Also, we use the abbreviations that $S_{\lambda ,l}=\frac{\partial}{\partial l} S_\lambda , S_{\lambda ,ll}=\frac{\partial^2}{\partial l^2} S_\lambda  ,S_{\lambda ,\theta\theta}:=\frac{\partial^2}{\partial \theta^2} S_\lambda $ for  functions $S_\lambda(l,\theta)$ parametrized by $\lambda \in [0,1]$. 

\begin{lemma}\label{lem-B1} Given a solution $h\in C^\infty(\mathbb{S}^1)$ to \eqref{eq-shrinker}, $\alpha\in(0,1/4)$, $\eta \in [0,1]$, $k\geq 0$, and $\beta\in (0,1)$ , there are large $s_0$, $C$ and small $\varepsilon_0>0$ with the following significance.  For each $i \in \{0,1\}$, suppose $S_i(l,\theta) := \frac{l^{1-2\alpha}}{1-2\alpha} h(\theta) + w_i(s,\theta)$ with $s=\ln l$ and $w_i(s,\theta)\in C^\infty([s_0-2,+\infty)\times \mathbb{S}^1)$ satisfies $\| w_i \|_{C^{k,\beta}_{s,2}}\leq \varepsilon_0 e^{(1-2\alpha)s}$ for $s \geq s_0$ and
\begin{align}\label{eq-B1}
&S_i+S_{i,\theta\theta}+(\eta +S_{i,l}^{-2})^{\frac{1}{2\alpha}-2}S_{i,l}^{-4}S_{i,ll} =0, && S_{i,l}>0,
\end{align}
for $
\ln l \geq s_0-2$. Then the following estimates hold: for $s\geq s_0$, 
\begin{align}
\|w_1\|_{C^{k,\beta}_{s,1}} &\leq C (\| w_1\|_{L^2_{s,2}} + \eta e^{(1-2\alpha)s }e^{-4\alpha s}), \label{eq-B2}\\ 
\| w_1-w_0\|_{C^{k,\beta }_{s,1}} &\leq C \|w_1-w_0\|_{L^2_{s,2}}.\label{eq-B3} 
\end{align}
\end{lemma}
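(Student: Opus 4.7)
The plan is to treat both estimates as perturbative consequences of standard interior elliptic regularity for the linearized operator $\hat L w = w_{ss} + w_s + h^{1/\alpha}(w_{\theta\theta} + w)$. Since $h \in C^\infty(\mathbb{S}^1)$ is bounded away from zero, $\hat L$ is uniformly elliptic on the $(s,\theta)$-cylinders with smooth, $s$-independent coefficients. By Lemma \ref{lem-w-eq}, each $w_i$ satisfies
\[
\hat L w_i = -E_1(w_i) - \eta E_2(w_i), \qquad i \in \{0,1\}.
\]
A routine extension of Lemma \ref{lem-error-estimate} to $C^{k,\beta}$ norms (obtained by differentiating the explicit expressions for $E_1, E_2$ in $w, Dw, D^2 w$), together with the smallness hypothesis $\|w_i\|_{C^{k,\beta}_{s,2}} \le \varepsilon_0 e^{(1-2\alpha) s}$, then gives
\[
\|E_1(w_i) + \eta E_2(w_i)\|_{C^{\max\{k-2, 0\},\beta}_{s,\rho'}} \le C \varepsilon_0 \|w_i\|_{C^{k,\beta}_{s,\rho'}} + C \eta e^{(1-2\alpha) s} e^{-4\alpha s}
\]
for any $\rho' \le 2$, where one factor of $\varepsilon_0$ is extracted from each quadratic contribution.

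First I would establish the $s$-independent linear interior estimate: for any $0 < \rho < \rho' \le 2$ and any $k \ge 0$,
\[
\|v\|_{C^{k,\beta}_{s,\rho}} \le C_{\rho,\rho',k,\beta} \bigl( \|\hat L v\|_{C^{\max\{k-2,0\},\beta}_{s,\rho'}} + \|v\|_{L^2_{s,\rho'}} \bigr).
\]
This follows by combining interior $W^{2,p}$-estimates (valid for any $p < \infty$ since the coefficients are smooth) with the Sobolev embedding $W^{2,p} \hookrightarrow C^0$ in the two-dimensional cylinder (for $p > 1$), and then iterating Schauder's $C^{k,\beta}$-estimate on slightly shrunk cylinders to upgrade to higher regularity. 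The $s$-invariance of the coefficients makes the constant $C_{\rho,\rho',k,\beta}$ independent of $s$.

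Substituting the error bound into the linear estimate applied to $v = w_1$ yields
\[
\|w_1\|_{C^{k,\beta}_{s,\rho}} \le C\varepsilon_0 \|w_1\|_{C^{k,\beta}_{s,\rho'}} + C \eta e^{(1-2\alpha)s - 4\alpha s} + C\|w_1\|_{L^2_{s,\rho'}}.
\]
A standard iteration over a sequence $1 = \rho_0 < \rho_1 < \cdots < \rho_N = 2$, choosing $\varepsilon_0$ so small that the self-referential term $C\varepsilon_0\|w_1\|_{C^{k,\beta}_{s,\rho'}}$ absorbs into the left-hand side, proves \eqref{eq-B2}. For \eqref{eq-B3}, I would set $v = w_1 - w_0$ and use the integral representation together with the difference estimate \eqref{eq-est-error-diff1} of Lemma \ref{lem-error-estimate} (at the $C^{k,\beta}$ level) to obtain $\|\hat L v\|_{C^{\max\{k-2,0\},\beta}_{s,\rho'}} \le C \varepsilon_0 \|v\|_{C^{k,\beta}_{s,\rho'}}$; the same iteration (with no inhomogeneous $\eta$ term) then produces \eqref{eq-B3}.

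The main technical point is the upgrade from a $C^0$-norm to an $L^2$-norm on the right-hand side of the basic Schauder estimate; this is routine in the two-dimensional setting via $W^{2,p}$-Schauder followed by Sobolev embedding, but must be done so that the constants remain independent of $s$, which they do precisely because $\hat L$ has $s$-invariant coefficients. Once this is in place, the rest is a bootstrap/absorption, with $\varepsilon_0$ fixed so that $C\varepsilon_0$ is less than the contraction constant of the iteration and $s_0$ chosen large enough that the a priori smallness holds on $[s_0 - 2, \infty)$.
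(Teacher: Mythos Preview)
Your strategy of writing $\hat L w_i = -E_1(w_i)-\eta E_2(w_i)$ and invoking interior Schauder for the fixed operator $\hat L$ is natural, but the absorption step as you describe it does not close. After iterating over a finite chain $1=\rho_0<\cdots<\rho_N=2$ you are left with a residual $\bigl(\prod_j C_{\rho_j,\rho_{j+1}}\bigr)\varepsilon_0^{N}\,\|w_1\|_{C^{k,\beta}_{s,2}}$, and the hypothesis only tells you $\|w_1\|_{C^{k,\beta}_{s,2}}\le \varepsilon_0 e^{(1-2\alpha)s}$. No choice of $N$ or $\varepsilon_0$ converts this into something controlled by $\|w_1\|_{L^2_{s,2}}+\eta e^{(1-6\alpha)s}$; the factor $e^{(1-2\alpha)s}$ survives. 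The same issue arises for \eqref{eq-B3}. The standard Giaquinta--Simon absorption lemma does not rescue this because the coefficient $C_{\rho,\rho'}\varepsilon_0$ in front of the self-referential term is not a fixed number less than one: the Schauder constant $C_{\rho,\rho'}$ blows up as $\rho'-\rho\to 0$.

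The paper sidesteps the absorption entirely by not placing the top-order part of the error on the right-hand side. It writes the equation for $W=S_1-S_0$ (via the mean-value formula) as a \emph{homogeneous} linear equation $W+W_{\theta\theta}+\hat A_\eta W_{ss}+\hat B_\eta W_s=0$ whose variable coefficients $\hat A_\eta,\hat B_\eta$ are, under the smallness assumption and $s_0$ large, uniformly close to $h^{-1/\alpha}$ in $C^{k-2,\beta}$ on each unit cylinder. Then interior $W^{2,2}$ plus Schauder give \eqref{eq-B3} directly. For \eqref{eq-B2} one takes $S_0=\sigma^{-1}l^{\sigma}h$ (a solution of the $\eta=0$ equation), so $W=w_1$ satisfies the same linear equation with a source $f$ of size $O(\eta e^{(1-6\alpha)s})$. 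Your approach becomes equivalent to this once you observe from the explicit formula in Lemma~\ref{lem-w-eq} that $E_1(w)$ and $E_2(w)$ are \emph{linear} in $w_{\theta\theta}$ with coefficients depending only on $w_s$ and bounded by $C\varepsilon_0$; those terms should be moved to the left as small perturbations of the principal part, not iterated on the right.
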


\begin{proof}
First, we prove \eqref{eq-B3}. The difference $W(l,\theta):=S_1(l,\theta)-S_0(l,\theta)=w_1(\ln l,\theta)-w_0(\ln l,\theta)$ solves the linear equation $ W +W_{\theta\theta}+A_\eta W_{ll} +B_\eta W_l =0$, where the coefficients $A_\eta(l,\theta)$ and $B_\eta(l,\theta)$ are given by
	\begin{align}
	A_\eta&:=   (\eta +  S_{1,l}^{-2})^{\frac1{2\alpha}-2} S_{1,l}^{-4} ,\\
	 B_\eta&:= S_{0,ll}\int_0^1 \left[(4-\alpha^{-1} ) (\eta +  S_{\lambda,l}^{-2} )^{\frac1{2\alpha}-3} S_{\lambda,l}^{-7} -4(\eta +  S_{\lambda,l}^{-2} )^{\frac1{2\alpha}-2} S_{\lambda,l} ^{-5}\right ] d\lambda,
\end{align}
with  $S_{\lambda }(l,\theta):=\lambda  S_1(l,\theta)+  (1-\lambda)S_0(l,\theta)$ for $\lambda \in [0,1]$. Namely, $W(l,\theta)=W(e^s,\theta)$ satisfies 
\begin{equation}
 \label{eq-b5} W +W_{\theta\theta}+\hat A_\eta W_{ss} +\hat B_\eta W_s =0,
\end{equation}	 
where $\hat A_\eta := e^{-2s}A_\eta $ and $\hat B_\eta:= e^{-s}B_\eta -e^{-2s}A_\eta$. Since
\begin{align}
S_{\lambda,l}&=l^{-2\alpha}h+l^{-1}(\lambda \partial_s w_1+(1-\lambda)\partial_s w_0),\\
S_{\lambda,ll}&=-2\alpha l^{-1-2\alpha}h-l^{-2}(\lambda \partial_s w_1+(1-\lambda)\partial_s w_0)+l^{-2}(\lambda \partial_s^2 w_1+(1-\lambda)\partial_s^2 w_0),
\end{align}
assuming $\varepsilon_0\ll h$ we have
\begin{align}\label{eq:S_derivative_est}
&\|e^{-2\alpha s} S_{\lambda,1}^{-1}- h^{-1}\|_{C^{k-1,\beta}_{s,2}}\leq C\varepsilon_0  , &&\|e^{(2\alpha+1)s}S_{\lambda,ll}+ h\|_{C^{k-2,\beta}_{s,2}}\leq C\varepsilon_0,
\end{align}
for some constant $C=C(\alpha,h,k,\beta)$. Hence, for any small $\delta>0$, there are large $s_0$ and small $\varepsilon_0$ such that
\begin{align}
 \|\hat  A_\eta -h^{-\frac1\alpha}\|_{C^{k-1,\beta}_{s,2}} + \|\hat  B_\eta -  h^{-\frac1\alpha}\|_{C^{k-2,\beta}_{s,2}} \leq \delta
\end{align}
hold for $s\geq s_0$. Now, we apply the interior $W^{2,p}$ estimates with $p=2$ and the interior Schauder estimates to the linear elliptic equation \eqref{eq-b5}. See Theorem 9.11 and  Theorem 6.2 in \cite{gilbarg1977elliptic}. Then, \eqref{eq-B3} follows from applying the interior Schauder estimates for the derivatives of $W$.
  	  
\bigskip

To prove \eqref{eq-B2}, we may fix $S_0=(1-2\alpha)^{-1}l^{1-2\alpha}h(\theta)$ to have $W=w_1$. Since $S_0$ satisfies \eqref{eq-B1} with $\eta=0$,
\begin{align}
&w_1+w_{1,\theta\theta}+\hat A_0w_{1,ss}+\hat B_0w_{1,s}=f, && \text{where}\;f:= S_{1,l}^{-\frac{1}{\alpha} }S_{1,ll}\left[1-(\eta S_{1,l}^{2}+1)^{\frac{1}{2\alpha}-2}\right].
\end{align}
Since \eqref{eq:S_derivative_est} implies  $S_{1,l}^{-1} \approx h^{-1}e^{2\alpha s} \gg 1 \geq \eta$, observing $\|e^{s}S_{\lambda,l}- e^{(1-2\alpha)s} h\|_{C^{k-1,\beta}_{s,2}}\leq \varepsilon_0e^{(1-2\alpha)s}$, we have
\begin{equation}
\|f\|_{C^{k-2,\beta}_{s,2}}\leq C\eta e^{ (1-6\alpha)s}.
\end{equation}
Thus, we can apply the interior estimates for $w_1$ and its derivatives as above so that we obtain \eqref{eq-B2}.   
\end{proof}

\subsection*{Acknowledgments}
BC thanks KIAS for the visiting support and also thanks University of Toronto where the research was initiated. BC and KC were supported by  the National Research Foundation(NRF) grant funded by the Korea government(MSIT) (RS-2023-00219980). BC has been supported by NRF of Korea grant No. 2022R1C1C1013511, POSTECH Basic Science Research Institute grant No. 2021R1A6A1A10042944, and a POSCO Science Fellowship. KC has been supported by the KIAS Individual Grant MG078902, a POSCO Science Fellowship, and an Asian Young Scientist Fellowship. SK has been partially supported by NRF of Korea grant No. 2021R1C1C1008184. 

We are grateful to Liming Sun for fruitful discussion. We would like to thank Simon Brendle, Panagiota Daskalopoulos, Pei-Ken Hung, Ki-Ahm Lee, Christos Mantoulidis, Connor Mooney, Ovidiu Savin for their interest and insightful comments.

 \bibliography{GCF-ref.bib}
\bibliographystyle{abbrv}

\end{document}